\lstdefinelanguage{GAP}{%
  morekeywords={%
    Assert,Info,IsBound,QUIT,%
    TryNextMethod,Unbind,and,break,%
    continue,do,elif,%
    else,end,false,fi,for,%
    function,if,in,local,%
    mod,not,od,or,%
    quit,rec,repeat,return,%
    then,true,until,while%
  },%
  sensitive,%
  morecomment=[l]\#,%
  morestring=[b]",%
  morestring=[b]',%
}[keywords,comments,strings]
\newtheorem{thm}{Theorem}[section]
\newtheorem{cor}[thm]{Corollary}
\newtheorem{lem}[thm]{Lemma}
\newtheorem{prop}[thm]{Proposition}
\newtheorem{opm}[thm]{Open Problem}
\newcommand{\abs}[1]{\left\vert#1\right\vert}
\newcommand{\C}{${\mathcal L}$-representation}
\newcommand{\D}{minimal non-nilpotent}
\newcommand{\M}{NMN_{4}}
\def\pv#1{\ensuremath{\mathsf{#1}}}
\newcommand{\St}{\mathop{\mathrm{St}}\nolimits}
\begin{document}
\title{The rank of variants of nilpotent pseudovarieties}
\author{J. Almeida and M. H. Shahzamanian}
\address{J. Almeida and M. H. Shahzamanian\\ Centro de Matemática e Departamento de Matemática, Faculdade de Ciências,
Universidade do Porto, Rua do Campo Alegre, 687, 4169-007 Porto,
Portugal}
\email{jalmeida@fc.up.pt; m.h.shahzamanian@fc.up.pt}
\subjclass[2020]{Primary 20M07. Scondary 20M35}
\keywords{Profinite semigroup, monoid, block group,
  pseudovariety, Mal'cev nilpotent semigroup.}

\begin{abstract}
  We investigate the rank of pseudovarieties defined by
  several of the variants of nilpotency conditions for semigroups in
  the sense of Mal'cev. For several of them, we provide finite bases
  of pseudoidentities. We also show that the Neumann--Taylor variant
  does not have finite rank.
%We show that the pseudovariety $\mathsf{NT}$ has infinite rank and, therefore, it is non-finitely based. 
%We calculate the ranks of these pseudovarieties %$\mathsf{MN}$, $\mathsf{\M}$, $\mathsf{PE}$, $\mathsf{TM}$ and $\mathsf{EUNNG}$. They are respectively  $4,3,2,2,$ and $2$. 
%and give a complete comparison diagram of them.
%We introduce the pseudovariety $\mathsf{\M}$. A finite semigroup $S$ is in $\mathsf{\M}$ if $S\in\mathsf{BG_{nil}}$, $F_7\not\prec S$ and $F_{12}\not\prec S$.  It is strictly contained in the pseudovariety $\mathsf{PE}$ and strictly includes the pseudovariety $\mathsf{NT}$. On the %other hand, $S\in\mathsf{PE}$ if $S\in\mathsf{BG_{nil}}$ and $F_7\not\prec S$. We prove that $F_7$ is $\times$-prime.
\end{abstract}
\maketitle

%%%%%%%%%%%%%%%%%%%%%%%%%%%%%%%%%%%%%%%%%%%%%%%%%%%%%%%%%%%%%%%%%%%%%%%%%%%%%%%%%%%%%%%%%%%%%%%%%%%%%%%%%%%%%%%%%%%%%%%%%%%%%%%%%%%%%%%%%%%%%%%%%%%%%%%%%%%%%%%%%%%%%%%%%%%%%%%%%%%%%%%%%%%%%%%%%%%%%%%%%%%%%%%%%%%%%%%%%%%%%%%%%%%%%%%%%%%%%%%%%%%%%%%%%%%%%%%%%%%%%%%%%%%%%%%%%%%%%%%%%%%%
%%%%%%%%%%%%%%%%%%%%%%%%%%%%%%%%%%%%%%%%%%%%%%%%%%%%%%%%%%%%%%%%%%%%%%%%%%%%%%%%%%%%%%%%%%%%%%%%%%%%%%%%%%%%%%%%%%%%%%%%%%%%%%%%%%%%%%%%%%%%%%

\section{Introduction}\label{pre}

A pseudovariety of semigroups is a class of finite semigroups closed under taking subsemigroups, homomorphic images, and finite direct products. 
By a theorem of Reiterman~\cite{Rei}, we may define a pseudovariety of semigroups by a set of pseudoidentities, which is called a basis. We say that a pseudovariety of semigroups is finitely based if it admits a finite basis of pseudoidentities. 

Let $\mathsf{V}$ be a semigroup pseudovariety.
We say that a semigroup $S$ is $n$-generated if there is some subset of~$S$ with at most $n$~elements that generates~$S$.
We say that $\mathsf{V}$ has rank at most $n$ if, whenever a semigroup $S$ is such that all its $n$-generated subsemigroups belong to~$\mathsf{V}$, so does $S$.
If there is such an integer $n\ge0$, then $\mathsf{V}$ is said to have finite rank and the minimum value of $n$ for which $\mathsf{V}$ has
rank at most $n$ is called the rank of~$\mathsf{V}$ and is denoted by
$\mbox{rk}(\mathsf{V})$. If there is no such $n$, then $\mathsf{V}$ is
said to have infinite rank.
The rank of $\mathsf{V}$ is less than or equal to the number of
variables involved in the ultimate description of $\mathsf{V}$ by
sequences of identities (in the sense of Eilenberg and
Sch{\"u}tzenberger \cite{Eil-Sch}) or in the description by pseudoidentities. 
Hence, a semigroup pseudovariety with infinite rank is non-finitely based. 
The class $\mathsf{V}^{(n)}$ is defined as the collection of all finite semigroups $S$ such that every $n$-generated subsemigroup of $S$ belongs to $\mathsf{V}$.  Note that the class $\mathsf{V}^{(n)}$ is a semigroup pseudovariety and if $\mathsf{V} =\mathsf{V}^{(n)}$ for some $n$ then $\mbox{rk}(\mathsf{V})\leq n$.

Mal'cev \cite{Mal} and independently Neumann and Taylor \cite{Neu-Tay} have shown that nilpotent groups may be defined by semigroup identities (that is, without using inverses). This leads to the notion of a nilpotent semigroup (in the sense of Mal'cev). 

For a semigroup $S$ and elements $x,y,z_{1},z_{2}, \ldots$ one recursively defines two sequences 
$$\lambda_n=\lambda_{n}(x,y,z_{1},\ldots, z_{n})\quad{\rm and} \quad \rho_n=\rho_{n}(x,y,z_{1},\ldots, z_{n})$$ by
$$\lambda_{0}=x, \quad \rho_{0}=y$$ and
$$\lambda_{n+1}=\lambda_{n} z_{n+1} \rho_{n}, \quad \rho_{n+1}=\rho_{n} z_{n+1} \lambda_{n}.$$
We use the notation $S^1$ where if $S$ has an identity element then $S^1=S$, otherwise $S^{1}=S\cup \{1\}$ is obtained by adding an identity element to $S$.
A semigroup is said to be \emph{nilpotent}\footnote{Mal'cev's original
  paper \cite{Mal} defines nilpotent semigroups in terms of a weaker
  property since Mal'cev does not allow $c_1,\ldots,c_n$ to take the
  value $1$ if $S\neq S^{1}$. The definition used here comes from
  \cite{Lal}. By \cite[Lemma 3.2]{Alm-Kuf-Sha}, the two definitions
  agree on the class of finite semigroups.}
(MN) if %there exists a positive integer $n$ such that 
\begin{displaymath}
  \exists n\ge0\ \forall a,b\in S\ \forall c_{1}, \ldots , c_{n}\in S^{1},\ 
  \lambda_{n}(a,b,c_{1},\ldots, c_{n}) = \rho_{n}(a,b,c_{1},\ldots,c_{n}).
\end{displaymath}

Recall that a semigroup $S$ is said to be \emph{Neumann--Taylor}
\cite{Neu-Tay} (NT)  if %, for some $n\geq 2$,
\begin{displaymath}
  \exists n\ge2\ \forall a,b \in S\ \forall c_2,\ldots,c_n \in S^1,\  
  \lambda_n(a,b,1,c_2,\ldots,c_n)=\rho_n(a,b,1,c_2,\ldots,c_n)
\end{displaymath}
A semigroup $S$ is said to be \emph{positively Engel} (PE) if %, for  some $n\geq 2$,
\begin{align*}
  &\exists n\ge2\ \forall a,b \in S\ \forall c\in S^{1},\\
  &\quad
    \lambda_{n}(a,b,1,1,c,c^{2},\ldots ,c^{n-2})
    =\rho_{n} (a,b,1,1,c,c^{2},\ldots ,c^{n-2}),
\end{align*}
while $S$ is said to be \emph{Thue--Morse} (TM) if %for some $n$,
\begin{displaymath}
  \exists n\ge0\ \forall a,b \in S,\ 
  \lambda_{n}(a,b,1,1,\ldots,1)=\rho_{n}(a,b,1,1,\ldots,1).
\end{displaymath}

Each of the above classes of finite semigroups (nilpotent, Neumann--Taylor, positively Engel, Thue--Morse) constitutes a pseudovariety. Actually, the above descriptions are examples of ultimate equational definitions of pseudovarieties in the sense of Eilenberg and Sch{\"u}tzenberger \cite{Eil-Sch}. We denote them respectively by $\mathsf{MN}$, $\mathsf{NT}$, $\mathsf{PE}$ and $\mathsf{TM}$. 
 
In this paper, we investigate the rank of pseudovarieties defined by Mal’cev nilpotency conditions. In particular, we show that the pseudovariety $\mathsf{NT}$ has infinite rank and, therefore, it is non-finitely based. 

We denote by $\mathsf{G_{nil}}$ the pseudovariety of all finite
nilpotent groups and by $\mathsf{\overline{G}_{nil}}$ the
pseudovariety of all finite semigroups whose subgroups belong to
$\mathsf{G_{nil}}$. The pseudovariety $\mathsf{BG}$ is the collection
of all finite block groups, that is, finite semigroups in which each
element has at most one inverse and the pseudovariety
$\mathsf{BG_{nil}}$ is the collection of all finite block groups whose
subgroups are nilpotent. Every finite semigroup that is not nilpotent
has a subsemigroup that is minimal for not being nilpotent, in the
sense that every proper subsemigroup and every Rees factor semigroup
is nilpotent. Semigroups with this minimality condition have been
described in \cite{Jes-Sha3} where they are called \D\ semigroups. It
was shown that a \D\ semigroup in the pseudovariety
$\mathsf{\overline{G}_{nil}}$ is of one of four types of semigroups.
These four types of semigroups are a right or left zero semigroup or
the union of a completely $0$-simple inverse ideal with a
$2$-generated subsemigroup. A minimal non-nilpotent semigroup $S$
which is not a group or a right or left zero semigroup has a
completely $0$-simple inverse ideal $M$ and $S$ acts on the ${\mathcal
  R}$-classes of $M$ where the different types of orbits of this
action determine the type of $S$. The pseudovariety
$\mathsf{BG_{nil}}$ does not contain any semigroup of the first type
but may have semigroups of other types, the pseudovariety
$\mathsf{PE}$ does not contain any semigroup of the first and second
types but may have semigroups of other types and the pseudovariety
$\mathsf{MN}$ does not contain any semigroup of any types. In this
paper, we define a pseudovariety by excluding non-nilpotent groups and
the first three types of minimal non-nilpotent semigroups, while
allowing the fourth type. We name this pseudovariety $\mathsf{\M}$.
Thus, it is a class of semigroups that is just one step away from
being $\mathsf{MN}$ while being contained in the pseudovariety
$\mathsf{PE}$; it has deserved our attention and we dedicate a
section to it.
% It may be viewed as a close generalization of nilpotency and, thus,
% studying it serves as a further contribution to what makes a
% semigroup nilpotent.
It turns out that $\mathsf{\M}$ sits strictly between the
pseudovarieties $\mathsf{NT}$ and $\mathsf{PE}$.

We calculate the ranks of the pseudovarieties $\mathsf{MN}$,
$\mathsf{\M}$, $\mathsf{PE}$ and $\mathsf{TM}$. They are respectively
$4$, $3$, $2$ and $2$.

Let $\mathcal{C}$ be a class of finite semigroups. The pseudovariety
generated by $\mathcal{C}$ is denoted by $\langle \mathcal{C}
\rangle$. Let $\mathsf{Inv}$ be the class of all finite inverse
semigroups and $\mathsf{A}$ be the pseudovariety of all finite aperiodic
semigroups. At the end of the paper, we compare the pseudovarieties
$\langle\mathsf{Inv}\cap\mathsf{A}\rangle$ and
$\langle\mathsf{Inv}\rangle\cap \mathsf{A}$
% ($\langle\mathsf{Inv}\cap\mathsf{A}\rangle\subsetneqq
% \langle\mathsf{Inv}\rangle\cap\mathsf{A}$ \cite{Hig-Mar}) Higgins
% and Margolis \cite{Hig-Mar} showed that the pseudovariety generated
% by the finite aperiodic inverse semigroups is strictly contained in
% the pseudovariety of all aperiodic semigroups with commuting
% idempotents
with the pseudovariety $\mathsf{MN}\cap \mathsf{A}$. We show that the
pseudovariety $\langle\mathsf{Inv}\cap\mathsf{A}\rangle$ is strictly
contained in $\mathsf{MN}\cap\mathsf{A}$ and the pseudovarieties
$\langle\mathsf{Inv}\rangle\cap\mathsf{A}$ and $\mathsf{MN}\cap
\mathsf{A}$ are incomparable. We further compare the pseudovarieties
$\mathsf{MN}$, $\mathsf{NT}$, $\mathsf{\M}$, $\mathsf{PE}$,
$\mathsf{TM}$, $\mathsf{MN}^{(2)}$, and $\mathsf{MN}^{(3)}$. The
diagrams at Figure~\ref{fig1} represent the strict inclusion
relationships and equalities between these pseudovarieties
respectively in the general case and in the aperiodic case.

\begin{figure}\label{fig1}
    \centering
\psscalebox{1.0 1.0} % Change this value to rescale the drawing.
{
\begin{pspicture}(0,-3.4185936)(25.18,3.4185936)
  \rput[bl](7.05,2.5005078){$\mathsf{PE}\cap\mathsf{A}%=\mathsf{TM}\cap\mathsf{A}
    =\mathsf{BG}\cap\mathsf{A}$}
\rput[bl](5.26,0.3005078){$\mathsf{MN}^{(3)}\cap\mathsf{A}$}
\rput[bl](5.26,1.3805078){$\mathsf{MN}^{(2)}\cap\mathsf{A}$}
\rput[bl](9.56,0.3005078){$\mathsf{NT}\cap\mathsf{A}$}
\rput[bl](9.56,1.3805078){$\mathsf{\M}\cap\mathsf{A}$}
\rput[bl](7.56,-0.8994922){$\mathsf{MN}\cap\mathsf{A}$}
\psline[linecolor=black, linewidth=0.04](10.06,1.3005078)(10.06,0.7005078)
\psline[linecolor=black, linewidth=0.04](8.46,2.400508)(9.96,1.8005078)
\psline[linecolor=black, linewidth=0.04](9.96,0.20050782)(8.46,-0.4994922)
\rput[bl](7.56,-1.7994922){$\langle\mathsf{Inv}\cap\mathsf{A}\rangle$}
\psline[linecolor=black, linewidth=0.04](8.36,-0.9594922)(8.36,-1.4194922)
\psline[linecolor=black, linewidth=0.04](8.26,2.400508)(6.7,1.8005078)
\psline[linecolor=black, linewidth=0.04](6.7,0.20050782)(8.26,-0.4994922)
\psline[linecolor=black, linewidth=0.04](6.3,1.3005078)(6.3,0.7005078)
\psline[linecolor=black, linewidth=0.04](9.5,1.5)(7,0.5)
\rput[bl](0.4,-1.0794922){$\mathsf{MN}^{(2)}$}
\rput[bl](0.55,-2.1394923){$\mathsf{MN}^{(3)}$}
\rput[bl](4.2,-2.1394923){$\mathsf{NT}$}
\rput[bl](3.9,-1.0794922){$\mathsf{\M}$}
\rput[bl](2.5,-3.339492){$\mathsf{MN}$}
\psline[linecolor=black, linewidth=0.04](4.5,-1.1394922)(4.5,-1.7394922)
\psline[linecolor=black, linewidth=0.04](2.9,-0.039492186)(4.38,-0.6694922)
\psline[linecolor=black, linewidth=0.04](4.4,-2.2394922)(2.9,-3)
\psline[linecolor=black, linewidth=0.04](2.7,-0.039492186)(1.4,-.74)
\psline[linecolor=black, linewidth=0.04](1.2,-2.2)(2.7,-3)
\psline[linecolor=black, linewidth=0.04](.8,-1.2)(.8,-1.8)
\psline[linecolor=black, linewidth=0.04](1.7,-2)(3.8,-.9)
\rput[bl](2.5,0.16050781){$\mathsf{PE}$}
\rput[bl](2.5,3.1405077){$\mathsf{BG}$}
\rput[bl](2.52,2.160508){$\mathsf{TM}$}
\rput[bl](2.52,1.1605078){$\mathsf{BG_{nil}}$}
\psline[linecolor=black, linewidth=0.04](2.8,1.0605078)(2.8,0.5505078)
\psline[linecolor=black, linewidth=0.04](2.8,3.0605078)(2.8,2.5605078)
\psline[linecolor=black, linewidth=0.04](2.8,2.0605078)(2.8,1.5705078)
\end{pspicture}
}
\caption{Two pseudovariety posets}
\end{figure}

Some of our arguments depend on checking certain properties of
specific finite semigroups. Although in principle such calculations
could be carried out by hand they may be done much faster with the
help of a computer. For this purpose, we used the well established
programming language GAP \cite{Gap}. The relevant programs are
included in an appendix.

%%%%%%%%%%%%%%%%%%%%%%%%%%%%%%%%%%%%%%%%%%%%%%%%%%%%%%%%%%%%%%%%%%%%%%%%%%%%%%%%%%%%%%%%%%%%%%%%%%%%%%%%%%%%%%%%%%%%%%%%%%%%%%%%%%%%%%%%%%%%%%

\section{Preliminaries}\label{sec:prelims}

For standard notation and terminology relating to finite semigroups, refer to
\cite{Alm, Cli}.
%A completely $0$-simple finite semigroup $S$ is isomorphic
%with a regular Rees matrix semigroup $\mathcal{M}^{0}(G, n,m;P)$,
%where $G$ is a maximal subgroup of $S$, $P$ is an $m\times n$
%sandwich matrix with entries in $G^{\theta}$, and $n$ and $m$ are
%positive integers. The nonzero elements of $S$ are denoted 
%$(g;i,j)$, where $g\in G$, $1\leq i \leq n$ and $1\leq j\leq m$; the
%zero element is denoted $\theta$. The $(j,i)$-entry of $P$ is denoted $p_{ji}$. The set of nonzero elements
%is denoted $\mathcal{M} (G,n,m;P)$. If all elements of $P$ are
%nonzero then $\mathcal{M} (G,n,m;P)$ is a subsemigroup and every completely simple finite
%semigroup is of this form. 
We denote by $\mathcal{B}_n(G)$ an $n\times n$ Brandt semigroup over a
group $G$. Note that $\mathcal{B}_n(G)$ is an inverse completely $0$-simple
semigroup.
%$\mathcal{M}^0 (G,n,n;I_n)$ 
%$\mathcal{M}(G,1,1;I_1)$.
For elements $s_1,\ldots,s_n$ of a semigroup $S$, we denote $\langle
s_1\ldots,s_n\rangle$ the subsemigroup that they generate.

Jespers and Okni{\'n}ski proved that a completely $0$-simple semigroup $S$ with a maximal subgroup $G$ is
nilpotent if and only if $G$ is nilpotent and $S$ is a Brandt semigroup over $G$ \cite[Lemma 2.1]{Jes-Okn}. 
The next lemma gives a necessary and sufficient condition for a finite semigroup not to be nilpotent.

\begin{lem}[\cite{Jes-Sha}]
  \label{finite-nilpotent}
  A finite semigroup $S$ is not nilpotent if and only if there exist a
  positive integer $m$, distinct elements $x, y\in S$, and elements
  $w_{1},\ldots, w_{m}\in S^{1}$ such that the equalities $x =
  \lambda_{m}(x, y, w_{1}, \ldots, w_{m})$ and $y = \rho_{m}(x,y,
  w_{1}, \ldots, w_{m})$ hold.
\end{lem}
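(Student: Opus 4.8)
The plan is to prove the two implications separately, the easier direction being that the stated condition implies non-nilpotency.

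First I would handle the ``if'' direction. Suppose there exist $m\geq 1$, distinct $x,y\in S$, and $w_1,\ldots,w_m\in S^1$ with $x=\lambda_m(x,y,w_1,\ldots,w_m)$ and $y=\rho_m(x,y,w_1,\ldots,w_m)$. The idea is to iterate: substituting these equalities back into themselves produces, for every $k\geq 1$, elements $u_1,\ldots,u_{km}\in S^1$ (obtained by concatenating $k$ shifted copies of the $w_i$, interleaved with the previous $\lambda$'s and $\rho$'s) such that $x=\lambda_{km}(x,y,u_1,\ldots,u_{km})$ and $y=\rho_{km}(x,y,u_1,\ldots,u_{km})$. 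The key point is that the recursion $\lambda_{n+1}=\lambda_n z_{n+1}\rho_n$, $\rho_{n+1}=\rho_n z_{n+1}\lambda_n$ lets one express $\lambda_{m+n}$ and $\rho_{m+n}$ evaluated at a longer list in terms of $\lambda_n,\rho_n$ evaluated with $x$ replaced by $\lambda_m(x,y,\ldots)=x$ and $y$ replaced by $\rho_m(x,y,\ldots)=y$; so the equalities persist at all lengths $km$. Hence for no $n$ can we have $\lambda_n\equiv\rho_n$ identically on $S$ (taking the list to agree with some $km\geq n$ and padding, or directly: if $\lambda_n=\rho_n$ held for all arguments with $n$ between $km$ and $(k+1)m$, we would get $x=y$), contradicting nilpotency. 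I should be slightly careful to check that padding the argument list with $1\in S^1$ on the right preserves the evaluated values, which follows again from the recursion since multiplying by $1$ does nothing; this lets me match an arbitrary target $n$.

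For the ``only if'' direction, assume $S$ is not nilpotent. Then for every $n$ there are $a,b\in S$ and $c_1,\ldots,c_n\in S^1$ with $\lambda_n(a,b,c_1,\ldots,c_n)\neq\rho_n(a,b,c_1,\ldots,c_n)$. Since $S$ is finite, I would consider the sequence of pairs
$$\bigl(\lambda_k(a,b,c_1,\ldots,c_k),\,\rho_k(a,b,c_1,\ldots,c_k)\bigr)\in S\times S$$
for $k=0,1,2,\ldots$ along a suitable infinite sequence of arguments, and exploit finiteness of $S\times S$ to find a repetition together with distinctness of the two coordinates. Concretely, fix a single infinite sequence $c_1,c_2,\ldots\in S^1$ and set $a=\lambda_0=x_0$, $b=\rho_0=y_0$; the pair $(x_k,y_k):=(\lambda_k,\rho_k)$ evaluated on this data satisfies $(x_{k+1},y_{k+1})=(x_k c_{k+1} y_k,\ y_k c_{k+1} x_k)$, so it is determined by $(x_k,y_k)$ and $c_{k+1}$. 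By a pumping/pigeonhole argument on the finite set $S\times S$, one can arrange (choosing the $c_i$ and $a,b$ appropriately, using non-nilpotency to guarantee $x_k\neq y_k$ can be kept along the way) indices $i<j$ with $(x_i,y_i)=(x_j,y_j)$ and $x_i\neq y_i$. Then, reading off the block of generators $c_{i+1},\ldots,c_j$ and using the recursion relating $\lambda_{j},\rho_{j}$ to $\lambda_{j-i},\rho_{j-i}$ with initial data $(x_i,y_i)$, I obtain $m:=j-i$, the distinct elements $x:=x_i$, $y:=y_i$, and words $w_\ell := c_{i+\ell}$ for $1\le\ell\le m$ with $x=\lambda_m(x,y,w_1,\ldots,w_m)$ and $y=\rho_m(x,y,w_1,\ldots,w_m)$, as required.

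The main obstacle is the ``only if'' direction: arranging the cycle in $S\times S$ so that the two coordinates are guaranteed to be \emph{distinct} at the repeated pair. A single arbitrary non-nilpotent witness need not lie on such a cycle; one must combine witnesses of increasing length with the pigeonhole principle, or run the argument in the free profinite semigroup / use an idempotent-power trick, to force both a genuine period and persistent distinctness. I would resolve this by taking a witness $(a,b,c_1,\ldots,c_n)$ with $n$ larger than $|S|^2$, so that among $(x_0,y_0),\ldots,(x_n,y_n)$ two pairs coincide, say at $i<j$; then either some repeated pair already has distinct coordinates — giving the result directly — or every pair on the cycle has equal coordinates, but then $x_k=y_k$ propagates forward forever, forcing $x_n=y_n$ and contradicting the choice of witness. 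This case analysis is the crux and should be written out carefully.
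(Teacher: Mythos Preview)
The paper does not prove this lemma; it is simply quoted from \cite{Jes-Sha}. Your argument is essentially the standard pigeonhole proof and is correct in outline, with two small points to clean up.

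First, the remark about padding with $1$'s in the ``if'' direction is wrong and should be dropped: one has $\lambda_{n+1}(x,y,w_1,\ldots,w_n,1)=\lambda_n\cdot 1\cdot\rho_n=\lambda_n\rho_n$, not $\lambda_n$. You do not need padding at all. The periodic sequence $w_1,\ldots,w_m,w_1,\ldots,w_m,\ldots$ already suffices (no interleaving with old $\lambda$'s and $\rho$'s is needed either): the recursion shows the pair $(\lambda_k,\rho_k)$ along this sequence is $m$-periodic, equal to $(x,y)$ at each multiple of $m$; and since $\lambda_k=\rho_k$ forces $\lambda_{k'}=\rho_{k'}$ for all $k'\ge k$, the two values can never coincide. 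This is exactly your ``directly'' clause, and it gives $\lambda_n\ne\rho_n$ for every $n$.

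Second, in the ``only if'' direction the closing case analysis is redundant. Since equality $x_k=y_k$ propagates forward and $x_n\ne y_n$, you already have $x_k\ne y_k$ for every $k\le n$; hence any pair produced by pigeonhole among $(x_0,y_0),\ldots,(x_n,y_n)$ automatically has distinct coordinates, and the repeated pair together with $w_\ell=c_{i+\ell}$ gives the desired cycle. With these simplifications the argument is complete.
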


Assume that a finite semigroup $S$ has a proper ideal $M=\mathcal{B}_n(G)$ with $n>1$.
% where $\mathcal{B}_n(G)$ is the $n\times n$ Brandt semigroup over a group $G$.
There is an action $\Gamma$ of $S$ on the ${\mathcal R}$-classes of $M$ which plays a key role in~\cite{Jes-Sha3}. In this paper, we consider the dual definition of this action and denote it again by $\Gamma$. We define the action $\Gamma$
of $S$ on the ${\mathcal L}$-classes of $M$, which is a representation (a semigroup homomorphism)
$\Gamma : S\longrightarrow \mathcal{T}$,
where $\mathcal{T}$ denotes the full transformation semigroup on the set $\{1, \ldots, n\} \cup \{\theta\}$.
The definition is as follows, for $1\leq j\leq n$ and $s\in S$,
   $$\Gamma(s)(j) = \left\{ \begin{array}{ll}
      j' & \mbox{if} ~(g;i,j)s=(g';i,j')  ~ \mbox{for some} ~ g, g' \in G, \, 1\leq i \leq n\\
       \theta & \mbox{otherwise}\end{array} \right.$$
and
      $\Gamma (s)(\theta ) =\theta$.
The representation $\Gamma$ is called the \emph{\C} of $S$.

For every $s \in S$, $\Gamma(s)$ may be written as a product of \emph{orbits} which are cycles of the form
$(j_{1}, j_{2}, \ldots, j_{k})$  or sequences of the form $(j_{1}, j_{2}, \ldots, j_{k}, \theta)$, where $1\leq j_{1}, \ldots, j_{k}\leq n$.
The latter orbit means that $\Gamma(s)(j_{i})=j_{i+1}$ for $1\leq i\leq  k-1$, $\Gamma(s)(j_{k})=\theta$, $\Gamma(s)(\theta) =\theta$, and
there does not exist $1\leq r \leq n$ such that  $\Gamma(s)(r)=j_{1}$.
Orbits of the form $(j)$ with $j\in\{1,\ldots,n\}$ are written
explicitly in the orbit decomposition of $\Gamma(s)$. 
By convention, we omit orbits of the form $(j, \theta)$ in
the decomposition of  $\Gamma(s)$ (this is the reason for writing cycles of length one).  If $\Gamma(s)(j)=\theta$ for every $1\leq j \leq n$, then we simply denote $\Gamma(s)$ by $\overline{\theta}$.

If all orbits of
a transformation $\varepsilon$ appear in the expression of $\Gamma(s)$ as a product of disjoint
orbits, then we denote this by $\varepsilon\subseteq \Gamma(s)$.
If 
$$\Gamma(s)(j_{i,1})=j_{i,2}, \ldots,\ \Gamma(s)(j_{i,p_i-1})=j_{i,p_i}\quad (i=1,\ldots,q)$$
then we write  
\begin{equation}\label{[]}
[j_{1,1},j_{1,2}, \ldots,j_{1,p_1};\ldots;j_{i,1},j_{i,2}, \ldots,j_{i,p_i};\ldots; j_{q,1},j_{q,2}, \ldots,j_{q,p_q}] \sqsubseteq \Gamma(s).
\end{equation}
Note that (\ref{[]}) does not imply that $\Gamma(s)(j_{i,p_i})=j_{i,1}$, in contrast with inclusion 
$(j_{i,1},j_{i,2}, \ldots,j_{i,p_i})\subseteq \Gamma(s)$. Also (\ref{[]}) does not imply that there does not exist 
$1\leq r \leq n$ such that $\Gamma(s)(r)=j_{i,1}$, in contrast with the inclusion 
$(j_{i,1},j_{i,2}, \ldots,j_{i,p_i},\theta)\subseteq \Gamma(s)$.

Note that, if $g\in G$ and $1 \leq n_1, n_2 \leq n$ with
$n_1 \neq n_2$ then
$$\Gamma((g;n_1,n_2)) = (n_1,n_2,\theta) \mbox{  and } \Gamma((g;n_1,n_1)) = (n_1).$$
Therefore, if the group $G$ is trivial, then we may view the elements of $M$ as transformations.

%Now, suppose that $w_1,\ldots,w_k\in S$. We define a finite automaton $$\mathcal{A}_{w_1,\ldots,w_k}=(Q,\Sigma,\delta)$$ where $Q=\{1,\ldots,n\}\cup\{\theta\}$ is the set of states, $\Sigma=\{w_1,\ldots,w_k\}$ is the set of symbols and $\delta(\theta,w_i)=\theta$, $\delta(j,w_i)=\Gamma(w_i)(j)$, for all integers $1\leq j\leq n$ and $1\leq i\leq k$. In fact, the transition monoid of the automaton $\mathcal{A}_{w_1,\ldots,w_k}$ is isomorphic with the semigroup $\Gamma(\langle w_1,\ldots,w_k \rangle)$. Note that, if $\Gamma(\alpha_1\cdots\alpha_r)\neq \theta$ where $\alpha_1,\ldots,\alpha_r\in\{w_1,\ldots,w_k\}$, then there exist integers $1\leq n_1,n_2\leq n$ such that there is a path consisting of edges labeled $\alpha_1,\ldots,\alpha_r$ between $n_1$ and $n_2$ in the automaton $\mathcal{A}_{w_1,\ldots,w_k}$.

Let $T$ be a semigroup with a zero $\theta_T$ and let $M=\mathcal{B}_n(G)$ be a Brandt semigroup.
Let $\Delta$ be a representation of $T$ in the full transformation semigroup on the set $\{ 1, \ldots, n\} \cup \{\theta\}$ such that for
every $t\in T$, $\Delta(t) (\theta ) =\theta$, $\Delta^{-1}(\overline{\theta})= \{\theta_T\}$, and
 $\Delta(t)$ restricted to  $\{ 1, \ldots, n\} \setminus \Delta(t)^{-1}(\theta)$ is injective. 
%For every $t\in T$, let $$\Psi'(t): \{ 1, \ldots, n\} \cup \{\theta\}\rightarrow G^{\theta}$$ be a map such that $\Psi'(t)(j)\neq \theta$ if and only if $\Delta(t)(j) \neq \theta$ and $\Psi'(t_{1}t_{2}) =(\Psi'(t_{1}) \circ \Delta(t_{2})) \, \Psi'(t_{2})$ for every $t_{1},t_{2}\in T$.
 The semigroup $S=M \cup^{\Delta} T$ is the $\theta$-disjoint union of
 $M$ and $T$ (that is the disjoint union with the zeros identified).
 The multiplication is such that $T$ and $M$ are subsemigroups,
$$(g;i,j) \, t = \left\{ \begin{array}{ll}
                              (g; i,\Delta (t)(j)) & \mbox{ if } \Delta (t)(j) \neq \theta\\
                              \theta & \mbox{ otherwise,}
                          \end{array} \right.
                          $$
and
$$ t(g;i,j) = \left\{ \begin{array}{ll}
    (g;i',j) & \mbox{  if }  \Delta (t)(i')=i\\
    \theta &\mbox{ otherwise. }
    \end{array} \right.
    $$
%Note that if $G = \{1\}$, then 	$\Psi(t)(i) = 1$ if and only if $\Gamma(t)(i)\neq \theta$. In this
%case we do not put any simple insiste of $\Psi$.

Let $\mathsf{V}$ be a pseudovariety of finite semigroups. A
pro-$\mathsf{V}$ semigroup is a compact semigroup that is residually
$\mathsf{V}$. For the pseudovariety $\mathsf{S}$ of all finite
semigroups, we call pro-$\mathsf{S}$ semigroups profinite semigroups.
We denote by $\overline{\Omega}_{A}\mathsf{V}$ the free
pro-$\mathsf{V}$ semigroup on the set $A$. Such free objects are
characterized by appropriate universal properties: the profinite
semigroup $\overline{\Omega}_{A}\mathsf{V}$ comes endowed with a
mapping $\iota\colon A \rightarrow \overline{\Omega}_{A}\mathsf{V}$
such that, for every mapping $\phi \colon A \rightarrow S$ into a
pro-$\mathsf{V}$ semigroup $S$, there exists a unique continuous
homomorphism $\widehat{\phi}\colon \overline{\Omega}_{A}\mathsf{V}
\rightarrow S$ such that $\widehat{\phi}\circ\iota=\phi$. For more
details on this topic we refer the reader to \cite{Alm}.

Let $r$ be an integer. We denote the free pro-$\mathsf{V}$ semigroup on the set $\{x_1,\ldots,x_r\}$ by $\overline{\Omega}_{r}\mathsf{V}$.
Recall that a pseudoidentity (over $\mathsf{V}$) is a formal equality $\pi = \rho$ between  $\pi,\rho\in  \overline{\Omega}_{r}\mathsf{V}$ for some integer $r$.
For a set $\Sigma$ of $\mathsf{V}$-pseudoidentities, we denote by $\llbracket \Sigma \rrbracket_{\mathsf{V}}$ (or simply $\llbracket \Sigma \rrbracket$ if $\mathsf{V}$ is understood from the context) the class of all $S \in \mathsf{V}$ that satisfy all pseudoidentities from $\Sigma$. Reiterman \cite{Rei} proved that a subclass $\mathsf{V}$ of a pseudovariety $\mathsf{W}$ is a pseudovariety if and only if $\mathsf{V}$ is of the form $\llbracket \Sigma \rrbracket_{\mathsf{W}}$ for some set $\Sigma$ of $\mathsf{W}$-pseudoidentities. 

Let $1\leq r$, $S$ be a pro-\pv V semigroup, and
$\pi\in\overline{\Omega}_{r}\mathsf{V}$. The operation $\pi_S\colon
S^r \rightarrow S$ is defined as follows:
$$\pi_S(s_1,\ldots,s_r)=\widehat{\phi}(\pi)$$ where $\phi\colon
\{x_1,\ldots,x_r\} \rightarrow S$ is the mapping given by
$\phi(x_i)=s_{i}$, for all $1\leq i\leq r$ and $\widehat{\phi}$ was
defined above. The families of operations $(\pi_S)_{S\in\pv V}$ that
may be obtained in this way are precisely those that commute with
homomorphisms between members of~\pv V and are called implicit
operations. Moreover, the correspondence
$\pi\in\overline{\Omega}_{r}\mathsf{V}\mapsto(\pi_S)_{S\in\pv V}$ is
injective and one often identifies $\pi$ with its image.

For an element $s$ of a finite semigroup $S$, $s^{\omega}$ denotes the
unique idempotent power of $s$. This defines a unary implicit
operation $x \mapsto x^{\omega}$ on finite semigroups. In particular,
given an element $s$ of a profinite semigroup $S$, we may consider
$s^\omega=(x^\omega)_S(s)$. Note that
$s^{\omega}$ is the limit of the sequence $(s^{n!})_n$.
% of implicit operations.
% Now, 
% let $\pi_1,\ldots,\pi_r\in \overline{\Omega}_{r}\mathsf{V}$. Define
% recursively a sequence $(u_{1,i},\ldots,u_{r,i})$ by choosing
% $(u_{1,0},\ldots,u_{r,0})\in S^r$ and letting $u_{i,n+1} =
% \pi_i(u_{1,n},\ldots,u_{r,n})$. It turns out that each sequence
% $(u_{i,n!})_n$ converges \cite{Alm4} and we denote
% % $\lim_{n \rightarrow \infty}u_{i,n!}$
% its limit by
% $\circ_i^{\omega}(\pi_1,\ldots,\pi_r)(u_{1,0},\ldots,u_{r,0})$. For
% $1\leq i \leq r$, the
% operation $\circ_i^{\omega}(\pi_1,\ldots,\pi_r)$ is an implicit
% operation and thus also a member of $\overline{\Omega}_{r}\mathsf{V}$
% \cite[Corollary 2.5]{Alm4}. The operation $\circ(\pi_1,\ldots,\pi_r)$
% is a continuous endomorphism of the free profinite semigroup on
% $\{x_1,\ldots,x_r\}$ which is defined $\circ(\pi_1,\ldots,\pi_r)$
% we let $\circ(\pi_1,\ldots,\pi_r)(x_i)=\pi_i$ and we denote
% $\circ^{\omega}_i(\pi_1,\ldots,\pi_r)$ by
% $\circ^{\omega}(\pi_1,\ldots,\pi_r)(x_i)$, for every $1\leq i\leq r$.

For a profinite semigroup $S$, let $\mathrm{End}(S)$ be the monoid of
all continuous endomorphisms of $S$. In case $S$ has a finitely
generated dense subsemigroup, it turns out that $\mathrm{End}(S)$ is a
profinite monoid under the pointwise convergence topology, that is, as
a subspace of the product space $S^S$ \cite{Hunter:1983} (see also
\cite{Steinberg:2010pm}). In particular, given $\varphi$ in
$\mathrm{End}(S)$, we may consider the continuous endomorphism
$\varphi^\omega$, which is the pointwise limit of the sequence
$\lim\varphi^{n!}$.

As examples, consider the pseudovarieties $\mathsf{G_{nil}}$ and
$\mathsf{BG}$. We have
\begin{displaymath}
  \mathsf{G_{nil}}
  =\llbracket
  \psi^{\omega}(x)=x^{\omega},x^{\omega}y=yx^{\omega}=y
  \rrbracket 
\end{displaymath}
where $\psi$ is the continuous endomorphism of the free profinite
semigroup on $\{x,y\}$ such that $\psi(x)=x^{\omega-1}y^{\omega-1}xy$ and
$\psi(y)=y$ \cite[Example 4.15(2)]{Alm2}, and
\begin{displaymath}
  \mathsf{BG}=\llbracket (ef)^{\omega}=(fe)^{\omega} \rrbracket
\end{displaymath}
where $e=x^{\omega}$, $f=y^{\omega}$ (see, for example,
\cite[Exercise~5.2.7]{Alm}).

%%%%%%%%%%%%%%%%%%%%%%%%%%%%%%%%%%%%%%%%%%%%%%%%%%%%%%%%%%%%%%%%%%%%%%%%%%%%%%%%%%%%%%%%%%%%%%%%%%%%%%%%%%%%%%%%%%%%%%%%%%%%%%%%%%%%%%%%%%%%%%
%%%%%%%%%%%%%%%%%%%%%%%%%%%%%%%%%%%%%%%%%%%%%%%%%%%%%%%%%%%%%%%%%%%%%%%%%%%%%%%%%%%%%%%%%%%%%%%%%%%%%%%%%%%%%%%%%%%%%%%%%%%%%%%%%%%%%%%%%%%%%%
%%%%%%%%%%%%%%%%%%%%%%%%%%%%%%%%%%%%%%%%%%%%%%%%%%%%%%%%%%%%%%%%%%%%%%%%%%%%%%%%%%%%%%%%%%%%%%%%%%%%%%%%%%%%%%%%%%%%%%%%%%%%%%%%%%%%%%%%%%%%%%

%Recall that a semigroup is called a block group if each element has at most one inverse (see for example \cite{Rho-Ste}). For instance, an inverse semigroup is the same thing as a regular block group. If the elements $b$ and $c$ are the inverse of an element $a$ which entails:\\
%$baca=babaca=baba=ba,caba=cacaba=caca=ca$. Then every non-block group has two elements $b$ and $c$ such that $bc=c,cb=b$.

\section{The pseudovariety \pv{MN}}
\label{sec:pvy-pvmn}

Jespers and Riley \cite{Jes-Ril} called a semigroup $S$ \emph{weakly
  Mal’cev nilpotent} (WMN) if %, for some $n$,
\begin{align*}
  &\exists n\ge0\ \forall a,b\in S\ \forall c_{1},c_2\in S^{1},\\
  &\quad
    \lambda_{n}(a,b,c_{1},c_2,c_1,c_2,\ldots)
    = \rho_{n}(a,b,c_{1},c_2,c_1,c_2,\ldots).
\end{align*}
They proved that a linear semigroup $S$ is MN if and only
if $S$ is WMN \cite[Corollary
12]{Jes-Ril}. %Then $\mathsf{WMN}=\mathsf{MN}$.
Recall that a linear semigroup is a subsemigroup of a matrix monoid
$M_n(K)$ over a field $K$. In particular, all finite semigroups are
linear. The following lemma gives a slightly simpler characterization
of Mal'cev nilpotency for finite semigroups by avoiding referring to
the identity element.

\begin{lem}\label{finite-nil-dif}
  A finite semigroup $S$ is Mal'cev nilpotent if and only if
  % there exists a positive integer $n$ such that
  \begin{align}
    \label{eq:finite-nil-dif}
    &\exists n\ge0\ \forall a,b,c_1,c_2\in S,\\
    &\quad\notag
      \lambda_n(a,b,c_1,c_2,c_1,c_2,\ldots) =
      \rho_n(a,b,c_1,c_2,c_1,c_2,\ldots).
  \end{align}
\end{lem}

\begin{proof}
  Suppose that there exists a finite semigroup $S$ that satisfies
  \eqref{eq:finite-nil-dif}
  % the condition of the lemma
  but is not Mal'cev nilpotent.

  If $S\not\in \mathsf{BG}_{nil}$, then there exists a regular
  ${\mathcal J}$-class $\mathcal{M}^0(G,n,m;P)\setminus \{\theta\}$ of
  $S$ such that one of the following conditions holds:
  \begin{enumerate}[label=(\roman*)]
  \item\label{item:finite-nil-dif-1} $G$ is not a nilpotent group;
  \item\label{item:finite-nil-dif-2} there exist integers $1\leq
    i_1<i_2\leq n$ and $1\leq j\leq m$ such that $p_{ji_1},p_{ji_2}\neq
    \theta$;
  \item\label{item:finite-nil-dif-3} there exist integers $1\leq i\leq
    n$ and $1\leq j_1<j_2\leq m$ such that $p_{j_1i},p_{j_2i}\neq
    \theta$.
  \end{enumerate}
  In case \ref{item:finite-nil-dif-1}, $G$ is not MN and, thus, $G$ is
  not WMN. Since $G$ has an identity, $G$ does not satisfy the
  condition of the lemma, a contradiction. If
  \ref{item:finite-nil-dif-2} holds, then
  \begin{align*}
    &\lambda_n((1_G;i_1,j),(1_G;i_2,j),
      (1_G;i_1,j),(1_G;i_1,j),(1_G;i_1,j),(1_G;i_1,j),\ldots) \neq\\
    &\rho_n((1_G;i_1,j),(1_G;i_2,j),
      (1_G;i_1,j),(1_G;i_1,j),(1_G;i_1,j),(1_G;i_1,j),\ldots),
  \end{align*}
  for every integer $n\geq 0$, which contradicts the assumption that
  $S$ satisfies~\eqref{eq:finite-nil-dif}. Similarly, we reach a
  contradiction assuming Condition \ref{item:finite-nil-dif-3}.

  Now, suppose that $S\in \mathsf{BG}_{nil}$. Since
  $S\not\in\mathsf{MN}$, by Lemma~\ref{finite-nilpotent} there exist a
  positive integer $m$, distinct elements $x, y\in S$ and elements $
  w_{1}, w_{2}, \ldots, w_m\in S^{1}$ such that
  \begin{eqnarray}
    \label{nil-fi}
    x = \lambda_{m}(x, y, w_1, \ldots, w_m)
    \mbox{ and }
    y = \rho_{m}(x,y, w_1,\ldots, w_m).
  \end{eqnarray}
  As $x\neq y$ and $S\in \mathsf{BG}_{nil}$, by \eqref{nil-fi}, there
  exists a regular ${\mathcal
    J}$-class $$M=\mathcal{M}^0(G,n,n;I_n)\setminus \{\theta\}$$
  of $S$ such that $x,y\in M$. Then, there exist elements
  $(g;i,j),(g';i',j')\in M$ such that $x=(g;i,j)$ and $y=(g';i',j')$.
  By \eqref{nil-fi}, we also obtain that $[j,i';j',i] \sqsubseteq
  \Gamma(w_1)$ and $[j,i;j',i'] \sqsubseteq \Gamma(w_2)$, where
  $\Gamma$ is an ${\mathcal L}$-representation of $S$. If $w_1=1$,
  then we have $j=i'$ and $j'=i$. In case $i\neq j$, we get
  $(i,j)\subseteq \Gamma(w_2)$. Thus, we have
  \begin{align*}
    &\lambda_{n}((1_G;i,i),(1_G;j,j),w_2,w_2^2,w_2,w_2^2,\ldots) \neq\\
    &\rho_{n}((1_G;i,i),(1_G;j,j),w_2,w_2^2,w_2,w_2^2,\ldots),
  \end{align*}
  for every integer $n\ge0$, which contradicts the assumption that $S$
  satisfies~\eqref{eq:finite-nil-dif}. If $i=j$, then we have
  $i=j=i'=j'$ which entails that $G$ is not nilpotent, in
  contradiction with the assumption that $S\in \mathsf{BG}_{nil}$.
  Also, if $w_2=1$, we reach a similar contradiction. Therefore, we
  have $w_1,w_2\neq 1$. If $(i,j)=(i',j')$, then we conclude that $G$
  is not nilpotent, contradicting the assumption that $S\in
  \mathsf{BG}_{nil}$. Hence, we have $(i,j)\neq(i',j')$. Now, as
    $[j,i';j',i] \sqsubseteq \Gamma(w_1)$ and $[j,i;j',i'] \sqsubseteq
    \Gamma(w_2)$ we have
  \begin{align*}
    &\lambda_{n}((1_G;i,j),(1_G;i',j'),w_1,w_2,w_1,w_2,\ldots) \neq\\
    &\rho_{n}((1_G;i,j),(1_G;i',j'),w_1,w_2,w_1,w_2,\ldots),
  \end{align*} for every integer $n\ge0$. 
  
  The result follows.
\end{proof}

Combining Lemma~\ref{finite-nil-dif} with the same method as in the
proof of \cite[Lemma 2.2]{Jes-Sha}, one obtains the following lemma.

\begin{lem}
  \label{finite-nilpotentW1W2}
  A finite semigroup $S$ is not nilpotent if and only if there exist a
  positive integer $m$, distinct elements $x, y\in S$ and elements $w_1,w_2\in S$
  such that the equalities 
  $$x =
  \lambda_{m}(x, y, w_1, w_2,w_1, w_2,\ldots)\ \text{and}\ y = \rho_{m}(x,y,w_1, w_2,w_1, w_2,\ldots)$$ hold.
\end{lem}

Also using Lemma~\ref{finite-nil-dif}, one immediately deduces the
following theorem.

\begin{thm}
  \label{PS MN}
  The following equality holds
  \begin{displaymath}
    \mathsf{MN}=\llbracket \phi^{\omega}(x)=\phi^{\omega}(y) \rrbracket
  \end{displaymath}
  where $\phi$ is the continuous endomorphism of the free profinite
  semigroup on $\{x,y,z,t\}$ such that $\phi(x)=xzytyzx$,
  $\phi(y)=yzxtxzy$, $\phi(z)=z$, and $\phi(t)=t$.
\end{thm}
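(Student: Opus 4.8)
\emph{Sketch of the intended proof.} The combinatorial engine is a composition identity for Mal'cev words: for all $p,q\ge 0$, writing $\vec z=(z_1,\dots,z_p)$ and $\vec u=(u_1,\dots,u_q)$,
\[
\lambda_{p+q}(x,y,\vec z,\vec u)=\lambda_q\bigl(\lambda_p(x,y,\vec z),\rho_p(x,y,\vec z),\vec u\bigr),
\]
together with the identity obtained by replacing the outer $\lambda_q$ by $\rho_q$. Both are proved simultaneously by induction on $q$ from the recursions for $\lambda_n,\rho_n$ (the base case $q=0$ being $\lambda_0(A,B)=A$, $\rho_0(A,B)=B$). Since $\phi(x)=xzytyzx=\lambda_2(x,y,z,t)$ and $\phi(y)=yzxtxzy=\rho_2(x,y,z,t)$, an induction on $n$ then gives, writing $(z,t)^{n}$ for the length-$2n$ tuple $z,t,z,t,\dots,z,t$,
\[
\phi^{n}(x)=\lambda_{2n}\bigl(x,y,(z,t)^{n}\bigr),\qquad \phi^{n}(y)=\rho_{2n}\bigl(x,y,(z,t)^{n}\bigr),
\]
the inductive step being $\phi^{n+1}(x)=\phi(\phi^{n}(x))=\lambda_{2n}\bigl(\phi(x),\phi(y),(z,t)^{n}\bigr)$ (as $\phi$ is an endomorphism fixing $z$ and $t$), which the composition identity with $p=2$ rewrites as $\lambda_{2n+2}\bigl(x,y,(z,t)^{n+1}\bigr)$.

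Next I would translate the pseudoidentity. Fix a finite semigroup $S$ and $a,b,c,d\in S$, and let $\psi$ be the continuous homomorphism from the free profinite semigroup on $\{x,y,z,t\}$ to $S$ with $x,y,z,t\mapsto a,b,c,d$. Since $\phi^{\omega}=\lim_{k}\phi^{k!}$ and $\psi$ is continuous, $\psi(\phi^{\omega}(x))=\lim_{k}\lambda_{2\,k!}\bigl(a,b,(c,d)^{k!}\bigr)$, a sequence that is eventually constant because $S$ is finite. Using two elementary facts about Mal'cev words — that $\lambda_m(u,u,\vec v)=\rho_m(u,u,\vec v)$ for every $m$, and that if $\lambda_m=\rho_m$ holds identically in $S$ over a fixed set of parameter and connector values then so does $\lambda_{m+1}=\rho_{m+1}$ — together with the finiteness of $S^{4}$, one obtains the characterisation: $S\models\phi^{\omega}(x)=\phi^{\omega}(y)$ if and only if there is an $N$ with $\lambda_{N}\bigl(a,b,(c,d)^{N/2}\bigr)=\rho_{N}\bigl(a,b,(c,d)^{N/2}\bigr)$ for all $a,b,c,d\in S$; that is, if and only if $S$ satisfies the weak (period-two) Mal'cev condition with connectors in $S$.

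It remains to identify this class with $\mathsf{MN}$. If $S\in\mathsf{MN}$ has class $n$ then, by the second fact above, $\lambda_m=\rho_m$ holds identically in $S$ for every $m\ge n$, and specialising to alternating connectors shows $S\models\phi^{\omega}(x)=\phi^{\omega}(y)$. For the converse, every finite semigroup is linear: the faithful transformation representation of $S^{1}$, followed by the representation of a transformation by the $0$–$1$ matrix of its graph, embeds $S^{1}$, hence $S$, into a matrix monoid over any field. Therefore, by \cite[Corollary~12]{Jes-Ril}, a non-nilpotent finite semigroup is not weakly Mal'cev nilpotent; combining this with Lemma~\ref{finite-nilpotent} and the composition identity (which let one iterate a cyclic non-nilpotency witness and absorb the value $1$ among the connectors into the parameters), one concludes that such an $S$ fails the period-two equality above for \emph{every} $N$ and suitable $c,d\in S$. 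By the characterisation of the previous paragraph this means $S\not\models\phi^{\omega}(x)=\phi^{\omega}(y)$, completing the proof.

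The crux is the step just invoked — that for a finite (equivalently, linear) semigroup non-nilpotency is already detected by the \emph{period-two} condition — which is \cite[Corollary~12]{Jes-Ril}; I would cite it rather than reprove it, and the (technical, not deep) reconciliation of connectors in $S$ with connectors in $S^{1}$ would be handled by the absorption identities of the first paragraph. Everything else — the composition identity, the behaviour of the profinite $\omega$-power, and the passage from a cyclic witness to one of period two with connectors in $S$ — is routine bookkeeping.
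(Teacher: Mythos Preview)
Your approach is correct and essentially the same as the paper's: both reduce the claim to \cite[Corollary~12]{Jes-Ril} (that MN and WMN coincide for linear semigroups) together with the observation that every finite semigroup is linear. The paper's proof consists of a single sentence to this effect, so your unpacking of the identity $\phi^{n}(x)=\lambda_{2n}\bigl(x,y,(z,t)^{n}\bigr)$ via the composition law and your discussion of connectors in $S$ versus $S^{1}$ go well beyond what the paper spells out.
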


%\psscalebox{1.0 1.0} % Change this value to rescale the drawing.
%{
%\begin{pspicture}(0,-1.815)(11.1,1.815)
%\psline[linecolor=black, linewidth=0.04](0.5,1.485)(11.1,1.485)
%\rput[bl](1.0,1.585){$x$}
%\rput[bl](8.4,1.585){$y$}
%\rput[bl](0.1,1.485){$\theta$}
%\rput[bl](0.7,0.885){$\theta(x)=x^{\omega-1}y^{\omega-1}xy$}
%\rput[bl](8.1,0.985){$\theta(y)=y$}
%\rput[bl](0.6,0.285){$\theta^2(x)=\theta(\theta(x))=\theta(x)^{\omega-1}\theta(y)^{\omega-1}\theta(x)\theta(y)$}
%\rput[bl](8.1,0.285){$\theta^2(y)=\theta(\theta(y))=y$}
%\psline[linecolor=black, linewidth=0.04](0.4,-0.615)(11.0,-0.615)
%\rput[bl](0.9,-0.515){$x$}
%\rput[bl](4.2,-0.515){$y$}
%\rput[bl](0.0,-0.615){$\phi$}
%\rput[bl](0.6,-1.215){$\phi(x)=xzytyzx$}
%\rput[bl](3.6,-1.215){$\phi(y)=yzxtxzy$}
%\rput[bl](7.5,-1.215){$\phi(z)=z$}
%\rput[bl](7.8,-0.515){$z$}
%\rput[bl](9.7,-0.515){$t$}
%\rput[bl](9.5,-1.215){$\phi(t)=t$}
%\rput[bl](0.6,-1.815){$\phi^2(x)=\phi(x)\phi(z)\phi(y)\phi(t)\phi(y)\phi(z)\phi(x)$}
%\end{pspicture}
%}

%\begin{proof}
%Since all finite semigroups are linear, this follows at once from \cite[Corollary 12]{Jes-Ril}.
%\end{proof}

In particular, Theorem~\ref{PS MN} shows that the pseudovariety
$\mathsf{MN}$ is finitely based.

% In \cite{Hig-Mar}, the semigroup $N_1$ is defined as the
% $\theta$-disjoint union of the Brandt semigroup $\mathcal{B}_4(\{1\})$
% with the null semigroup $\{w,v,\theta\}$,
% \begin{eqnarray} \label{ex1}
% N_1 = \mathcal{B}_4(\{1\}) \cup^{\Gamma_{N_1}} \{w,v,\theta\},
% \end{eqnarray}
% where $\Gamma_{N_1}(w)=(2,3,\theta)(1,4,\theta)$ and $\Gamma_{N_1}(v)=(1,3,\theta)(2,4,\theta)$.
% Note that $\langle s_1,s_2,s_3\rangle \in\mathsf{MN}$, for all $s_1,s_2, s_3\in N_1$, and $N_1\not\in\mathsf{MN}$. 
%prove by GAP

%%%%%%%%%%%%%%%%%%%%%%%%%%%%%%%%%%%%%%%%%%%%%%%%%%%%%%%%%%%%%%%%%%%%%%%%%%%%%%%%%%%%%%%%%%%%%%%%%%%%%%%%%%%%%%%%%%%%%%%%%%%%%%%%%%%%%%%%%%%%%%
%%%%%%%%%%%%%%%%%%%%%%%%%%%%%%%%%%%%%%%%%%%%%%%%%%%%%%%%%%%%%%%%%%%%%%%%%%%%%%%%%%%%%%%%%%%%%%%%%%%%%%%%%%%%%%%%%%%%%%%%%%%%%%%%%%%%%%%%%%%%%%
%%%%%%%%%%%%%%%%%%%%%%%%%%%%%%%%%%%%%%%%%%%%%%%%%%%%%%%%%%%%%%%%%%%%%%%%%%%%%%%%%%%%%%%%%%%%%%%%%%%%%%%%%%%%%%%%%%%%%%%%%%%%%%%%%%%%%%%%%%%%%%

\section{The pseudovariety \pv{PE}}
\label{sec:pvy-pvpe}

As in \cite{Jes-Ril}, we denote by $F_{7}$ the 7-element semigroup
which is the $\theta$-disjoint union of the Brandt semigroup
$\mathcal{B}_2(\{1\})$ with the cyclic group $\{1,u\}$ of order~$2$
with a zero $\theta$ adjoined:
\begin{eqnarray} \label{ex2}
F_{7} &=& \mathcal{B}_2(\{1\}) \cup^{\Gamma_{F_7}} \{1,u,\theta\}
,\end{eqnarray}
where $\Gamma_{F_7}(u)=(1,2)$ and $\Gamma_{F_7}(1)=(1)(2)$. Note that
$F_{7}=\langle u,(1;1,1)\rangle$.
In fact, $F_{7}$ is a member of the family of minimal non-cryptic inverse semigroups considered by Reilly in \cite{Reil}.

We recall that a semigroup $S$ divides a semigroup $T$ and we write $S\prec T$ if $S$
is a homomorphic image of a subsemigroup of $T$. 
We say that a finite semigroup $S$ is $\times$-prime whenever, for all finite semigroups $T_1$ and $T_2$, if $S\prec T_1\times T_2$, then $S\prec T_1$ or $S\prec T_2$. For example, a right or left zero semigroup with 2 elements is $\times$-prime \cite[Exercise 9.3.1(a)]{Alm}.
%26. Determine all finite $\times$-prime semigroups. (9.3.1 and [Eil76, IV].)
A more sophisticated example of $\times$-prime semigroups is $F_{7}$
\cite[Theorem 7.3.10]{Rho-Ste}. It follows that the 
class of all finite semigroups $S$ such that $F_7\not\prec S$ is a
pseudovariety.

In \cite{Jes-Ril}, it is proved that a finite semigroup $S$ is positively Engel if and only if the following properties hold:
\begin{enumerate}
\item all non-null  principal factors of $S$ are inverse semigroups whose maximal subgroups are nilpotent groups.
\item $S$ does not have an epimorphic image which admits $F_{7}$ as a subsemigroup.
\end{enumerate}
We rewrite that result as Theorem~\ref{PE F_7}.

\begin{thm}
  \label{PE F_7}
  Let $S$ be a finite semigroup. The semigroup $S$ is positively Engel
  if and only if $S\in\mathsf{BG_{nil}}$ and $F_7\not\prec S$.
\end{thm}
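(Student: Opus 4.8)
The statement is a reformulation of the result quoted just above from \cite{Jes-Ril}: a finite semigroup $S$ is positively Engel iff (1) all non-null principal factors of $S$ are inverse semigroups with nilpotent maximal subgroups, and (2) $S$ has no epimorphic image containing $F_7$ as a subsemigroup. Thus the whole task is to see that the conjunction of (1) and (2) is exactly the assertion ``$S\in\mathsf{BG_{nil}}$ and $F_7\not\prec S$''. I would split this into two equivalences, one for each conjunct.

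First I would show that condition (1) is equivalent to $S\in\mathsf{BG_{nil}}$. Recall that $\mathsf{BG}=\llbracket (ef)^{\omega}=(fe)^{\omega}\rrbracket$ with $e=x^\omega,f=y^\omega$, so a finite semigroup lies in $\mathsf{BG}$ iff each of its elements has at most one inverse, and $\mathsf{BG_{nil}}$ adds the requirement that all subgroups be nilpotent. The key classical fact is that a finite semigroup is a block group iff each of its (regular) principal factors is an inverse semigroup: a completely $0$-simple semigroup $\mathcal{M}^0(G,n,m;P)$ is inverse iff $n=m$ and $P$ can be normalized to $I_n$, and one checks that failure of this (a column or row of $P$ with two nonzero entries) produces an element with two distinct inverses, which then lifts to $S$ since regular $\mathcal{J}$-classes transport idempotents and inverses faithfully to principal factors. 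So ``all non-null principal factors are inverse'' $\iff$ $S\in\mathsf{BG}$; adjoining ``maximal subgroups are nilpotent'' on both sides gives condition (1) $\iff$ $S\in\mathsf{BG_{nil}}$. The null principal factors carry no constraint on either side, so they are irrelevant.

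Next I would show that, \emph{granting} condition (1) (equivalently $S\in\mathsf{BG_{nil}}$), condition (2) is equivalent to $F_7\not\prec S$. One direction is immediate: if $S$ has an epimorphic image $S'$ with $F_7$ as a subsemigroup, then $F_7\prec S$ by definition of division, so $\neg(2)\Rightarrow F_7\prec S$, i.e. $F_7\not\prec S\Rightarrow(2)$, with no hypothesis needed. For the converse I would use the fact, established in the preceding theorem of the paper, that $F_7$ is $\times$-prime, together with the standard observation that division ``$F_7\prec S$'' already means there is a subsemigroup $U\leq S$ and an epimorphism $U\twoheadrightarrow F_7$; one then has to upgrade this to an epimorphic image of $S$ (not merely of a subsemigroup) that \emph{contains} $F_7$. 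The natural route is: assuming $S\in\mathsf{BG_{nil}}$ and $F_7\prec S$, produce a principal factor of $S$ (or of a subsemigroup) through which the copy of $F_7$ must ``act'', using the \C\ representation $\Gamma$ machinery of Section~\ref{sec:prelims}; since $S\in\mathsf{BG_{nil}}$ forces the relevant ideal to be of the form $\mathcal{M}^0(G,n,n;I_n)$ with $G$ nilpotent, and $F_7$ itself is built on $\mathcal{M}^0(\{1\},2,2;I_2)$ with the order-$2$ group acting as the transposition $(1,2)$, one extracts from the $\Gamma$-image of $U$ the needed $F_7$ inside a Rees-quotient of $S$. Alternatively, and more in the spirit of \cite{Jes-Ril}, one simply invokes the quoted theorem directly: it already states the equivalence of positive Engel with (1)$\wedge$(2), so after the translation (1)$\iff S\in\mathsf{BG_{nil}}$ it only remains to translate (2) into $F_7\not\prec S$ \emph{under} (1). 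Here the point is that, given (1), an abstract division $F_7\prec S$ can always be realized as ``$F_7$ is a subsemigroup of an epimorphic image of $S$'': if $\phi:U\twoheadrightarrow F_7$ with $U\leq S$, extend $U$ to $S$ and push the congruence $\ker\phi$ forward, using that the structural constraints from (1) (inverse principal factors, nilpotent subgroups) are inherited and prevent the congruence from collapsing the copy of $F_7$.

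The main obstacle is this last upgrade step — from the bare definition of $\prec$ to an epimorphic image of $S$ \emph{containing} $F_7$ — since $\times$-primeness of $F_7$ is the tool for decomposing direct products but does not by itself convert ``quotient of a subsemigroup'' into ``subsemigroup of a quotient''. I expect the cleanest resolution is to not re-prove \cite[the quoted result]{Jes-Ril} from scratch but to cite it and spend the proof purely on the two translations (1)$\iff S\in\mathsf{BG_{nil}}$ and, under (1), (2)$\iff F_7\not\prec S$; the second translation's hard direction is where the $\Gamma$-representation analysis of Section~\ref{sec:prelims} and the structure of $F_7=\langle u,(1;1,1)\rangle$ do the real work, exhibiting a principal factor of a subsemigroup of $S$ isomorphic to a semigroup that maps onto (and contains) $F_7$.
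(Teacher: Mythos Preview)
Your overall plan — reduce the theorem to the quoted Jespers–Riley characterization and then translate conditions (1) and (2) — is sound, and the translation (1) $\Longleftrightarrow S\in\mathsf{BG_{nil}}$ is routine and correct. The problem is the second translation, and specifically the direction you yourself flag as the ``main obstacle'': from $F_7\prec S$ (quotient of a subsemigroup) to $\neg(2)$ (subsemigroup of a quotient of $S$). You never actually resolve this; you mention $\times$-primeness of $F_7$, but that concerns divisors of direct products and says nothing about interchanging the order of ``subsemigroup'' and ``quotient''. The $\Gamma$-representation route you sketch could presumably be made to work, but it is the machinery of the \emph{later} lemmas in the paper (Lemmas~\ref{m,l} and~\ref{m,l,m'}) and would make this proof circular or at least far heavier than necessary.

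The paper sidesteps the obstacle entirely: it never tries to prove (2) $\Longleftrightarrow F_7\not\prec S$. Instead it argues the two implications of the theorem asymmetrically. For one direction it assumes $S\in\mathsf{BG_{nil}}$ and $S\notin\mathsf{PE}$, applies Jespers--Riley to get an epimorphic image $S'$ containing $F_7$, and takes $U=\phi^{-1}(F_7)$ to conclude $F_7\prec S$; that is just your easy direction $\neg(2)\Rightarrow F_7\prec S$. For the other direction it argues \emph{directly from the definition of positively Engel}: if $\phi:U\twoheadrightarrow F_7$ with $U\le S$, pick $a,w_1,w_2\in U$ with $\phi(a)=(1;1,1)$, $\phi(w_1)=1$, $\phi(w_2)=u$, and check in $F_7$ that
\[
\lambda_n(a,w_2,1,w_1,w_2,w_1,w_2,\ldots)\neq\rho_n(a,w_2,1,w_1,w_2,w_1,w_2,\ldots)
\]
for all $n$, so $S\notin\mathsf{PE}$. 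No ``upgrade'' from quotient-of-subsemigroup to subsemigroup-of-quotient is needed, and no appeal to $\times$-primeness or to the $\Gamma$-representation. Your detour through condition (2) is what creates the difficulty; going back to the defining sequence of identities for $\mathsf{PE}$ dissolves it in two lines.
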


\begin{proof}
  Suppose that $S\in\mathsf{BG_{nil}}$ and $S\not\in\mathsf{PE}$. Then
  there exists an onto homomorphism $\phi:S\rightarrow S'$ such that
  $F_7$ is a subsemigroup of $S'$. Let $U=\phi^{-1}(F_7)$. Then
  $\phi\mid_U:U\rightarrow F_7$ is an onto homomorphism, so that
  $F_7\prec S$.

Now, suppose that $F_7\prec S$. Then, there exist a subsemigroup $U$
of $S$ and an onto homomorphism $\phi:U\rightarrow F_7$. There exist elements $a,w\in U$ such that $\phi(a)=(1;1,1)$ and
$\phi(w)=u$. Since 
\begin{align*}
\lambda_{2n}((1;1,1),u,1,1,u,u^2,\ldots,u^{n-2})&=(1;1,1),\\
\rho_{2n}((1;1,1),u,1,1,u,u^2,\ldots,u^{n-2})&=(1;2,2),\\
\lambda_{2n-1}((1;1,1),u,1,1,u,u^2,\ldots,u^{n-2})&=(1;1,2),\\
\rho_{2n-1}((1;1,1),u,1,1,u,u^2,\ldots,u^{n-2})&=(1;2,1),
\end{align*} 
for every positive integer $n$, we have
$$\lambda_n(a,w,1,1,w,w^2,\ldots,w^{n-2})\neq\rho_n(a,w,1,1,w,w^2,\ldots,w^{n-2}),$$
for every positive integer $n$. This shows that $S\not\in\mathsf{PE}$.
The result follows.
\end{proof}

\begin{prop}
  \label{PS PE identity}
  The sequence
  \begin{displaymath}
    \left\{\bigl(
      \lambda_{n!}(x,y,z,z^{2},\ldots ,z^{n!}),
      \rho_{n!}(x,y,z,z^{2},\ldots,z^{n!})\bigr)
      \mid n\in \mathbb{N}\right\}
  \end{displaymath}
  converges in
  $(\overline{\Omega}_{\{x,y,z\}}\mathsf{S})^2$.
\end{prop}

\begin{proof}
  For simplicity, we
  let
  \begin{displaymath}
    \overline{\lambda}_{i}=\lambda_{i}(x,y,z,\ldots ,z^{i})
    \mbox{ and }
    \overline{\rho}_{i}=\rho_{i}(x,y,z,\ldots ,z^{i})
  \end{displaymath}
  for every $1\leq i$. Suppose that $S$ is a finite semigroup and
  $\phi:\overline{\Omega}_{\{x,y,z\}}\mathsf{S}\rightarrow S$ is a
  continuous homomorphism. Let $k=\abs{S}$. By the pigeonhole principle, there
  exist positive integers $t$ and $r$ such that $t<r\leq k^3+1$ and
  \begin{align*}
    &\Bigl(\phi\bigl(\lambda_{t}(x,y,z,\ldots ,z^{t})\bigr),
      \phi\bigl(\rho_{t}(x,y,z,\ldots ,z^{t})\bigr),\phi(z^{t+1})\Bigr)\\
    &=\Bigl(\phi\bigl(\lambda_{r}(x,y,z,\ldots ,z^{r})\bigr),
      \phi\bigl(\rho_{r}(x,y,z,\ldots ,z^{r})\bigr),\phi(z^{r+1})\Bigr).
  \end{align*}
  Let $s$ be a positive integer such that $t\leq s(r-t)< r$. Then, the
  equality
  \begin{displaymath}
    \bigl(\phi(\overline{\lambda}_{n!}),\phi(\overline{\rho}_{n!})\bigr)
    =\bigl(\phi(\overline{\lambda}_{(n+1)!}),\phi(\overline{\rho}_{(n+1)!})\bigr)
  \end{displaymath}
  holds for every $n>s(r-t)$. The result follows.
\end{proof}

We denote by $(\lambda_{\mathsf{PE}},\rho_{\mathsf{PE}})$ the limit of the sequence $(\overline{\lambda}_{n!},\overline{\rho}_{n!})_{n}$  in the preceding proposition.

\begin{thm} \label{PS PE}
The following equality holds
\begin{center}$\mathsf{PE}=\mathsf{TM}\cap\llbracket \phi(\lambda_{\mathsf{PE}})=\phi(\rho_{\mathsf{PE}})\rrbracket$\end{center} 
where $\phi$ is the continuous endomorphism of the free profinite semigroup on $\{x,y,z\}$ such that $ \phi(x)=xzzx$, $\phi(y)=zxxz$, 
$\phi(z)=z$.
\end{thm}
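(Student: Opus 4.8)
The plan is to establish the two inclusions separately, using Theorem~\ref{PE F_7} as the characterization of $\mathsf{PE}$ on one side and a direct substitution argument on the other. Throughout, write $(\lambda_{\mathsf{PE}},\rho_{\mathsf{PE}})$ for the limit furnished by Proposition~\ref{PS PE identity}, so that for every finite semigroup $S$ and continuous homomorphism $\psi\colon\overline{\Omega}_{\{x,y,z\}}\mathsf{S}\to S$ there is an integer $n_0$ (depending on $|S|$) with $\psi(\lambda_{\mathsf{PE}})=\psi(\lambda_{n!})$ and $\psi(\rho_{\mathsf{PE}})=\psi(\rho_{n!})$ for all $n\geq n_0$; this is the key computational handle on the otherwise opaque limit pseudoword.

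For the inclusion $\mathsf{PE}\subseteq\mathsf{BG_{nil}}\cap\llbracket\phi(\lambda_{\mathsf{PE}})=\phi(\rho_{\mathsf{PE}})\rrbracket$: first, $\mathsf{PE}\subseteq\mathsf{BG_{nil}}$ is immediate from the Jespers--Riley description recalled just before Theorem~\ref{PE F_7} (all non-null principal factors are inverse with nilpotent subgroups). Second, let $S\in\mathsf{PE}$ and let $\psi$ be a continuous homomorphism onto $S$; I must show $\psi(\phi(\lambda_{\mathsf{PE}}))=\psi(\phi(\rho_{\mathsf{PE}}))$. Set $a=\psi\phi(x)=\psi(xzzx)$, $b=\psi\phi(y)=\psi(zxxz)$, $c=\psi\phi(z)=\psi(z)$, and note $\psi\circ\phi$ is again a continuous homomorphism into $S$, so for $n$ large it sends $\lambda_{\mathsf{PE}},\rho_{\mathsf{PE}}$ to $\lambda_{n!}(a,b,c,c^2,\ldots,c^{n!})$ and $\rho_{n!}(a,b,c,c^2,\ldots,c^{n!})$. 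Now the crucial observation is that, because of the two "padding" letters in $\phi(x)=xzzx$ and $\phi(y)=zxxz$, substituting $a$ and $b$ for $x,y$ in the $\lambda_{n!}/\rho_{n!}$ word reproduces exactly an instance of the positively Engel identity $\lambda_{m}(a',b',1,1,c,c^2,\ldots,c^{m-2})=\rho_{m}(a',b',1,1,c,c^2,\ldots,c^{m-2})$ with $m=n!+2$ and $a',b'$ suitable elements of $S^1$ (one reads $xzzx=\lambda_2(x,y,1,?)$-type reshuffling off the recursion $\lambda_{n+1}=\lambda_n z_{n+1}\rho_n$); I expect this bookkeeping — matching the $zz$ blocks to the two leading $1$'s in the PE-identity and the exponents $z,z^2,\ldots$ to $c,c^2,\ldots$ — to be the main obstacle, and it is essentially the same computation already used in the proof of Theorem~\ref{PE F_7}. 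Since $S\in\mathsf{PE}$, that identity holds for all $m$ above the Engel bound of $S$, hence for all large $n$, giving $\psi\phi(\lambda_{\mathsf{PE}})=\psi\phi(\rho_{\mathsf{PE}})$.

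For the reverse inclusion: suppose $S\in\mathsf{BG_{nil}}$ but $S\notin\mathsf{PE}$. By Theorem~\ref{PE F_7}, $F_7\prec S$, so there are a subsemigroup $U\leq S$ and an onto homomorphism $\theta\colon U\to F_7$. Pick $a,w\in U$ with $\theta(a)=(1;1,1)$ and $\theta(w)=u$ (the order-two element acting as the transposition $(1,2)$). Define a continuous homomorphism $\psi\colon\overline{\Omega}_{\{x,y,z\}}\mathsf{S}\to U\hookrightarrow S$ by $\psi(x)=a$, $\psi(y)=w$, $\psi(z)=w$ (or another convenient choice making $\psi\phi$ behave well). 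Then $\psi\phi(x)=awwa=a w^2 a$, $\psi\phi(y)=waaw$, $\psi\phi(z)=w$, and one checks, again via the recursion and the fact that in $F_7$ the letter $u$ toggles the two $\mathcal L$-classes while $(1;1,1)$ is idempotent, that $\theta$ distinguishes $\psi\phi(\lambda_{n!})$ from $\psi\phi(\rho_{n!})$ for every $n$ — indeed this is exactly the computation $\lambda_n(a,w,1,w_1,w_2,\ldots)\neq\rho_n(a,w,1,w_1,w_2,\ldots)$ in $F_7$ invoked at the end of the proof of Theorem~\ref{PE F_7}. Passing to the limit (or rather: since the values are eventually constant and differ at a fixed finite stage, they differ at the limit as well), $\psi\phi(\lambda_{\mathsf{PE}})\neq\psi\phi(\rho_{\mathsf{PE}})$, so $S\notin\llbracket\phi(\lambda_{\mathsf{PE}})=\phi(\rho_{\mathsf{PE}})\rrbracket$. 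This proves $\mathsf{BG_{nil}}\cap\llbracket\phi(\lambda_{\mathsf{PE}})=\phi(\rho_{\mathsf{PE}})\rrbracket\subseteq\mathsf{PE}$ and completes the argument. The delicate point throughout is the alignment of the doubled padding letter $zz$ in $\phi$ with the two $1$'s that sit in front of the $c$-powers in the definition of positively Engel, together with the factorial normalization of exponents handled by Proposition~\ref{PS PE identity}; everything else is the standard translation between pseudoidentities on $\overline{\Omega}_A\mathsf{S}$ and divisibility of finite semigroups.
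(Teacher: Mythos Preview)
Your proposal is correct and follows essentially the same route as the paper. Both directions hinge on the observation that $\phi(x)=xzzx=\lambda_2(x,z,1,1)$ and $\phi(y)=zxxz=\rho_2(x,z,1,1)$, so that $\phi(\lambda_{n!})=\lambda_{n!+2}(x,z,1,1,z,z^2,\ldots,z^{n!})$, which is precisely the positively Engel word with the second variable equal to the base of the power sequence; the paper compresses your forward inclusion into the single sentence ``follows at once from the definition,'' and for the reverse inclusion it invokes Theorem~\ref{PE F_7} and an evaluation in $F_7$ exactly as you do (the paper takes the preimage of $(1;1,2)$ rather than $(1;1,1)$, but this is immaterial).
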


\begin{proof}
  Suppose that $S\in \mathsf{TM}\cap\llbracket
  \phi(\lambda_{\mathsf{PE}})=\phi(\rho_{\mathsf{PE}})\rrbracket$ and
  $F_7\prec S$. Then there exist a subsemigroup $U$ of $S$ and an onto
  homomorphism $\psi:U\rightarrow F_7$. Let $a,b$ be elements of $U$
  such that $\psi(a)=(1;1,1)$ and $\psi(b)=u$. Then, we have
  $\lambda_n(a,b,1,1,b,b^2,\ldots,b^{n-2})\neq\rho_n(a,b,1,1,b,b^2,\ldots,b^{n-2})$
  for every positive integer $n$, which contradicts the assumption
  that $S$ satisfies the pseudoidentity
  $\phi(\lambda_{\mathsf{PE}})=\phi(\rho_{\mathsf{PE}})$. Therefore,
  by Theorem~\ref{PE F_7}, $S\in \mathsf{PE}$.

  The converse follows at once from the definition of the
  pseudovariety $\mathsf{PE}$.
\end{proof}

\section{The pseudovariety $\pv{NMN}_{4}$}
\label{sec:pvy-pvNMN_4}

We denote by $F_{12}$ the 12-element subsemigroup of the full
transformation semigroup on the set $\{1,2,3\} \cup \{\theta\}$ given
by the union of the Brandt semigroup $\mathcal{B}_3(\{1\})$ with the
semigroup $\langle w_1,w_2\rangle$,
\begin{equation*}
%  \label{eq:F12}
  F_{12} = \mathcal{B}_3(\{1\}) \cup \langle w_1,w_2\rangle,
\end{equation*}
where $w_1=(1)(3,2,\theta)$ and $w_2=(3,1,2,\theta)$. By \cite[Lemma
3.2]{Jes-Sha3}, the semigroup $F_{12}$ is not nilpotent but the
subsemigroup $\langle w_1,w_2 \rangle$ is nilpotent.

Let $S$ be a semigroup in which $\mathcal{B}_n(G)$ is a proper ideal.
We define the functions $\epsilon_1$ and $\epsilon_2$ from $S$ to the
power set $\mathcal{P}(\{1,\ldots,n\})$ of the set $\{1,\ldots,n\}$
%, where $\mathcal{P}(\{1,\ldots,n\})$ is the power set of the set $\{1,\ldots,n\}$, 
as follows:
\begin{align*}
  \epsilon_1(w)
  &=\{i\mid 1\leq i\leq n\mbox{ and }\Gamma(w)(i)\neq \theta\},\\
  \epsilon_2(w)
  &=\{i\mid 1\leq i\leq n
    \mbox{ and there exists an integer $j$ such that } \Gamma(w)(j)=i\}
\end{align*}
for every $w\in S$. We also define the functions $\iota_1$ and
$\iota_2$ from $S$ to $\mathbb{N}$ as follows:
\begin{displaymath}
  \iota_1(w)=\vert\epsilon_1(w)\vert \mbox{ and }\iota_2(w)=\vert\epsilon_2(w)\vert
\end{displaymath}
for every $w\in S$.

Let $\mathsf{Excl}(F_7,F_{12})$ denote the class of all finite
semigroups $S$ that are divisible neither by $F_7$ nor by $F_{12}$. We
say that the semigroup $S$ is \emph{$\M$} if $S\in\mathsf{BG_{nil}}$
and $S\in \mathsf{Excl}(F_7,F_{12})$. We proceed with several
technical lemmas that serve to show that the class $\mathsf{\M}$ of
all finite $\M$ semigroups is a pseudovariety. In view of the
definition and Theorem~\ref{PE F_7}, $\mathsf{\M}$ is contained in the
pseudovariety $\mathsf{PE}$. It amounts to a routine calculation (see
the appendix) to verify that the semigroup $F_{12}$ satisfies the
identity $\lambda_{3}(a,b,1,1,c) =\rho_{3} (a,b,1,1,c)$, for $a,b\in
F_{12}$ and $c\in F_{12}^{1}$. Hence, we have $F_{12} \in
\mathsf{PE}$. Therefore, as $F_{12}$ is not $\M$, the pseudovariety $\mathsf{\M}$
is strictly contained in $\mathsf{PE}$.
 
\begin{lem}
  \label{iota}
  Let $S$ be a semigroup admitting $\mathcal{B}_n(G)$ as a proper
  ideal and let $w$ be an element of $S$. Suppose
  that
  \begin{displaymath}
    (a_1,\ldots,a_m,\theta)\subseteq\Gamma(w)
  \end{displaymath}
  for some integers $1\leq a_1,\ldots,a_m\leq n$ with $m>1$. Then, we
  have
  \begin{displaymath}
    \max\{\iota_1(w^py),\iota_1(yw^p),\iota_2(w^py),\iota_2(yw^p)\}
    <\min\{\iota_1(w),\iota_2(w)\}
  \end{displaymath}
  for every element $y\in S^1$ and integer $p>1$.
\end{lem}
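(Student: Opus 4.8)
The plan is to derive the statement from two facts about the transformations $\Gamma(s)$, $s\in S$, of the set $\{1,\ldots,n\}\cup\{\theta\}$. The first is that $\Gamma(s)$ is injective on $\epsilon_{1}(s)$ for every $s\in S$; since $\epsilon_{2}(s)$ is exactly the image of $\epsilon_{1}(s)$ under $\Gamma(s)$, this yields $\iota_{1}(s)=\iota_{2}(s)$ for all $s\in S$. The second is that the prescribed orbit $(a_{1},\ldots,a_{m},\theta)$ with $m>1$ exhibits a point of $\epsilon_{1}(w)$ that $\Gamma(w)$ already sends outside $\epsilon_{1}(w)$, which forces the non-$\theta$ domain of $\Gamma(w)$ to shrink strictly upon iteration. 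Throughout I use that, since $\Gamma$ is a homomorphism, $\Gamma(w^{p}y)$ acts by first applying $\Gamma(w)$ $p$ times and then $\Gamma(y)$, and $\Gamma(yw^{p})$ by first applying $\Gamma(y)$ and then $\Gamma(w)$ $p$ times.

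For the first fact --- a basic property of this action, stemming from the sandwich matrix being $I_{n}$ and from $M=\mathcal{M}^{0}(G,n,n;I_{n})$ being an ideal --- suppose $\Gamma(s)(j_{1})=\Gamma(s)(j_{2})=j'\in\{1,\ldots,n\}$ with $j_{1}\neq j_{2}$. A short computation in Rees coordinates gives $(1;j_{1},j_{1})\,s=(h_{1};j_{1},j')$, $(1;j_{2},j_{2})\,s=(h_{2};j_{2},j')$ and $s\,(1;j',j_{1})=(h_{1};j_{1},j_{1})$ for suitable $h_{1},h_{2}\in G$. Then $(1;j_{1},j_{2})\,s\,(1;j',j_{1})$ equals $(h_{2};j_{1},j_{1})\neq\theta$ when bracketed as $\bigl((1;j_{1},j_{2})s\bigr)(1;j',j_{1})$, but equals $\theta$ when bracketed as $(1;j_{1},j_{2})\bigl(s(1;j',j_{1})\bigr)$, because $p_{j_{2}j_{1}}=\theta$ --- a contradiction. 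Hence $\Gamma(s)$ is injective on $\epsilon_{1}(s)$, so $\iota_{1}(s)=\iota_{2}(s)$ for every $s\in S$. Put $d=\iota_{1}(w)=\iota_{2}(w)$, and note that $d\geq 1$, since $\Gamma(w)(a_{1})=a_{2}\neq\theta$ places $a_{1}$ in $\epsilon_{1}(w)$.

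For the second fact, note that because $\theta$ is fixed by every $\Gamma(t)$, an element survives $p$ applications of $\Gamma(w)$ only if it survives two, so $\epsilon_{1}(w^{p})\subseteq\epsilon_{1}(w^{2})\subseteq\epsilon_{1}(w)$ for all $p\geq 2$. The inclusion $\epsilon_{1}(w^{2})\subseteq\epsilon_{1}(w)$ is proper: as $m>1$ the point $a_{m-1}$ is defined, it lies in $\epsilon_{1}(w)$ because $\Gamma(w)(a_{m-1})=a_{m}\neq\theta$, but it does not lie in $\epsilon_{1}(w^{2})$ because one further application gives $\Gamma(w)(a_{m})=\theta$. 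Hence $\iota_{1}(w^{p})\leq\iota_{1}(w^{2})<\iota_{1}(w)=d$ for every $p>1$, and $\iota_{2}(w^{p})=\iota_{1}(w^{p})$ by the first fact applied to the element $w^{p}$.

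It remains to bound the four quantities. Fix $y\in S^{1}$ and an integer $p>1$. If $\Gamma(w^{p}y)$ does not send $i$ to $\theta$, then neither do the first $p$ applications of $\Gamma(w)$, so $\epsilon_{1}(w^{p}y)\subseteq\epsilon_{1}(w^{p})$ and $\iota_{1}(w^{p}y)\leq\iota_{1}(w^{p})$. The image of $\Gamma(w^{p}y)$ inside $\{1,\ldots,n\}$ is contained in the $\Gamma(y)$-image of $\epsilon_{2}(w^{p})$, whence $\iota_{2}(w^{p}y)\leq\iota_{2}(w^{p})$. The image of $\Gamma(yw^{p})$ inside $\{1,\ldots,n\}$ is contained in the image of $\{1,\ldots,n\}$ under $p$ applications of $\Gamma(w)$ with $\theta$ removed, namely $\epsilon_{2}(w^{p})$, so $\iota_{2}(yw^{p})\leq\iota_{2}(w^{p})$; and $\iota_{1}(yw^{p})=\iota_{2}(yw^{p})$ by the first fact applied to $yw^{p}\in S$. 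Combining these with the previous paragraph, each of $\iota_{1}(w^{p}y)$, $\iota_{1}(yw^{p})$, $\iota_{2}(w^{p}y)$, $\iota_{2}(yw^{p})$ is at most $\iota_{1}(w^{p})=\iota_{2}(w^{p})<d=\min\{\iota_{1}(w),\iota_{2}(w)\}$, which is the assertion. The one step that needs genuine care is the injectivity of $\Gamma(s)$ on $\epsilon_{1}(s)$ (equivalently, $\iota_{1}=\iota_{2}$ on $S$); once that is granted, the rest is routine bookkeeping on domains and images of composites of transformations, together with the single observation that the given orbit furnishes a point of $\epsilon_{1}(w)$ that leaves $\epsilon_{1}(w)$ after one application of $\Gamma(w)$.
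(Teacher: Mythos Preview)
Your proof is correct, and it takes a genuinely different route from the paper's.

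The paper handles the four quantities $\iota_1(w^py)$, $\iota_1(yw^p)$, $\iota_2(w^py)$, $\iota_2(yw^p)$ separately: for each one it builds an explicit injection into $\epsilon_1(w)$ (for instance, sending $a\in\epsilon_1(yw^p)$ to $\Gamma(yw^{p-1})(a)\in\epsilon_1(w)$) and then exhibits a specific point of $\epsilon_1(w)$ missed by that injection (either $a_1$ or $a_{m-1}$). The comparison with $\iota_2(w)$ is then dismissed with a ``similarly''.

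Your approach is more structural. You first establish once and for all that $\Gamma(s)$ is injective on $\epsilon_1(s)$ for every $s\in S$ (a consequence of the sandwich matrix being $I_n$ and $M$ being an ideal; the paper records this fact in its description of $M\cup^{\Delta}T$ and uses it later in Theorem~\ref{A MN}, but does not invoke it here). This immediately gives $\iota_1=\iota_2$ on $S$, so $\min\{\iota_1(w),\iota_2(w)\}$ is simply $\iota_1(w)$, and the four target quantities coincide pairwise. Everything then reduces to the single strict inclusion $\epsilon_1(w^p)\subsetneq\epsilon_1(w)$ for $p>1$, witnessed by $a_{m-1}$, together with the trivial domain/image inclusions $\epsilon_1(w^py)\subseteq\epsilon_1(w^p)$ and $\epsilon_2(yw^p)\subseteq\epsilon_2(w^p)$.

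What each approach buys: the paper's argument is entirely self-contained and never needs the injectivity property, at the cost of four nearly identical constructions. Your argument isolates the reason the lemma holds (the equality $\iota_1=\iota_2$ trivializes half the statement), is shorter, and makes the role of the hypothesis $m>1$ transparent.
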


\begin{proof}
  Since $\epsilon_1(w^py)\subseteq\epsilon_1(w)$ and
  $a_{m-1}\not\in\epsilon_1(w^py)$, we obtain
  $\iota_1(w^py)<\iota_1(w)$.

  There is a one-to-one function $\phi$ from $\epsilon_1(yw^p)$ to
  $\epsilon_1(w)$ as follows: for elements $a\in \epsilon_1(yw^p)$ and
  $b\in\epsilon_1(w)$, let $\phi(a)=b$ if $\Gamma(yw^{p-1})(a)=b$. We
  deduce that $\iota_1(yw^p)\leq\iota_1(w)$. Since $m>1$, we have
  $a_{1}\in \epsilon_1(w)$. Hence, we obtain
  $\iota_1(yw^p)<\iota_1(w)$.

  There is a one-to-one function $\phi$ from $\epsilon_2(w^py)$ to
  $\epsilon_1(w)$ as follows: for elements $a\in \epsilon_2(w^py)$ and
  $b\in\epsilon_1(w)$, let $\phi(a)=b$ if $\Gamma(wy)(b)=a$. We deduce
  that $\iota_2(w^py)\leq\iota_1(w)$. Since $a_{1}\in \epsilon_1(w)$,
  we obtain $\iota_2(w^py)<\iota_1(w)$.

  There is a one-to-one function $\phi$ from $\epsilon_2(yw^p)$ to
  $\epsilon_1(w)$ as follows: for elements $a\in \epsilon_2(yw^p)$ and
  $b\in\epsilon_1(w)$, let $\phi(a)=b$ if $\Gamma(w)(b)=a$. We deduce
  that $\iota_2(yw^p)\leq\iota_1(w)$. Since $a_1\in \epsilon_1(w)$, we
  obtain $\iota_2(yw^p)<\iota_1(w)$.

%\dd{There is a one-to-one function $\phi$ from $\epsilon_1(w^py)$ to $\epsilon_2(w)$ as follows:
%for elements $a\in \epsilon_1(w^py)$ and $b\in\epsilon_2(w)$, let
%$\phi(a)=b$ if $\Gamma(w)(a)=b$. We deduce that $\iota_1(w^py)\leq\iota_2(w)$. 
%Since $a_{m}\in \epsilon_2(w)$, we obtain $\iota_1(w^py)<\iota_2(w)$.}
%
%
%\dd{There is a one-to-one function $\phi$ from $\epsilon_1(yw^p)$ to $\epsilon_2(w)$ as follows:
%for elements $a\in \epsilon_1(w^py)$ and $b\in\epsilon_2(w)$, let
%$\phi(a)=b$ if $\Gamma(yw)(a)=b$. We deduce that $\iota_1(yw^p)\leq\iota_2(w)$. 
%Since $a_{m}\in \epsilon_2(w)$, we obtain $\iota_1(yw^p)<\iota_2(w)$.}
%
%\dd{Since $\epsilon_2(yw^p)\subseteq\epsilon_2(w)$ and $a_{2}\not\in\epsilon_2(yw^p)$, we obtain $\iota_2(yw^p)<\iota_2(w)$. }
%
%\dd{There is a one-to-one function $\phi$ from $\epsilon_2(w^py)$ to $\epsilon_2(w)$ as follows:
%for elements $a\in \epsilon_1(w^py)$ and $b\in\epsilon_2(w)$, let
%$\phi(a)=b$ if $\Gamma(wy)(b)=a$. We deduce that $\iota_2(w^py)\leq\iota_2(w)$. 
%Since $a_{m}\in \epsilon_2(w)$, we obtain $\iota_2(yw^p)<\iota_2(w)$.}

  Similarly, we have
  $\iota_1(w^py),\iota_1(yw^p),\iota_2(w^py),\iota_2(yw^p)<\iota_2(w)$.
\end{proof}

\begin{lem}
  \label{m,l}
  Let $S$ be a finite semigroup. Suppose that there exist a proper
  ideal $S'$ and an element $w$ of $S$ that satisfy the following
  properties:
  \begin{enumerate}[label=(\roman*)]
  \item the semigroup $M = \mathcal{B}_n(G)$ is a proper ideal of
    $S/S'$ for some finite group $G$ and integer $n\geq 2$;
  \item $(m,l) \subseteq \Gamma(w)$ for some distinct positive
    integers $1\leq l,m\leq n$ where $\Gamma$ is an \C\ of $S/S'$.
  \end{enumerate}
  Then, we must have $F_{7}\prec S$.
\end{lem}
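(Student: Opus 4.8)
The plan is to build, inside a suitable subsemigroup of $S$, a copy of $F_7$ by combining an idempotent living over the Rees matrix ideal $M$ with an element whose $\mathcal{L}$-minimal image induces the transposition $(m,l)$. First I would pass to the Rees matrix setup: since $(m,l)\subseteq\Gamma(w)$, there exist $g\in G$ and an index $i$ with $(g;i,m)\,w = (g';i,l)$ for some $g'\in G$, and symmetrically $\Gamma(w)$ moves $l$ back along its orbit. Because $n\geq 3$, there is at least one index $k\notin\{l,m\}$, and the idempotents $(1;m,m)$, $(1;l,l)$ of $M$ (after adjusting by the sandwich matrix, which is the identity) are available. The idea is that the pair of $\mathcal{H}$-classes at rows/columns $m$ and $l$, together with the transposition $(m,l)$ realized by a power of $w$, reproduces the multiplication table of $F_7 = \mathcal{M}^0(\{1\},2,2;I_2)\cup^{\Gamma}\{1,u,\theta\}$ with $\Gamma(u)=(1,2)$.

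Next I would make the element $w$ behave like $u$. Replacing $w$ by an appropriate power $w^{\omega+1}$ (or $w^{p}$ for a suitable $p$), one arranges that $\Gamma(w^{p})$ has order $2$ on $\{l,m\}$ — i.e.\ restricts to the transposition $(l,m)$ — and that $w^{2p}$ acts as the identity on the relevant part; here one uses that $w$ has finite order in the finite quotient $S/S'$ and that on the orbit containing $(m,l)$ the map $\Gamma(w)$ is (the relevant restriction of) a permutation, so some power is an involution on $\{l,m\}$. Care is needed because $\Gamma(w)$ may send other points to $\theta$; but that only helps, since it pushes unwanted elements into the kernel that will be collapsed to $\theta$ in the homomorphism onto $F_7$. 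Then I would consider the subsemigroup $U$ generated by, say, $(1;m,m)$ and $v := w^{p}$ (or by $v$ together with one nonzero element of the $m$-th $\mathcal{L}$-class), intersected with the ideal generated by $M$ so that everything outside $M\cup\langle v\rangle$ falls to $\theta$.

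Finally I would define the homomorphism $\psi\colon U\to F_7$. Send the four $\mathcal{H}$-classes of $M$ indexed by rows/columns in $\{m,l\}$ to the four nonzero $\mathcal{H}$-classes of $\mathcal{M}^0(\{1\},2,2;I_2)$ via $m\mapsto 1$, $l\mapsto 2$ (collapsing the group $G$ to the trivial group and all other nonzero elements of $M$ to $\theta$), send $v^{2p\gamma}$ to $1$ and $v^{(2\gamma+1)p}$ to $u$, and send everything in the smaller ideal to $\theta$. The verification that $\psi$ is a well-defined semigroup homomorphism is the routine-but-careful core: one checks products of the form (matrix unit)$\cdot v$, $v\cdot$(matrix unit), and (matrix unit)$\cdot v\cdot$(matrix unit), using that the sandwich matrix is $I_n$ so that $p_{m,m}=p_{l,l}$ are the identity and $p_{m,l}=p_{l,m}=\theta$, which is exactly what makes the $F_7$ multiplication work. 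I expect the main obstacle to be precisely this bookkeeping: ensuring that no ``stray'' products — e.g.\ ones that leave the rows/columns $\{m,l\}$, or that $\Gamma(w)$ sends to $\theta$ prematurely — break homomorphism-ness, and choosing the power $p$ so that $v$ genuinely induces a $2$-cycle on $\{l,m\}$ rather than a longer cycle or a partial map. Once $\psi$ is shown to be an onto homomorphism, $F_7\prec U\prec S$ follows immediately.
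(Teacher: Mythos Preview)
Your proposal is correct and follows essentially the same route as the paper: exhibit a subsemigroup generated by a matrix idempotent at one of the indices $\{l,m\}$ together with $w$, and define an onto homomorphism to $F_7$ by reading off how each element acts on $\{l,m\}$.

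Two small points where you are working harder than necessary. First, the step of replacing $w$ by a power $w^p$ is superfluous: in the paper's notation, $(m,l)\subseteq\Gamma(w)$ means that the $2$-cycle $(m,l)$ is already one of the disjoint orbits of $\Gamma(w)$, so $\Gamma(w)$ itself swaps $m$ and $l$ and $\Gamma(w^2)$ fixes both. There is no ``longer cycle'' to worry about; you can take $p=1$ from the start. Second, the paper defines $\psi$ uniformly on the whole subsemigroup $Z=\langle (1;l,l),\,w\rangle$ by looking at the restriction of $\Gamma(z)$ to $\{l,m\}$ (which, for $z\in Z$, is always a partial bijection, hence lands in one of seven cases corresponding to the seven elements of $F_7$). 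This avoids splitting into ``matrix elements'' versus ``powers of $v$'' and makes the homomorphism check immediate, since $\Gamma$ is already a homomorphism. Your case split gives the same map, but the unified description via $\Gamma$ is what makes the verification routine rather than the ``main obstacle'' you anticipate. Incidentally, neither you nor the paper actually uses the hypothesis $n\ge3$ here; it is only needed in the companion Lemma~\ref{m,l,m'}.
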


\begin{proof}
  We claim that there is an onto homomorphism from the subsemigroup
  $Z=\langle (1;l,l),w \rangle$ of $S$ to $F_{7}$. Define the function
  $\psi:Z\rightarrow F_7$ as follows:
  \begin{itemize}
  \item if $z\in S'$ or $\Gamma(z)(l)=\Gamma(z)(m)=\theta$ then $\psi(z)=\theta$;
  \item if $(m,l) \subseteq \Gamma(z)$ then $\psi(z)=u$;
  \item if $(m)(l)\subseteq \Gamma(z)$ then $\psi(z)=1$; 
  \item if $(l) \subseteq \Gamma(z)$ and $\Gamma(z)(m)=\theta$ then $\psi(z)=(1;1,1)$; 
  \item if $(l,m,\theta) \subseteq \Gamma(z)$ then $\psi(z)=(1;1,2)$; 
  \item if $(m,l,\theta) \subseteq \Gamma(z)$ then $\psi(z)=(1;2,1)$;
  \item if $(m) \subseteq \Gamma(z)$ and $\Gamma(z)(l)=\theta$ then $\psi(z)=(1;2,2)$.
  \end{itemize}
  Since $\Gamma$ is an \C\ $S/S'$, $\psi$ is a homomorphism.
\end{proof}

\begin{lem}
  \label{m,l,m'}
  Let $S$ be a finite semigroup. Suppose that there exist a proper
  ideal $S'$ and elements $w_1$ and $w_2$ of $S$ that satisfy the
  following properties:
  \begin{enumerate}[label=(\roman*)]
  \item the semigroup $M = \mathcal{B}_n(G)$ is a proper ideal of
    $S/S'$ for some finite group $G$ and integer $n\geq 3$;
  \item the relations $[m,l,m'] \sqsubseteq \Gamma(w_1)$, $[m, m']
    \sqsubseteq \Gamma(w_2)$, and $(l)\subseteq \Gamma(w_2)$ hold for
    some pairwise distinct positive integers $1\leq l,m,m'\leq n$
    where $\Gamma$ is an \C\ of $S/S'$.
  \end{enumerate}
  Then, at least one of the conditions $F_{7}\prec S$ or $F_{12}\prec
  S$ holds.
\end{lem}

\begin{proof}
  Suppose that the integer $m'$ is in a cycle of $\Gamma(w_1)$ with
  length $L_{w_1}$. Then $(l,m')\subseteq \Gamma(w_1^{L_{w_1}-1}w_2)$.
  Hence, by Lemma~\ref{m,l}, there is an onto homomorphism from the
  subsemigroup $\langle (1;l,l),w_1^{L_{w_1}-1}w_2 \rangle$ of $S$ to
  $F_{7}$. Suppose next that the integer $m'$ is in a cycle of
  $\Gamma(w_2)$ with length $L_{w_2}$. Then $(l,m)\subseteq
  \Gamma(w_1w_2^{L_{w_2}-1})$. Hence, by Lemma~\ref{m,l}, there is an
  onto homomorphism from the subsemigroup $\langle
  (1;l,l),w_1w_2^{L_{w_2}-1} \rangle$ of $S$ to $F_{7}$.

  Now, suppose that the integer $m'$ is in non-cycle orbits of both
  $\Gamma(w_1)$ and $\Gamma(w_2)$. Then the following set $\Sigma$ is
  not empty:
  \begin{align*}
    \Sigma= \bigl\{
    &(y_1,y_2;a,b,c)\mid y_1,y_2\in S,1\leq a,b,c\leq n,[a,c,b]
      \sqsubseteq \Gamma(y_1), [a, b] \sqsubseteq \Gamma(y_2),\\
    &(c)\subseteq \Gamma(y_2),\mbox{ the orbit that contains $[a,c,b]$
      in $\Gamma(y_1)$  is not a cycle}\\ 
    &\mbox{and the orbit that contains $[a,b]$ in $\Gamma(y_2)$ is not a
      cycle}
      \bigr\}.
  \end{align*}
  Since $S$ is finite, there exists an element
  $\Delta=(y_1,y_2;a,b,c)\in \Sigma$ such that there does not exist an
  element $\Delta'=(u_1,u_2;e,f,d)\in \Sigma$ for which
  $\iota_i(u_k)<\iota_i(y_k)$ for every $i,k\in\{1,2\}$.
%$\iota_1(u_1)<\iota_1(y_1),\iota_2(u_1)<\iota_2(y_1),\iota_1(u_2)<\iota_1(y_2)$ and $\iota_2(u_2)<\iota_2(y_2)$.

  Since $\Delta\in \Sigma$, there exist integers
  \begin{displaymath}
    1\leq
    a_1,\ldots,a_t,a'_1,\ldots,a'_{t'},b_1,\ldots,b_r,b'_1,\ldots,b'_{r'}\leq 
    n
  \end{displaymath}
  such that
  \begin{displaymath}
    (a_1,\ldots,a_t,c,b_1,\ldots,b_r,\theta)
    \subseteq \Gamma(y_1),\
    (c)(a'_1,\ldots,a'_{t'},b'_1,\ldots,b'_{r'},\theta)
    \subseteq \Gamma(y_2),
  \end{displaymath}
  $a_t=a'_{t'}=a$ and $b_1=b'_{1}=b$. Suppose that
  $\{a_1,\ldots,a_t\}\cap\{b'_1,\ldots,b'_{r'}\}\neq\emptyset$. Then,
  there exist integers $1\leq \gamma \leq t$ and $1\leq \lambda \leq
  r'$ such that $a_{\gamma}=b'_{\lambda}$. Thus, we obtain
  $(a_{\gamma})\subseteq \Gamma(y_1^{t-\gamma }y_2^{\lambda})$,
  $[a_{\gamma+1},c] \sqsubseteq \Gamma(y_1^{t-\gamma }y_2^{\lambda})$
  and $[a_{\gamma+1},a_{\gamma},c] \sqsubseteq
  \Gamma(y_1^{t-\gamma+1}y_2^{\lambda-1})$. If the integer $c$ is in a
  cycle of $\Gamma(y_1^{t-\gamma }y_2^{\lambda})$ or
  $\Gamma(y_1^{t-\gamma+1}y_2^{\lambda-1})$, then similarly there is
  an onto homomorphism from a subsemigroup of $S$ to $F_{7}$.
  Otherwise, the integer $c$ is in non-cycle orbits of
  $\Gamma(y_1^{t-\gamma+1}y_2^{\lambda-1})$ and $\Gamma(y_1^{t-\gamma
  }y_2^{\lambda})$. We conclude that
  $(y_1^{t-\gamma+1}y_2^{\lambda-1},y_1^{t-\gamma
  }y_2^{\lambda};a_{\gamma+1},c,a_{\gamma})\in \Sigma$. Since
  $a_{\gamma}\not\in\{a_t,b'_1\}$, we obtain $\gamma<t$ and $1<
  \lambda$. Then, by Lemma~\ref{iota}, we have
  \begin{align*}
    &\iota_1(y_1^{t-\gamma+1}y_2^{\lambda-1}) <\iota_1(y_1),\
      \iota_2(y_1^{t-\gamma+1}y_2^{\lambda-1}) <\iota_2(y_1), \\
    &\iota_1(y_1^{t-\gamma}y_2^{\lambda})<\iota_1(y_2),
      \mbox{ and }
      \iota_2( y_1^{t-\gamma } y_2^{\lambda})<\iota_2(y_2),
  \end{align*}
  a contradiction.

  Suppose next that
  $\{a'_1,\ldots,a'_{t'}\}\cap\{b_1,\ldots,b_r\}\neq\emptyset$. Then,
  there exist integers $1\leq \gamma \leq t'$ and $1\leq \lambda \leq
  r$ such that $a'_{\gamma}=b_{\lambda}$. Thus, we obtain
  $(b_{\lambda})\subseteq \Gamma(y_2^{t'-\gamma+1}y_1^{\lambda-1})$,
  $[c,b_{\lambda-1}] \sqsubseteq
  \Gamma(y_2^{t'-\gamma+1}y_1^{\lambda-1})$ and
  $[c,b_{\lambda},b_{\lambda-1}] \sqsubseteq
  \Gamma(y_2^{t'-\gamma}y_1^{\lambda})$. If the integer $c$ is in a
  cycle of $\Gamma(y_2^{t'-\gamma+1}y_1^{\lambda-1})$ or
  $\Gamma(y_2^{t'-\gamma}y_1^{\lambda})$, then similarly there is an
  onto homomorphism from a subsemigroup of $S$ to $F_{7}$. Otherwise,
  the integer $c$ is in non-cycle orbits of
  $\Gamma(y_2^{t'-\gamma}y_1^{\lambda})$ and
  $\Gamma(y_2^{t'-\gamma+1}y_1^{\lambda-1})$. It follows that
  $(y_2^{t'-\gamma}y_1^{\lambda},y_2^{t'-\gamma+1}y_1^{\lambda-1};
  c,b_{\lambda-1},b_{\lambda})\in \Sigma$. Since
  $a'_{\gamma}\not\in\{a'_{t'},b_1\}$, we obtain $\gamma<t'$ and $1<
  \lambda$. Then, by Lemma~\ref{iota}, we have
  \begin{align*}
    &\iota_1(y_2^{t'-\gamma}y_1^{\lambda})<\iota_1(y_1),\
      \iota_2(y_2^{t'-\gamma}y_1^{\lambda})<\iota_2(y_1),\\
    &\iota_1(y_2^{t'-\gamma+1}y_1^{\lambda-1})<\iota_1(y_2),
      \mbox{ and }
      \iota_2(y_2^{t'-\gamma+1}y_1^{\lambda-1})<\iota_2(y_2),
  \end{align*}
  which is again a contradiction.

  We may, therefore, assume that
  \begin{displaymath}
    \{a_1,\ldots,a_t\}\cap\{b'_1,\ldots,b'_{r'}\}=\emptyset
    \mbox{ and }
    \{a'_1,\ldots,a'_{t'}\}\cap\{b_1,\ldots,b_r\}=\emptyset.
  \end{displaymath}
  We denote by $F'_{12}$ the subsemigroup of the full transformation
  semigroup on the set $\{a_t,b_1,c\} \cup \{\theta'\}$ given by the
  union of the Brandt semigroup $\mathcal{B}_3(\{1\})$ with the
  semigroup $\langle w_1,w_2\rangle$,
  \begin{eqnarray}
    F'_{12} &=& \mathcal{B}_3(\{1\}) \cup^{\Gamma_{F'_{12}}}\langle w_1,w_2\rangle,
  \end{eqnarray}
  where $\Gamma_{F'_{12}}(w_1)=(c)(a_t,b_1,\theta')$ and
  $\Gamma_{F'_{12}}(w_2)=(a_t,c,b_1,\theta')$ and $\Gamma_{F'_{12}}$
  is an \C\ of $F'_{12}$. The semigroup $F'_{12}$ is generated by the
  elements $(1;a_t,b_1)$, $w_1$ and $w_2$. On the other hand, the
  semigroup $F_{12}$ is also generated by three elements, namely
  $(1;2,3)$, $(1)(2,3,\theta)$ and $(3,1,2,\theta)$ which are
  basically the same as for $F'_{12}$ up to renaming the elements of
  the set on which they act: $a_t\mapsto 2$, $b_1\mapsto 3$, and
  $c\mapsto 1$. Hence, the transformation semigroups $F'_{12}$ and
  $F_{12}$ are isomorphic. We claim that there is an onto homomorphism
  from the subsemigroup $Z=\langle y_1,y_2,(1;b_1,a_t)\rangle$ onto
  $F'_{12}$. Let $\psi:Z\rightarrow F'_{12}$ be the function defined
  as follows:
  \begin{enumerate}
  \item if $z\in S'$, then $\psi(z)=\theta'$;
  \item otherwise, $\Gamma(z)(i)=j$ if and only if
    $\Gamma'(\psi(z))(i)=j$ for every $i,j\in\{a_t,b_1,c\}$.
  \end{enumerate}
  We prove that $\psi$ is a homomorphism. Let $s_1,s_2\in Z$ and
  $i\in\{a_t,b_1,c\}$. Suppose that
  $\Gamma'(\psi(s_1))(\Gamma'(\psi(s_2))(i))=\alpha$ for some
  $\alpha\in\{a_t,b_1,c\}$. Then, we obtain
  $\Gamma'(\psi(s_1))(i)=\beta$ for some $\beta\in\{a_t,b_1,c\}$ and,
  thus, $\Gamma(s_1)(i)=\beta$ and $\Gamma(s_2)(\beta)=\alpha$, whence
  $\Gamma(s_1s_2)(i)=\alpha$. Now, suppose
  that $$\Gamma'(\psi(s_1))(\Gamma'(\psi(s_2)(i)))=\theta'.$$
  If $\Gamma'(\psi(s_1s_2))(i)\neq \theta'$, then there exists an
  integer $\gamma\not\in\{a_t,b_1,c\}$ such that
  $\Gamma(s_1)(i)=\gamma$ and $\Gamma(s_2)(\gamma)=\lambda$ for some
  integer $\lambda$ in the set $\{a_t,b_1,c\}$. Since $s_2\in Z$,
  there exist elements
  $v_1,\ldots,v_{n_{s_2}}\in\{y_1,y_2,(1;b_1,a_t)\}$ such that
  $s_2=v_1\cdots v_{n_{s_2}}$. Suppose that $\gamma\in
  \{b'_2,\ldots,b'_{r'}\}\cup\{b_2,\ldots,b_r\}$. Since
  \begin{displaymath}
    \{a_1,\ldots,a_t\}\cap\{b'_1,\ldots,b'_{r'}\}=\emptyset
    \mbox{ and }
    \{a'_1,\ldots,a'_{t'}\}\cap\{b_1,\ldots,b_r\}=\emptyset,
  \end{displaymath}
  we obtain $\Gamma(v_1)(\gamma)\in
  \{b'_2,\ldots,b'_{r'}\}\cup\{b_2,\ldots,b_r\}$. Similarly, we have
  \begin{displaymath}
    \Gamma(v_1\cdots v_{n_{s_2}})(\gamma)
    \in \{b'_2,\ldots,b'_{r'}\}\cup\{b_2,\ldots,b_r\}.
  \end{displaymath}
  This contradicts the assumption that $\Gamma(s_2)(\gamma)=\lambda$
  and $\lambda\in\{a_t,b_1,c\}$. Similarly, we obtain $\gamma\not\in
  \{a'_2,\ldots,a'_{r'}\}\cup\{a_2,\ldots,a_r\}$, which entails
  $\lambda\not \in\{a_t,b_1,c\}$, a contradiction. Hence,
  $\Gamma'(\psi(s_1s_2))(i)=\theta'$ and, thus, $\psi$ is a
  homomorphism.

  The result follows.
\end{proof}

\begin{lem}
  \label{F12 prec}
  Let $T$ and $V$ be finite block groups. If $F_{12}\prec T\times V$,
  then at least one of the conditions
  $T\not\in\mathsf{Excl}(F_7,F_{12})$ or
  $V\not\in\mathsf{Excl}(F_7,F_{12})$ holds.
\end{lem}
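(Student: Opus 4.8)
We sketch a proof strategy modeled on the proof that $F_7$ is $\times$-prime, refined by Lemmas~\ref{iota}, \ref{m,l} and \ref{m,l,m'}. Suppose $F_{12}\prec T\times V$, so there are a subsemigroup $U$ of $T\times V$ and an onto homomorphism $\phi\colon U\to F_{12}$; fix principal series of $T\times V$, of $T$ and of $V$. Recall that $F_{12}=\mathcal M\cup\langle w_1,w_2\rangle$ with $\mathcal M=\mathcal M^0(\{1\},3,3;I_3)$ its $0$-minimal ideal, and that the \C\ representation $\Gamma$ of $F_{12}$ satisfies $(1)[3,2]\sqsubseteq\Gamma(w_1)$ and $[3,1,2]\sqsubseteq\Gamma(w_2)$; thus the pair $(w_2,w_1)$ realizes the configuration of Lemma~\ref{m,l,m'} with $l=1$, $m=3$, $m'=2$. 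The plan is to transfer this configuration, or the transposition configuration of Lemma~\ref{m,l}, into $T$ or into $V$, and then apply the relevant lemma to conclude $F_7\prec T$ or $F_{12}\prec T$ (or the same for $V$), which gives $T\notin\mathsf{Exc}(F_7,F_{12})$ (resp.\ $V\notin\mathsf{Exc}(F_7,F_{12})$).

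First I would pull back a witness: choose an idempotent $r\in U$ with $\phi(r)=(1;1,1)$ and elements $a_1,a_2\in U$ with $\phi(a_1)=w_1$, $\phi(a_2)=w_2$, taken as deep as possible in the principal series of $T\times V$ (so that no relevant product drops into a strictly lower term than it must), together with preimages of the matrix units of $\mathcal M$ witnessing the connectivity of the $\mathcal J$-class of $\mathcal M$. As in the proof that $F_7$ is $\times$-prime, the principal factor $J$ of $r$ in $T\times V$ is then completely $0$-simple (it cannot be completely simple, since it maps onto the completely $0$-simple $\mathcal M$), $\phi^{-1}(\mathcal M\setminus\{\theta\})$ meets $J$ in a union of $\mathcal L$-classes, and $a_1,a_2$ act on those $\mathcal L$-classes by partial transformations compatible with $\Gamma$. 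Writing $r=(e,f)$ with $e,f$ idempotent and $a_k=(x_k,y_k)$, by \cite[Theorem~3]{Fab} the corresponding level of the principal series of $T\times V$ splits as the product of the corresponding levels of $T$ and of $V$; equivalently $J=J^T\times J^V$, where $J^T$ and $J^V$ are the $\mathcal J$-classes of $e$ in $T$ and of $f$ in $V$. Since $T,V$ are block groups, each of $J^T,J^V$ is a group or a Brandt semigroup $\mathcal M^0(G,n,n;I_n)$, so, after quotienting $T$ and $V$ by the ideals immediately below $J^T$ and $J^V$, one obtains \C\ representations $\Gamma^T$, $\Gamma^V$ of these quotients, with the action of $a_k$ on the $\mathcal L$-classes of $J$ equal to $\Gamma^T(x_k)\times\Gamma^V(y_k)$ read coordinatewise (a transformation of $\{1,2,3\}$ dropping to $\theta$ exactly when the $T$-coordinate drops, the $V$-coordinate drops, or the resulting pair leaves the embedded $3$-element set).

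The three $\mathcal L$-classes of $\mathcal M$ then correspond to three distinct pairs $(\sigma(i),\tau(i))$ of $\mathcal L$-classes of $J^T$ and $J^V$; write $n_T,n_V$ for their numbers of $\mathcal L$-classes, so $n_Tn_V\ge3$. If $n_T\ge3$ and $\sigma$ is injective on $\{1,2,3\}$, the configuration transfers verbatim: $(\sigma(1))[\sigma(3),\sigma(2)]\sqsubseteq\Gamma^T(x_1)$ and $[\sigma(3),\sigma(1),\sigma(2)]\sqsubseteq\Gamma^T(x_2)$, so Lemma~\ref{m,l,m'} applies to $T$; symmetrically if $\tau$ is injective on $\{1,2,3\}$. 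The substance of the proof is the complementary case, where both $\sigma$ and $\tau$ are non-injective on $\{1,2,3\}$ (in particular $(n_T,n_V)=(2,2)$ is a typical instance): then two of the three $\mathcal L$-classes share a $T$-coordinate and two share a $V$-coordinate. Here I would argue as in Lemma~\ref{m,l,m'}: among the products of $a_1,a_2$ (and of the matrix-unit preimages) whose $\Gamma$ realizes a sufficiently long non-cycle orbit through the relevant $\mathcal L$-classes, pick one minimizing $\iota_1,\iota_2$ (using Lemma~\ref{iota}); read off from the coordinatewise description which of $T,V$ is responsible for each drop to $\theta$; and apply the coloring Lemma~\ref{Choose} to extract a periodic pattern showing that some product $x=x_{i_1}\cdots x_{i_r}$ in $T$ (or the analogous product in $V$) acts on two of the $\mathcal L$-classes of $J^T$ as a transposition while retaining enough data. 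Combining this with a suitable matrix-unit preimage exactly as in the $F_7$-primality proof then yields an onto homomorphism from a subsemigroup of $T$ (or of $V$) onto $F_7$.

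I expect the complementary case to be the main obstacle. The difficulty is twofold: a drop to $\theta$ in the action of a product on $\mathcal M$ can be caused by the $T$-coordinate, by the $V$-coordinate, or by the pair leaving the embedded $3$-element set, so one must disentangle these while tracking which products of $a_1,a_2$ realize which partial transformations; and $\langle a_1,a_2\rangle$ need not be nilpotent (only its image $\langle w_1,w_2\rangle$ is), so one cannot simply invoke nilpotency of the acting semigroup but must instead run the minimization of Lemma~\ref{m,l,m'} together with the bookkeeping of Lemma~\ref{Choose}. The result is a finite but delicate case analysis, patterned on the proof of Lemma~\ref{m,l,m'} and on that of the $\times$-primality of $F_7$.
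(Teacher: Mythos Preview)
Your plan is considerably more elaborate than the paper's proof and leaves the decisive ``complementary case'' as an acknowledged obstacle; the paper avoids that case entirely by a different choice of witness.

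The paper does not lift the idempotent $(1;1,1)$, does not try to track the three $\mathcal L$-classes of $\mathcal M$ via maps $\sigma,\tau$, and never invokes Lemma~\ref{Choose}. Instead it takes a deepest preimage $r_1\in U$ of the \emph{non-idempotent} matrix unit $(1;2,3)$, together with preimages $r_2,r_3$ of $w_1,w_2$. Regularity of $(1;2,3)$ (via $(1;2,3)w_1(1;2,3)=(1;2,3)$) forces the principal factor of $r_1$ in $T\times V$ to be an inverse completely $0$-simple semigroup $\mathcal M^0(G,n,n;I_n)$, and $(1;2,3)^2=\theta$ forces $r_1=(g;\alpha,\beta)$ with $\alpha\neq\beta$. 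Passing to coordinates $r_k=(t_k,v_k)$ and using the factorisation of the $\mathcal J$-class as $A_1\times A_2$, the relation $r_1^2\in R_{i+1}$ gives (after symmetry) $t_1^2\notin A_1$, i.e.\ $t_1=(l;a,b)$ with $a\ne b$: this single step replaces your whole injectivity dichotomy for $\sigma,\tau$. Now three relations in $F_{12}$,
\[
(1;2,3)w_1(1;2,3)=(1;2,3)w_2^{\,2}(1;2,3)=(1;2,3)w_2w_1w_2(1;2,3)=(1;2,3),
\]
force $t_1t_2t_1$, $t_1t_3^{\,2}t_1$, $t_1t_3t_2t_3t_1\in A_1$, which in the \C\ representation of $A_1\cup\{\theta\}$ reads exactly as $[b,a]\sqsubseteq\Gamma'(t_2)$, $[b,x,a]\sqsubseteq\Gamma'(t_3)$ for some $x$, and $(x)\subseteq\Gamma'(t_2)$. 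That is precisely the configuration of Lemma~\ref{m,l,m'}, applied inside $T$, giving $F_7\prec T$ or $F_{12}\prec T$.

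Regarding your complementary case: the appeal to Lemma~\ref{Choose} is the weak point. That lemma is tailored to powers of a single element (it exploits the cyclic structure $t_2^{\,q},t_2^{\,q+p},\ldots$ and the automatic white colouring of even exponents), whereas here the action is by products in the \emph{nilpotent} free subsemigroup $\langle w_1,w_2\rangle$, so there is no periodicity to extract and no natural $2$-colouring to feed into the lemma. Your minimisation idea via Lemma~\ref{iota} could perhaps be pushed through, but it would amount to re-proving Lemma~\ref{m,l,m'} in the product setting; the paper instead reduces directly to Lemma~\ref{m,l,m'} in one factor. If you want to salvage your outline, replace the lift of $(1;1,1)$ by a lift of $(1;2,3)$: the non-idempotency immediately singles out a factor and collapses the case analysis.
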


\begin{proof}
  Since $F_{12}\prec T\times V$, there exists a subsemigroup $U$ of
  $T\times V$ such that there is an onto homomorphism
  $\phi:U\rightarrow F_{12}$. Let
  $$T\times V= R_1 \supsetneqq R_2 \supsetneqq \cdots \supsetneqq
  R_{h} \supsetneqq R_{h+1} = \emptyset$$ be a principal series of
  $T\times V$. That is, each $R_i$ is an ideal of $T\times V$ and
  there is no ideal of $T\times V$ strictly between $R_i$ and
  $R_{i+1}$. Since $T$ and $V$ are finite, there exist elements $r_1,
  r_2$ and $r_3$ in the subsemigroup $U$ and an integer $1\leq i \leq
  h$ that satisfy the following properties:
  \begin{enumerate}
  \item $\phi(r_1)=(1;2,3)$, $\phi(r_2)=w_1$ and $\phi(r_3)=w_2$;
  \item $r_1 \in R_{i} \setminus R_{i+1}$;
    % $r_2 \in R_{i_2} \setminus
    % R_{i_2+1}$ and $r_3 \in
    % R_{i_3} \setminus
    % R_{i_3+1}$;
  \item $\phi^{-1}((1;2,3))\cap R_{i+1}=\emptyset$.
    % $\phi^{-1}(w_1)\cap R_{i_2+1}=\emptyset$ and $\phi^{-1}(w_2)\cap
    % R_{i_3+1}=\emptyset$.
  \end{enumerate}

  By $\phi(r_1r_2r_1)=(1;2,3)$ and~(3), $R_{i} / R_{i+1}$
  is an inverse completely $0$-simple semigroup or an inverse
  completely simple semigroup, say $M=\mathcal{B}_n(G)$. Then, there
  exist an element $g\in G$ and integers $1\leq\alpha,\beta\leq n$
  such that $r_1=(g;\alpha,\beta)$. Let $o$ be the order of $G$.
  Suppose that $\alpha=\beta$. Then, we have $r_1^{o+1}=r_1$ and,
  thus, $\phi^{-1}((1;2,3))\cap \phi^{-1}(\theta)\neq \emptyset$, a
  contradiction. As $\phi(r_1r_2r_1)=\phi(r_1r^2_3r_1)=(1;2,3)$, we
  have $r_1r_2r_1,r_1r^2_3r_1\in R_{i} \setminus R_{i+1}$ and, thus,
  $\Gamma(r_1r_2r_1)=\Gamma(r_1r^2_3r_1)=(\alpha, \beta,\theta)$ where
  $\Gamma$ is an \C\ of $R/R_{i+1}$ on $M$. Therefore, $[\beta,\alpha]
  \sqsubseteq \Gamma(r_2)$ and there exists an integer
  $1\leq\gamma\leq n$ such that $[\beta,\gamma,\alpha] \sqsubseteq
  \Gamma(r_3)$. Also, since $\phi(r_1r_3r_2r_3r_1)=(1;2,3)$, we obtain
  $(\gamma) \subseteq \Gamma(r_2)$.

  The elements $r_1,r_2$ and $r_3$ are in $T\times V$. Then, there
  exist elements $t_1,t_2, t_3\in T$ and $v_1,v_2,v_3\in V$ such that
  $r_1=(t_1,v_1)$, $r_2=(t_2,v_2)$ and $r_3=(t_3,v_3)$. Let
  \begin{displaymath}
    T= T_1 \supsetneqq T_2 \supsetneqq \cdots \supsetneqq T_{h'}
    \supsetneqq T_{h'+1} = \emptyset
  \end{displaymath}
  and
  \begin{displaymath}
    V= V_1 \supsetneqq V_2 \supsetneqq \cdots \supsetneqq V_{h''}
    \supsetneqq V_{h''+1} = \emptyset
  \end{displaymath}
  be, respectively, principal series of $T$ and $V$. Suppose that $t_1
  \in T_{j} \setminus T_{j+1}$ and $v_1 \in V_{k} \setminus V_{k+1}$,
  for some integers $1\leq j\leq h'$ and $1\leq k\leq h''$.

  Since $[\alpha,\beta] \sqsubseteq \Gamma(r_1)$ and $[\beta,\alpha]
  \sqsubseteq \Gamma(r_2)$, we obtain $(r_1r_2)^or_1(r_2r_1)^o=r_1$.
  Then, we have
  \begin{eqnarray}\label{t_1v_1}
    (t_1t_2)^ot_1(t_2t_1)^o=t_1\mbox{ and }(v_1v_2)^ov_1(v_2v_1)^o=v_1.
  \end{eqnarray}
  By \cite[Theorem 3]{Fab}, we have $R_{i} \setminus R_{i+1}=(T_{j}
  \setminus T_{j+1})\times (V_{k} \setminus V_{k+1})$. Let $A_1=T_{j}
  \setminus T_{j+1}$ and $A_2=V_{k} \setminus V_{k+1}$. Again,
  by~(\ref{t_1v_1}), the subset $A_i\cup\{\theta\}$ is an inverse
  completely $0$-simple semigroup or $A_i$ is an inverse completely
  simple semigroup, say $M_{A_i}=\mathcal{B}_{n_{A_i}}(G_{A_i})$ or
  $M_{A_i}=\mathcal{B}_{1}(G_{A_i})$ for $1\leq i\leq 2$. Then, there
  exist elements $(l;a,b)\in M_{A_1}$ and $(l';a',b')\in M_{A_2}$,
  such that $t_1=(l;a,b)$ and $v_1=(l';a',b')$.

  Since $r_1^2$ is in $R_{i+1}$ and $R_{i} \setminus R_{i+1}=A_1\times
  A_2$, we obtain $t_1^2\not\in A_1$ or $v_1^2\not\in A_2$. By
  symmetry, we may assume that $t_1^2\not\in A_1$, whence $a\neq b$.
  As $r_1r_2r_1,r_1r^2_3r_1\in A_1\times A_2$, we have
  $t_1t_2t_1,t_1t^2_3t_1\in A_1$ and, thus,
  $\Gamma'(t_1t_2t_1)=\Gamma'(t_1t^2_3t_1)=(a, b,\theta)$ where
  $\Gamma'$ is an \C\ of $T/T_{j+1}$ on $M_{A_1}$. Therefore, $[b,a]
  \sqsubseteq \Gamma'(t_2)$ and there exists an integer $1\leq x\leq
  n_{A_1}$ such that $[b,x,a] \sqsubseteq \Gamma'(t_3)$. Also, since
  $r_1r_3r_2r_3r_1\in A_1\times A_2$, it follows that
  $t_1t_3t_2t_3t_1\in A_1$ and, thus, $(x) \subseteq \Gamma'(t_2)$. By
  Lemma~\ref{m,l,m'}, we conclude that $F_{7}\prec T$ or $F_{12}\prec
  T$.
\end{proof}

%the following properties hold:
%\begin{enumerate}
%\item The quotient $S_i/S_{i+1}$ is either nilpotent or null semigroup for all $1\leq i\leq s$.
%\item $S$ does not have an epimorphic image with $F_{12}$ or $F_7$ as a subsemigroup.
%\end{enumerate}

Lemma~\ref{F12 prec} yields the following theorem.

\begin{thm}
  The class $\mathsf{\M}$ of all finite $\M$ semigroups is a pseudovariety.
\end{thm}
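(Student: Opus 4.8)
The plan is to verify that the class $\mathsf{\M}$ of $\M$ semigroups is closed under the three pseudovariety operations: subsemigroups, homomorphic images, and finite direct products. Recall that $S\in\mathsf{\M}$ means $S\in\mathsf{BG_{nil}}$ and $S\in\mathsf{Exc}(F_7,F_{12})$, i.e.\ $F_7\not\prec S$ and $F_{12}\not\prec S$.

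\textbf{Closure under division.} First I would observe that closure under subsemigroups and homomorphic images is immediate, and in fact both are subsumed by closure under division: if $S'\in\mathsf{\M}$ and $S\prec S'$, then $S\in\mathsf{\M}$. Indeed, $\mathsf{BG_{nil}}$ is a pseudovariety, hence closed under division, so $S\in\mathsf{BG_{nil}}$. Moreover the divisibility relation $\prec$ is transitive, so from $F_7\prec S$ and $S\prec S'$ we would get $F_7\prec S'$, contradicting $S'\in\mathsf{Exc}(F_7,F_{12})$; likewise for $F_{12}$. Hence $S$ is divisible by neither $F_7$ nor $F_{12}$, so $S\in\mathsf{Exc}(F_7,F_{12})$ and therefore $S\in\mathsf{\M}$.

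\textbf{Closure under finite direct products.} By an easy induction it suffices to treat a product of two semigroups $T,V\in\mathsf{\M}$ and show $T\times V\in\mathsf{\M}$. Since $\mathsf{BG_{nil}}$ is a pseudovariety, $T\times V\in\mathsf{BG_{nil}}$; in particular $T$ and $V$ are finite block groups, so Lemma \ref{F12 prec} is applicable. Suppose, for contradiction, that $T\times V\notin\mathsf{Exc}(F_7,F_{12})$, i.e.\ $F_7\prec T\times V$ or $F_{12}\prec T\times V$. In the first case, since $F_7$ is $\times$-prime (the theorem proved above), we get $F_7\prec T$ or $F_7\prec V$, contradicting $T,V\in\mathsf{Exc}(F_7,F_{12})$. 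In the second case, Lemma \ref{F12 prec} gives $T\notin\mathsf{Exc}(F_7,F_{12})$ or $V\notin\mathsf{Exc}(F_7,F_{12})$, again a contradiction. Hence $T\times V\in\mathsf{Exc}(F_7,F_{12})$, and therefore $T\times V\in\mathsf{\M}$.

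\textbf{Main obstacle.} The substantive content of the argument has already been extracted into the two results this theorem relies on: the $\times$-primality of $F_7$ and Lemma \ref{F12 prec}. With those in hand, the proof of the theorem itself is a short and routine assembly. The only minor point requiring care is making sure that the hypotheses of Lemma \ref{F12 prec} (that the two factors are finite block groups) are met, which follows since $\mathsf{\M}\subseteq\mathsf{BG_{nil}}\subseteq\mathsf{BG}$; no genuine difficulty arises here.
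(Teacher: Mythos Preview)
Your proposal is correct and follows exactly the same approach as the paper: the paper's proof is the single sentence ``Lemma~\ref{F12 prec} yields the following theorem,'' and your write-up simply makes explicit the routine assembly that the paper leaves to the reader—closure under division via transitivity of $\prec$, and closure under products via the $\times$-primality of $F_7$ together with Lemma~\ref{F12 prec}.
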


% We denote the pseudovariety of $\M$ semigroups by $\mathsf{\M}$. 
%\begin{thm}
%\cc{The semigroup $F_{12}$ is not $\times$-prime.}
%\end{thm}
%
%\begin{proof}
%\cc{Let $T$ be a subsemigroup of the full transformation semigroup on the set $\{a,b,c\} \cup \{\theta\}$ which is
%the union of the completely $0$-simple semigroup $\mathcal{M}^0(\{1\},\{a,b,c\},\{a,b,c\};I_{3})$ and the semigroup $\langle w_1,w_2\rangle$,
%$$
%T = \mathcal{M}^0(\{1\},\{a,b,c\},\{a,b,c\};I_{3}) \cup^{\Gamma} \{w_1,w_2\}
%,$$
%where $\Gamma(w_1)=(a)(c,b)$ and $\Gamma(w_2)=(c,a,b,\theta)$ and let $V$ be a subsemigroup of the full transformation semigroup on the set $\{a',b',c'\} \cup \{\theta'\}$ which is
%the union of the completely $0$-simple semigroup $\mathcal{M}^0(\{1\},\{a',b',c'\},\{a',b',c'\};I_{3})$ and the semigroup $\langle w'_1,w'_2\rangle$,
%$$
%V = \mathcal{M}^0(\{1\},\{a',b',c'\},\{a',b',c'\};I_{3}) \cup^{\Gamma} \{w'_1,w'_2\}
%,$$
%where $\Gamma(w'_1)=(a')(c',b',\theta')$ and $\Gamma(w_2)=(c',a',b')$.}
%
%\cc{We claim that $F_{12}\not\prec T$, $F_{12}\not\prec V$ but $F_{12}\prec T\times V$.}
%
%\cc{First we prove that $F_{12}\not\prec T$ by contradiction. Suppose that $F_{12}\prec T$. Then, there exists a subsemigroup $U$ of $T$ such that there is a homomorphism form $T$ to $F_{12}$. Let $\phi:U\rightarrow F_{12}$ be the onto homomorphism.}
%\end{proof}

\begin{prop}
  \label{PS \M identity}
  The sequence
  \begin{displaymath}
    \left\{\bigl(
      \lambda_{n!}(x,y,w_1,w_2,w_1,w_2,\ldots),
      \rho_{n!}(x,y,w_1,w_2,w_1,w_2,\ldots)\bigr)
      \mid n\in \mathbb{N}\right\}
  \end{displaymath}
  converges in $(\overline{\Omega}_{\{x,y,w_1,w_2\}}\mathsf{S})^2$.
\end{prop}

\begin{proof}
  The proof is similar to the proof of Proposition~\ref{PS PE
    identity}.
%For simplicity, we let $\lambda_{k}=\lambda_{k}(x,y,w_1,w_2,w_1,w_2,\ldots)$ and $\rho_{k}=\rho_{k}(x,y,w_1,w_2,w_1,w_2,\ldots)$ for every $1\leq k$.
%Suppose that $S$ is a finite semigroup and $\phi:\overline{\Omega}_{\{x,y,w_1,w_2\}}\mathsf{S}\rightarrow S$ is a continuous homomorphism. Let $k=\abs{S}$. Since $S$ is finite, there exist positive integers $t$ and $r$ such that $t<r\leq k^2+1$ and
%\begin{align*}
%&\Bigl(\phi(\lambda_{t}(x,y,w_1,w_2,w_1,w_2,\ldots)), \phi(\rho_{t}(x,y,w_1,w_2,w_1,w_2,\ldots))\Bigr)\\
%&=\Bigl(\phi(\lambda_{r}(x,y,w_1,w_2,w_1,w_2,\ldots)),\phi(\rho_{r}(x,y,w_1,w_2,w_1,w_2,\ldots))\Bigr).
%\end{align*}
%
%Let $s$ be a positive integer such that $t\leq s(r-t)< r$. Then, $$(\phi(\lambda_{n!}), \phi(\rho_{n!}))=(\phi(\lambda_{(n+1)!}), \phi(\rho_{(n+1)!})),$$ for every $n>s(r-t)$. The result follows.
\end{proof}

We denote the limit of the sequence
\begin{displaymath}
  \bigl(\lambda_{n!}(x,y,w_1,w_2,w_1,w_2,\ldots),
  \rho_{n!}(x,y,w_1,w_2,w_1,w_2,\ldots)\bigr)_{n}
\end{displaymath}
in the preceding proposition by
$(\overline{\lambda}(x,y,w_1,w_2),\overline{\rho}(x,y,w_1,w_2))$.

\begin{lem}
  \label{\M} %
  Let $S\in \mathsf{BG_{nil}}$. The semigroup
  $S$ is not $\M$ if and only if, there exist elements $a,w_1,w_2 \in
  S$ such
  that
  \begin{displaymath}
    \overline{\lambda}(aw_2,w_2a,w_1,w_2)\neq\overline{\rho}(aw_2,w_2a,w_1,w_2)
  \end{displaymath}
  and
  \begin{align*}
    (\overline{\rho}(aw_2,w_2a,w_1,w_2) w_1&
    \overline{\lambda}(aw_2,w_2a,w_1,w_2))^{\omega+1}\\
    &\quad\quad=\overline{\rho}(aw_2,w_2a,w_1,w_2) w_1
    \overline{\lambda}(aw_2,w_2a,w_1,w_2).  
  \end{align*}
%such that the following equalities hold:
%  \begin{equation}
%    \label{eq:F12-conditions}
%    w_1^3=w_1^2,
%    w_1^2w_2=w_1w_2,
%    w_2w_1^2=w_2w_1
%    \mbox{ and }
%    w_2w_1w_2=w_2^2.
%  \end{equation}
\end{lem}

\begin{proof}
  Suppose that there exist elements $a,w_1,w_2\in S$ such that
  \begin{displaymath}
    \overline{\lambda}(aw_2,w_2a,w_1,w_2)\neq\overline{\rho}(aw_2,w_2a,w_1,w_2)
  \end{displaymath}
  and
  \begin{align*}
    (\overline{\rho}(aw_2,w_2a,w_1,w_2) w_1&
    \overline{\lambda}(aw_2,w_2a,w_1,w_2))^{\omega+1}\\
    &\quad\quad=\overline{\rho}(aw_2,w_2a,w_1,w_2) w_1
    \overline{\lambda}(aw_2,w_2a,w_1,w_2).  
  \end{align*}
  Then  
  \begin{displaymath}
    \lambda_{k^2+1}(aw_2,w_2a,w_1,w_2,w_1,w_2,\ldots)
    \neq \rho_{k^2+1}(aw_2,w_2a,w_1,w_2,w_1,w_2,\ldots)
  \end{displaymath}
  where
  $k=\abs{S}$.
  % \cc{and for every integer $0\leq p<\lfloor (k^2+1)/2\rfloor$,
  % there exists a positive integer $m_p>1$ such that
  % $$\rho_{2p+1}(aw_2,w_2a,w_1,w_2,w_1,w_2,\ldots)=
  % \rho_{2p+1}(aw_2,w_2a,w_1,w_2,w_1,w_2,\ldots)^{m_p}.$$ }
  % Since $S$ is finite,
  By the pigeonhole principle, there exist positive integers $t$ and
  $r$ such that $t<r\leq k^2+1$ with
  \begin{align*}
    &\bigl(
      \lambda_{t}(aw_2,w_2a,w_1,w_2,w_1,w_2,\ldots),
      \rho_{t}(aw_2,w_2a,w_1,w_2,w_1,w_2,\ldots)\bigr)\\
    &= \bigl(
      \lambda_{r}(aw_2,w_2a,w_1,w_2,w_1,w_2,\ldots),
      \rho_{r}(aw_2,w_2a,w_1,w_2,w_1,w_2,\ldots)\bigr).
  \end{align*}
  Put $s_1= \lambda_{t}$, $s_2=\rho_{t}$ and $h=r-t$. To simplify the
  notation, we let $\widehat{\lambda}_j=\lambda_{t+j}$ and
  $\widehat{\rho}_j=\rho_{t+j}$ for $j\ge0$. In this notation, we may write
  $s_1 =\widehat{\lambda}_0=\widehat{\lambda}_h 
  =\lambda_{h}(s_1, s_2, v_{t+1}, \ldots, v_{t+h})$ and
  $s_2 =\widehat{\rho}_0=\widehat{\rho}_h 
  =\rho_{h}(s_1,s_2, v_{t+1}, \ldots, v_{t+h})$ where $v_{2l-1}=w_1$
  and $v_{2l}=w_2$ for all $1\leq l$. Note further that $s_1\ne s_2$.

  % where $\{w'_1,w'_2\}=\{w_1,w_2\}$.
  Let 
  \begin{displaymath}
    S= S_1 \supsetneqq S_2 \supsetneqq \cdots \supsetneqq S_{s}
    \supsetneqq S_{s+1} = \emptyset
  \end{displaymath}
  be a principal series of $S$. Suppose that $s_1 \in S_i \setminus
  S_{i+1}$ for some $1\leq i\leq s$. Because $S_i$ and $S_{i+1}$ are
  ideals of $S$, the above equalities yield $s_2 \in S_i \setminus
  S_{i+1}$ and $v_{t+1}, \ldots, v_{t+h} \in S \setminus S_{i+1}$.
  Furthermore, $S_i / S_{i+1}$ is a completely $0$-simple semigroup.

  First, suppose that $S_i \setminus S_{i+1}$ is a group. We denote by
  $e$ its identity element. Since $s_1,s_2 \in S_i \setminus S_{i+1}$,
  the equalities $s_1 =\lambda_{h}(s_1, s_2, v_{t+1}, \ldots,
  v_{t+h})$ and $s_2 =\rho_{h}(s_1, s_2, v_{t+1}, \ldots, v_{t+h})$
  yield $\widehat{\lambda}_j, \widehat{\rho}_j \in S_{i}\setminus
  S_{i+1}$, for every $1 \leq j \leq h$. Now, as $e$ is the identity
  element of $S_{i}\setminus S_{i+1}$, we have $\widehat{\lambda}_j
  v_{t+j+1} \widehat{\rho}_j=\widehat{\lambda}_j e v_{t+j+1}
  \widehat{\rho}_j$ and $\widehat{\rho}_j v_{t+j+1}
  \widehat{\lambda}_j=\widehat{\rho}_je v_{t+j+1}\widehat{\lambda}_j$,
  for every $0 \leq j < h$. If $S_{i+1} \neq \emptyset$ and
  $ev_{t+j}\in S_{i+1}$ holds for some $1 \leq j \leq h$, then
  $s_1,s_2\in S_{i+1}$, a contradiction. So, we must have $ev_{t+j}
  \in S_i \setminus S_{i+1}$, for all $1 \leq j \leq h$. Consequently,
  the following conditions are satisfied:
  \begin{align*}
    &s_1 =\lambda_{h}(s_1, s_2, v_{t+1}, \ldots, v_{t+h}) %
      =\lambda_{h}(s_1, s_2, ev_{t+1}, \ldots, ev_{t+h})\\
    &\neq s_2 = \rho_{h}(s_1,s_2, v_{t+1}, \ldots, v_{t+h}) %
      =\rho_{h}(s_1,s_2, ev_{t+1}, \ldots, ev_{t+h}).
  \end{align*}
  By Lemma~\ref{finite-nilpotent}, it follows that $S_i \setminus
  S_{i+1}$ is a non-nilpotent group. Hence, $S\not\in
  \mathsf{BG_{nil}}$, a contradiction.

  Now, suppose that $S_i \setminus S_{i+1}$ is not a group. By
  \cite[Lemma 2.1]{Jes-Okn}, in this case, we have
  $S_{i}/S_{i+1}=\mathcal{B}_n(G)$ with $G$ a nilpotent group and
  $n>1$. Also, since $s_{1},s_{2}\in S_{i}\setminus S_{i+1}$, there
  exist $1 \leq n_1,n_2,n_3,n_4 \leq n$ and $g, g' \in G$ such that
  $s_1 =(g; n_1, n_2)$ and $s_2 = (g'; n_3, n_4)$.
  Hence, %In particular, we obtain $h>1$.
  \begin{displaymath}
    [n_2,n_3;n_4,n_1] \sqsubseteq \Gamma(v_{t+1}),[n_2, n_1;n_4, n_3]
    \sqsubseteq \Gamma(v_{t+2}).
  \end{displaymath}

  Suppose that $(n_1, n_2)= (n_3, n_4)$. Then, there exist elements
  $k_{j} \in G$ such that $(k;\alpha,n_2)v_{t+j}=
  (kk_{j};\alpha,n_1)$, for every $k \in G, \alpha \in \{n_1,n_2\}$
  and $1 \leq j \leq h$. Since $\widehat{\lambda}_{j-1}=
  (g_{j-1};n_1,n_2)$ and $\widehat{\rho}_{j-1}= (g'_{j-1};n_1,n_2)$
  for some $g_{j-1},g_{j-1}'\in G$, we get
  $$\widehat{\lambda}_{j}= (g_{j-1}k_jg'_{j-1};n_1,n_2),~
  \widehat{\rho}_{j}= (g'_{j-1}k_jg_{j-1};n_1,n_2)$$ and, thus,
  \begin{displaymath}
    g = \lambda_{h}(g, g', k_{1}, \ldots, k_{h}),~ g' =
    \rho_{h}(g, g',k_{1}, \ldots, k_{h}).
  \end{displaymath}
  In view of Lemma~\ref{finite-nilpotent}, this yields a contradiction
  with $G$ being nilpotent. We have shown that $(n_{1},n_{2})\neq
  (n_{3},n_{4})$.

  If $t$ is an even integer,
  then
  \begin{displaymath}
    \Gamma(\overline{\lambda}(aw_2,w_2a,w_1,w_2))=(n_1,n_2,\theta),\
    \Gamma(\overline{\rho}(aw_2,w_2a,w_1,w_2))=(n_3,n_4,\theta),
  \end{displaymath}
  \begin{displaymath}
    [n_2,n_3;n_4,n_1] \sqsubseteq \Gamma(w_{1}),
    \mbox{ and }
    [n_2, n_1;n_4, n_3] \sqsubseteq
    \Gamma(w_{2}).
  \end{displaymath}
  Otherwise, we have
  \begin{displaymath}
    \Gamma(\overline{\lambda}(aw_2,w_2a,w_1,w_2))=(n_1,n_4,\theta),\
    \Gamma(\overline{\rho}(aw_2,w_2a,w_1,w_2))=(n_3,n_2,\theta),
  \end{displaymath}
  \begin{displaymath}
    [n_2, n_1;n_4, n_3] \sqsubseteq \Gamma(w_{1}),
    \mbox{ and }
    [n_2,n_3;n_4,n_1] \sqsubseteq \Gamma(w_{2}).
  \end{displaymath}
  By symmetry, we may assume that $t$ is even. Now, as
  \begin{align*}
      (\overline{\rho}(aw_2,w_2a,w_1,w_2)&w_1
    \overline{\lambda}(aw_2,w_2a,w_1,w_2))^{\omega+1}\\
    &\quad\quad=\overline{\rho}(aw_2,w_2a,w_1,w_2) w_1
    \overline{\lambda}(aw_2,w_2a,w_1,w_2),
  \end{align*}
  we obtain $n_2=n_3$. Therefore, the following relations hold:
  \begin{displaymath}
    [n_4,n_1] \sqsubseteq \Gamma(w_{1}),\
    (n_2) \subseteq \Gamma(w_{1}),
    \mbox{ and }
    [n_4,n_2, n_1] \sqsubseteq \Gamma(w_{2}).
  \end{displaymath}
  Now, by Lemmas~\ref{m,l} and \ref{m,l,m'}, we have $F_{7}\prec S$ or
  $F_{12}\prec S$. This shows that $S\not\in\mathsf{\M}$.

  Conversely, suppose that $S$ is not $\M$. It follows that $S\not\in
  \mathsf{Excl}(F_7,F_{12})$. Then, there exist a subsemigroup $U$ of
  $S$ and an onto homomorphism $\phi:U\rightarrow T$ such that $T$ is
  isomorphic with $F_7$ or $F_{12}$. Let
  \begin{displaymath}
    U= U_1 \supsetneqq U_2 \supsetneqq \cdots \supsetneqq U_{s'}
    \supsetneqq U_{s'+1} = \emptyset
  \end{displaymath}
  be a principal series of $U$. Suppose that $T$ is isomorphic with
  $F_7$. Since $U$ is finite, there exist elements $a$ and $r_1$ in
  the semigroup $U$ and an integer $1\leq i'\leq s'$ that satisfy the
  following properties:
  \begin{enumerate}
  \item $\phi(a)=(1;1,1)$ and $\phi(r_1)=u$;
  \item $a \in U_{i'} \setminus U_{i'+1}$;
  \item $\phi^{-1}((1;1,1))\cap U_{i'+1}=\emptyset$.
  \end{enumerate}
  Since
  $(\overline{\lambda}(ar_1,r_1a,r_1^2,r_1),\overline{\rho}(ar_1,r_1a,r_1^2,r_1))
  =(\lambda_{\gamma},\rho_{\gamma})$ for some even integer~$\gamma$,
  we obtain
  \begin{displaymath}
    \bigl(\phi(\overline{\lambda}(ar_1,r_1a,r_1^2,r_1)),
    \phi(\overline{\rho}(ar_1,r_1a,r_1^2,r_1))\bigr)
    =\bigl((1;1,2),(1;2,1)\bigr).
  \end{displaymath}
  It follows that
  $\overline{\lambda}(ar_1,r_1a,r_1^2,r_1)\neq\overline{\rho}(ar_1,r_1a,r_1^2,r_1)$.
  Now, as
  \begin{displaymath}
    \phi(\overline{\rho}(ar_1,r_1a,r_1^2,r_1) r_1^2
    \overline{\lambda}(ar_1,r_1a,r_1^2,r_1))=(1;2,2),
  \end{displaymath}
  similarly as in the proof of Lemma~\ref{F12 prec}, we have
  \begin{align*}
   (\overline{\rho}(ar_1,r_1a,r_1^2,r_1)& r_1^2
    \overline{\lambda}(ar_1,r_1a,r_1^2,r_1))^{\omega+1}\\
    &\quad\quad=\overline{\rho}(ar_1,r_1a,r_1^2,r_1) r_1^2
    \overline{\lambda}(ar_1,r_1a,r_1^2,r_1).
  \end{align*}
 
  Now, suppose that $T$ is isomorphic with $F_{12}$. Since $U$ is
  finite, there exist elements $a,r_1,r_2$ in the semigroup $U$ and an
  integer $1\leq i'\leq s'$ that satisfy the following properties:
  \begin{enumerate}
  \item $\phi(a)=(1;2,3)$, $\phi(r_1)=w_1$ and $\phi(r_2)=w_2$;
  \item $a \in U_{i'} \setminus U_{i'+1}$;
  \item $\phi^{-1}((1;2,3))\cap U_{i'+1}=\emptyset$.
  \end{enumerate}
  Since, $\phi(\overline{\lambda}(ar_2,r_2a,r_1,r_2))=(1;2,1)$ and
  $\phi(\overline{\rho}(ar_2,r_2a,r_1,r_2))=(1;1,3)$, we obtain
  $\overline{\lambda}(ar_2,r_2a,r_1,r_2)\neq\overline{\rho}(ar_2,r_2a,r_1,r_2)$.
  Now, as $$\phi(\overline{\rho}(ar_2,r_2a,r_1,r_2) w_1
  \overline{\lambda}(ar_2,r_2a,r_1,r_2))=(1;1,1),$$ again similarly as
  in the proof of Lemma~\ref{F12 prec}, we have
  \begin{align*}
      (\overline{\rho}(ar_1,r_1a,r_1^2,r_1)& r_1^2
    \overline{\lambda}(ar_1,r_1a,r_1^2,r_1))^{\omega+1}\\
    &\quad\quad=\overline{\rho}(ar_1,r_1a,r_1^2,r_1) r_1^2
    \overline{\lambda}(ar_1,r_1a,r_1^2,r_1).
  \end{align*}

  The result follows.
\end{proof}

By Lemma~\ref{\M}, we have $\mathsf{NT}\subseteq\mathsf{\M}$. In
Section~\ref{sec:comp-above-pseud}, we show that the pseudovariety
$\mathsf{NT}$ is strictly contained in $\mathsf{\M}$.

Lemma~\ref{\M} yields the following theorem.

\begin{thm}
  \label{PS NTL4}
  Let $S$ be a finite semigroup. The semigroup $S$ is in $\mathsf{\M}$
  if and only if $S\in \mathsf{BG_{nil}}$ and satisfies the
  implication
  \begin{align*}
    &(\overline{\rho}(aw_2,w_2a,w_1,w_2) w_1
      \overline{\lambda}(aw_2,w_2a,w_1,w_2))^{\omega+1}\\
      &\quad=\overline{\rho}(aw_2,w_2a,w_1,w_2) w_1
      \overline{\lambda}(aw_2,w_2a,w_1,w_2)\\ 
    &\qquad\qquad\qquad\Rightarrow\overline{\lambda}(aw_2,w_2a,w_1,w_2)
      =\overline{\rho}(aw_2,w_2a,w_1,w_2).
  \end{align*}
\end{thm}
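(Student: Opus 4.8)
The plan is to deduce this statement directly from Lemma~\ref{\M}. The theorem is essentially the contrapositive packaging of that lemma combined with the definition of the pseudovariety $\mathsf{\M}$. First I would recall that, by definition, a finite semigroup $S$ belongs to $\mathsf{\M}$ precisely when $S\in\mathsf{BG_{nil}}$ and $S\in\mathsf{Exc}(F_7,F_{12})$, that is, $S$ is not $\M$ in the sense negated by Lemma~\ref{\M} only when $S\in\mathsf{BG_{nil}}$ fails or $S\notin\mathsf{Exc}(F_7,F_{12})$. So the whole argument splits according to whether $S\in\mathsf{BG_{nil}}$.

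For the forward implication, assume $S\in\mathsf{\M}$. Then in particular $S\in\mathsf{BG_{nil}}$, which is the first asserted condition. Now suppose, towards a contradiction, that $S$ fails the displayed implication: there exist $a,w_1,w_2\in S$ with $(\rho_{aw_2,w_2a,w_1,w_2}\,w_1\,\lambda_{aw_2,w_2a,w_1,w_2})^{\omega+1}=\rho_{aw_2,w_2a,w_1,w_2}\,w_1\,\lambda_{aw_2,w_2a,w_1,w_2}$ but $\lambda_{aw_2,w_2a,w_1,w_2}\neq\rho_{aw_2,w_2a,w_1,w_2}$. These are exactly the hypotheses of the ``only if'' direction of Lemma~\ref{\M} (with $S\in\mathsf{BG_{nil}}$ already in hand), so that lemma gives $S\notin\mathsf{\M}$, a contradiction. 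Hence $S$ satisfies the implication.

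For the converse, assume $S\in\mathsf{BG_{nil}}$ and that $S$ satisfies the displayed implication. If $S$ were not $\M$, then since $S\in\mathsf{BG_{nil}}$, Lemma~\ref{\M} would produce elements $a,w_1,w_2\in S$ witnessing both $\lambda_{aw_2,w_2a,w_1,w_2}\neq\rho_{aw_2,w_2a,w_1,w_2}$ and the $(\omega+1)$-power equation; but that directly contradicts the implication being satisfied. Therefore $S\in\mathsf{\M}$, and since the class of $\M$ semigroups is the pseudovariety $\mathsf{\M}$, we are done.

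There is essentially no obstacle here: the content is entirely carried by Lemma~\ref{\M}, and this theorem is just its logically clean reformulation as a membership criterion of the form ``$\mathsf{BG_{nil}}$ together with a pseudoidentity-style implication''. The only point requiring a word of care is that the clauses ``$S\in\mathsf{BG_{nil}}$'' appearing as a standing hypothesis in Lemma~\ref{\M} must be respected in both directions, which is why both halves of the proof begin by securing membership in $\mathsf{BG_{nil}}$ before invoking the lemma; once that is noted, the equivalence is immediate.
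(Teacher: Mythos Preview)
Your proposal is correct and matches the paper's approach exactly: the paper simply states that Lemma~\ref{\M} yields the theorem, and you have spelled out that deduction. One cosmetic slip: in the forward implication you invoke the ``only if'' direction of Lemma~\ref{\M}, but what you actually use there is the ``if'' direction (existence of the witnesses implies $S$ is not $\M$); the two directions are swapped in your labeling, though the logic is sound.
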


Even though we have shown that $\mathsf{\M}$ is a pseudovariety, and
therefore admits a basis of pseudoidentities by Reiterman’s theorem,
we leave as an open problem to determine a simple such basis. In
particular, we do not know whether $\mathsf{\M}$ admits a finite basis
of pseudoidentities.

From \cite[Corollary 4.2]{Jes-Sha3}, it follows that every \D\
semigroup $S$ in the pseudovariety $\mathsf{\overline{G}}_{nil}$ is an
epimorphic image of one of the following semigroups:
\begin{enumerate}
%\item a Schmidt group (a group that is not nilpotent but whose proper subgroups, are nilpotent),
\item a semigroup of right or left zeros with 2 elements;
\item $\mathcal{B}_2(G) \cup^{\Gamma} T$ such that $T=\langle u
  \rangle \cup \{\theta\}$ with $\theta$ the zero of $S$,
  $u^{2^{k}}=1$ the identity of $T \setminus \{\theta\}$ (and of $S$)
  and $\Gamma (u)= (1,2)$ (for $k=1$, one obtains $F_7$);
\item $\mathcal{B}_3(G) \cup^{\Gamma} \langle w_1, w_2\rangle$, with
  $\Gamma(w_1)=(2,1,3,\theta)$ and $\Gamma(w_2)=(2,3,\theta)(1)$,
  $w_{2}w_{1}^{2}=w_{1}^{2}w_{2}=w_{1}^{3}=w_{2}w_{1}w_{2}= \theta$;
\item $\mathcal{B}_n(G) \cup^{\Gamma} \langle v_1, v_2\rangle$, with
  \begin{displaymath}
    [k, m;k',m']\sqsubseteq \Gamma(v_1)
    \mbox{ and }
    [k, m';k', m] \sqsubseteq \Gamma(v_2)
  \end{displaymath}
  for pairwise distinct integers $k, k', m$ and $m'$ between $1$ and
  $n$, there do not exist distinct integers $l_1$ and $l_2$ between
  $1$ and $n$ such that $(l_1,l_2)\subseteq \Gamma(x)$ for some $x\in
  \langle v_1, v_2 \rangle$, and there do not exist pairwise distinct
  integers $o_1, o_2$ and $o_3$ between $1$ and $n$ such that
  \begin{displaymath}
    (o_2,o_1,o_3,\theta) \subseteq \Gamma(y_1),
    (o_2,o_3,\theta)(o_1)\subseteq \Gamma(y_2)
  \end{displaymath}
  for some $y_1,y_2\in \langle v_1, v_2 \rangle$.
\end{enumerate}

The pseudovariety $\mathsf{BG_{nil}}$ does not contain any semigroup
in (1). The pseudovariety $\mathsf{PE}$ does not contain any semigroup
in (1) and (2). The pseudovariety $\mathsf{\M}$ does not contain any
semigroup in (1), (2) and (3). Clearly the pseudovariety $\mathsf{MN}$
does not contain any semigroup in (1)--(4).

\section{The pseudovariety \pv{NT}}
\label{sec:pvy-pvnt}

%%%%%%%%PS NT%%%%%%%%%%%%%%%%%%%%%%%%%%%%%%%%%%%%%%%%%%%%%%%%%%%%%%%%%%%%%%%%%%%%%%%%%%%%%%%%%%%%%%%%%%%%%%%%%%%%%%%%%%%%%%%%%%%%%%%%%%%%%%%%%
%%%%%%%%PS NT%%%%%%%%%%%%%%%%%%%%%%%%%%%%%%%%%%%%%%%%%%%%%%%%%%%%%%%%%%%%%%%%%%%%%%%%%%%%%%%%%%%%%%%%%%%%%%%%%%%%%%%%%%%%%%%%%%%%%%%%%%%%%%%%%
%%%%%%%%PS NT%%%%%%%%%%%%%%%%%%%%%%%%%%%%%%%%%%%%%%%%%%%%%%%%%%%%%%%%%%%%%%%%%%%%%%%%%%%%%%%%%%%%%%%%%%%%%%%%%%%%%%%%%%%%%%%%%%%%%%%%%%%%%%%%%

Unlike the pseudovarieties $\mathsf{MN}$ and $\mathsf{PE}$, the pseudovariety $\mathsf{NT}$ turns out to have infinite rank.

In this section, we use the concept of automaton congruence. For this
reason, we first give this definition for the convenience of the
reader. Recall that a congruence on the (incomplete) automaton
$\mathcal{A}=(Q,\Sigma,\delta)$ is an equivalence relation $\sim$ on
the state set $Q$ such that if $p \sim q$ and $\delta(p,a)$ and
$\delta(q, a)$ are both defined in $\mathcal{A}$, then $\delta(p,a) \sim
\delta(q, a)$. The quotient automaton $\mathcal{A}/{\sim}$ has set of
states $Q/{\sim}$ and it has an $a$-labeled transition from the
$\sim$-class $[p]$ of $p$ to $[q]$ if there exists an $a$-labeled
transition of $\mathcal{A}$ from $p'$ to $q'$ for some states $p'\sim
p$ and $q'\sim q$.

%\cc{Note that the transition homomorphisms $\tau:\Sigma^+\to
%  T(\mathcal{A})$ and $\sigma:\Sigma^+\to T(\mathcal{A}/{\sim})$ give
%  a relational morphism $T(\mathcal{A})\to T(\mathcal{A}/{\sim})$
%  which is not necessarily a homomorphism. So, we don't have a way of
%  associating to elements of $T(\mathcal(A))$ elements of
%  $T(\mathcal(A)/{\sim})$ but only to concrete factorizations of the
%  former in terms of generators. One checks very carefully in the
%  proof of Theorem~\ref{PS NT} whether somewhere we need to work with
%  elements of $T(\mathcal(A))$ and not specific factorizations of them
%  in terms of generators.}

The main result of this section is the following theorem, whose proof
occupies the remainder of the section.

\begin{thm}
  \label{PS NT}
  The pseudovariety $\mathsf{NT}$ has infinite rank and, therefore, it
  is non-finitely based.
\end{thm}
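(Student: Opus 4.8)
The plan is to show that the rank of $\mathsf{NT}$ exceeds $k$ for every positive integer $k$; since every finitely based pseudovariety has finite rank (as recalled in Section~\ref{sec:pvy-pvmn}), this immediately gives that $\mathsf{NT}$ is non-finitely based. To witness that the rank exceeds $k$ it is enough to produce a finite semigroup $S_k$ with $S_k\notin\mathsf{NT}$ such that every subsemigroup of $S_k$ generated by at most $k$ elements belongs to $\mathsf{NT}$: then any $k$ of the generators of $S_k$ span a proper subsemigroup lying in $\mathsf{NT}$ while $S_k$ itself does not. As a preliminary I would record the finite analogue of the defining scheme of $\mathsf{NT}$, in the spirit of Lemma~\ref{finite-nilpotent} and Proposition~\ref{PS PE identity}: for a finite semigroup $S$ the sequence $(\lambda_{n!}(x,y,1,z_2,\ldots),\rho_{n!}(x,y,1,z_2,\ldots))_n$ eventually stabilises in every finite continuous image, and $S\notin\mathsf{NT}$ if and only if there are an integer $m\ge 2$, distinct elements $x,y\in S$, and $w_2,\ldots,w_m\in S^1$ with
$$x=\lambda_m(x,y,1,w_2,\ldots,w_m)\quad\text{and}\quad y=\rho_m(x,y,1,w_2,\ldots,w_m).$$
I will call such a configuration a \emph{return}; the leading argument being forced to equal $1$ is exactly the feature distinguishing $\mathsf{NT}$ from $\mathsf{MN}$, and it is the source of the unbounded rank.

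For the family I would take $S_k$ to be a $\theta$-disjoint union $\mathcal{M}^0(\{1\},N_k,N_k;I_{N_k})\cup^{\Gamma_k}T_k$, after the pattern of $N_1$, $F_7$ and $F_{12}$ above, where $T_k$ is a finite Mal'cev-nilpotent monoid with generators $w_1,\ldots,w_k$ and $\Gamma_k$ is an action of $T_k$ on $\{1,\ldots,N_k\}\cup\{\theta\}$ by partial injective maps whose cycle-and-tail structure is tuned so that: (i) there is an explicit return in which the letters $w_2,\ldots,w_m$ run through all of $w_1,\ldots,w_k$, so that $S_k\notin\mathsf{NT}$ --- this is a direct computation with the $\Gamma_k$-representation, of the kind carried out in Lemma~\ref{m,l} and in the proof of Lemma~\ref{\M}; (ii) $S_k\in\mathsf{BG_{nil}}$ with $F_7\not\prec S_k$ and $F_{12}\not\prec S_k$, so that $S_k$ actually lies in $\mathsf{\M}\setminus\mathsf{NT}$ (a nonempty difference, since $\mathsf{NT}$ is strictly contained in $\mathsf{\M}$, as will be shown in Section~\ref{sec:comp-above-pseud}) --- this amounts to checking that $\Gamma_k$ contains no $2$-cycle and no $F_{12}$-pattern, via Lemmas~\ref{m,l} and~\ref{m,l,m'}; and (iii) $N_k$ grows with $k$ so that any return must drag a point of $\{1,\ldots,N_k\}$ around a closed itinerary that visits the tails of $\Gamma_k(w_1),\ldots,\Gamma_k(w_k)$ in turn.

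The heart of the argument --- and the step I expect to be the main obstacle --- is to show that no subsemigroup $U=\langle u_1,\ldots,u_k\rangle$ of $S_k$ admits a return. Suppose one does, with distinct $x,y\in U$ and $w_2,\ldots,w_m\in U^1$. Locating $x$ and $y$ in a principal factor of $U$ --- necessarily completely $0$-simple or completely simple, and, since $S_k\in\mathsf{BG_{nil}}$ is built over the trivial group, an inverse Rees matrix semigroup $\mathcal{M}^0(\{1\},n,n;I_{n})$ by \cite[Lemma~2.1]{Jes-Okn} --- one reads off from $x=\lambda_m(x,y,1,\ldots)$ and $y=\rho_m(x,y,1,\ldots)$, crucially using that the leading letter is the identity, that the transformations $\Gamma_k(u_i)$ must jointly realise a closed itinerary of the special combinatorial shape imposed by $\Gamma_k$; were the first letter allowed to be arbitrary, as in $\mathsf{MN}$, a short-circuited return would be available, which is why $\mathsf{MN}$ has finite rank. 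A descent on the parameters $\iota_1,\iota_2$ as in Lemma~\ref{iota}, combined with a counting argument in the style of Lemma~\ref{Choose}, should then show that such an itinerary cannot be realised inside a subsemigroup generated by only $k$ elements, a contradiction. Granting this, $S_k\notin\mathsf{NT}$ while every subsemigroup of $S_k$ generated by at most $k$ elements lies in $\mathsf{NT}$; hence the rank of $\mathsf{NT}$ exceeds $k$ for all $k$, so $\mathsf{NT}$ has infinite rank and is therefore non-finitely based.
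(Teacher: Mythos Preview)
Your overall strategy---build, for each $k$, a finite semigroup $S_k\notin\mathsf{NT}$ whose $k$-generated subsemigroups all lie in $\mathsf{NT}$---is exactly the paper's. But what you have written is a plan, not a proof: the construction of $S_k$ is never actually given, and the crucial step (that no $k$-generated subsemigroup admits a ``return'') is only asserted to ``should then follow'' from Lemmas~\ref{iota} and~\ref{Choose}. Those lemmas were designed for the $\mathsf{PE}$/$\mathsf{\M}$ analysis and there is no indication of how a descent on $\iota_1,\iota_2$ or a parity argument would force a long NT-cycle; nothing in the proposal bridges that gap. Your extra requirement (ii), that $S_k\in\mathsf{\M}$, is also unnecessary and only adds work.

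The paper's execution is quite different from the mechanism you sketch. It writes down, for $n=m-2$, an explicit transformation semigroup
\[
N_{2^n}=\mathcal{M}^0(\{1\},H_n,H_n;I_{6\cdot 2^n})\cup\langle x,y,w_1,\ldots,w_{n+2}\rangle
\]
with $x,y,w_1,\ldots,w_{n+2}$ specified as concrete partial injections on $6\cdot 2^n$ points. Two facts carry the argument. First, by passing to a quotient automaton on six states the paper shows that if $\lambda_{n+3}(r_1,r_2,1,v_1,\ldots,v_{n+2})\neq\rho_{n+3}(\ldots)$ with both sides nonzero, then necessarily $\{r_1,r_2\}=\{x,y\}$ and $v_i=w_i$ for every $i$: the combinatorics of the $w_i$ force each slot to be filled by a unique generator. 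Second, a computation shows that $x,y,w_1,\ldots,w_{n+2}$ are \emph{indecomposable} in $N_{2^n}$ (none lies in $N_{2^n}N_{2^n}$), so a subsemigroup contains one of them only if it appears among the chosen generators. Together these give that any $(m-1)$-generated subsemigroup is in $\mathsf{NT}$, while the full semigroup is not. The point is that the rigidity comes from the \emph{uniqueness} of the failing tuple and the \emph{irreducibility} of its entries, not from a size/descent argument of the kind you propose.
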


\begin{proof}
  To establish the theorem, we show that, for infinitely many positive
  integers $m$, there exists a finite semigroup $S\not\in\mathsf{NT}$
  all of whose $(m-1)$-generated subsemigroups lie in $\mathsf{NT}$.

  Consider the alphabet $A_n=\{x,y,w_1,\ldots,w_{n+2}\}$ and its
  action on the set
  \begin{displaymath}
    Q_n=\{a_i,a_i',b_i,b_i',c_i,d_i: i=1,\ldots,2^n
    % a_1,\ldots,a_{2^n},a'_1,\ldots,a'_{2^n},
    % b_1,\ldots,b_{2^n},b'_1,\ldots,b'_{2^n},
    % c_1,\ldots,c_{2^n},d_1,\ldots,d_{2^n}
    \} 
  \end{displaymath}
  given by the following formulas, where we adopt common notation for
  partial transformations of a finite set, in terms of a 2-row matrix
  in which the first row is the domain and the image of an element in
  the domain is indicated immediately below it:
  \begingroup
\allowdisplaybreaks
  \begin{align*}
    \eta(x)
    &=
      \begin{pmatrix}
        c_1&c_2&c_3&\ldots&c_{2^{n}}&b'_1&b'_2&b'_3&\ldots&b'_{2^n} \\
        a_1&a_2&a_3&      &a_{2^{n}}&d_1 &d_2 &d_3 &      &d_{2^{n}}
      \end{pmatrix},\\
    \eta(y)
    &=
      \begin{pmatrix}
        a'_1&a'_2&a'_3&\ldots&a'_{2^{n}}&d_1&d_2&d_3&\ldots&d_{2^n} \\
        c_1 &c_2 &c_3 &      &c_{2^{n}} &b_1&b_2&b_3&      &b_{2^{n}}
      \end{pmatrix},\\ 
    \eta(w_1)
    &=
      \begin{pmatrix}
     a_{2^{n-1}+1}&a_{2^{n-1}+2}&\ldots&a_{2^{n}}   &   b_{2^{n-1}+1}&b_{2^{n-1}+2}&\ldots&b_{2^{n}}     \\
     b'_1         &b'_2         &      &b'_{2^{n-1}}&   a'_1         &a'_2         &      &a'_{2^{n-1}} 
      \end{pmatrix},\\
    \eta(w_2)
    &=
      \begin{pmatrix}
        b_{2^{n-2}+1} &b_{2^{n-2}+2} &\ldots&b_{2^{n-1}}          &a_{2^{n-2}+1} &\ldots&a_{2^{n-1}} \\
        b'_{2^{n-1}+1}&b'_{2^{n-1}+2}&      &b'_{2^{n-1}+2^{n-2}} &a'_{2^{n-1}+1}&      &a'_{2^{n-1}+2^{n-2}}
      \end{pmatrix},\\
    \eta( w_i)
    &=
      \left(\begin{matrix}
          a_{2^{n-i}+1}                    &a_{2^{n-i}+2}                    &\ldots&a_{2^{n-(i-1)}}\\
          b'_{2^{n-1}+\cdots+2^{n-(i-1)}+1}&b'_{2^{n-1}+\cdots+2^{n-(i-1)}+2}&      &b'_{2^{n-1}+\cdots+2^{n-i}}
        \end{matrix}\right.\\
    &\qquad\qquad
      \left.\begin{matrix}
          b_{2^{n-i}+1}                    &b_{2^{n-i}+2}                    &\ldots&b_{2^{n-(i-1)}} \\
          a'_{2^{n-1}+\cdots+2^{n-(i-1)}+1}&a'_{2^{n-1}+\cdots+2^{n-(i-1)}+2}&      &a'_{2^{n-1}+\cdots+2^{n-i}}
        \end{matrix}\right),\\
    &\ \text{for odd}\ i\in\{3,\ldots,n\},\\ \\
    \eta(w_{i})
   = &
       \left(\begin{matrix}
           b_{2^{n-i}+1}                    &b_{2^{n-i}+2}                    &\ldots&b_{2^{n-(i-1)}}            \\
           b'_{2^{n-1}+\cdots+2^{n-(i-1)}+1}&b'_{2^{n-1}+\cdots+2^{n-(i-1)}+2}&      &b'_{2^{n-1}+\cdots+2^{n-i}}
         \end{matrix}\right.\\
    &\qquad\qquad
      \left.\begin{matrix}
          a_{2^{n-i}+1}                    &a_{2^{n-i}+2}                    &\ldots&a_{2^{n-(i-1)}}\\
          a'_{2^{n-1}+\cdots+2^{n-(i-1)}+1}&a'_{2^{n-1}+\cdots+2^{n-(i-1)}+2}&      &a'_{2^{n-1}+\cdots+2^{n-i}}
        \end{matrix}\right),\\
    &\ \mbox{for even}\ i\in\{4,\ldots,n\},\\ 
    \eta(w_{n+1})
    &=
      \begin{pmatrix}
        a_{1}     &b_{1} \\
        b'_{2^{n}}&a'_{2^{n}}
      \end{pmatrix},\ 
      \eta(w_{n+2})=
      \begin{pmatrix}
        b_{1}     &a_{1} \\
        b'_{2^{n}}&a'_{2^{n}}
      \end{pmatrix}. 
  \end{align*}
  \endgroup

  Let $m> 5$ be an even integer, $n=m-4$ and $N_{2^{n}}$ be the set 
  \begin{eqnarray}
    \label{ex4}
    N_{2^{n}} = M \cup\langle
    \eta(x),\eta(y),\eta(w_1),\ldots,\eta(w_{n+2})\rangle,
  \end{eqnarray}
  where the angle brackets denote the subsemigroup of the partial
  transformation semigroup on $Q_n$ generated by the given
  transformations and $M\simeq\mathcal{B}_{6\times 2^{n}}(\{1\})$ is
  the Brandt semigroup of all at most rank~1 transformations of the
  set $Q_n$. Note that $N_{2^n}$ is a subsemigroup of the partial
  transformation semigroup on the set $Q_n$.

  We claim that $N_{2^{n}}\not\in\mathsf{NT}$ and all
  $(m-1)$-generated subsemigroups of $N_{2^{n}}$ are in $\mathsf{NT}$.
  To establish our claim, we first define the finite semigroup $N$,
  which plays an important role in this proof. In the following, we
  define the automaton $\mathcal{A}_n$, whose transition semigroup is
  the semigroup $N_{2^{n}}$. We also define a congruence $\sim$ on
  $\mathcal{A}_n$ such that the transition semigroup of the quotient
  automaton $\mathcal{A}_n/{\sim}$ is the semigroup $N$.

% The proof relies on calculations in the semigroup $N$, and because
% $N$ is the transition semigroup of the automaton
% $\mathcal{A}/{\sim}$, we can examine the semigroup $N_{2^{n}}$,
% even though the optional number $n$ has been selected.

  Let $\mathcal{N}$ be the following automaton:
  \begin{center}
    \begin{tikzpicture}[x=1.2mm,y=1.2mm,thick,->,>=stealth',
      shorten >=1pt]
      \node [state] (c) at (0,0) {$c$}; %
      \node [state] (a) at (16,-8) {$a$}; %
      \node [state] (a') at (16,8) {$a'$}; %
      \node [state] (b) at (44,8) {$b$}; %
      \node [state] (b') at (44,-8) {$b'$}; %
      \node [state] (d) at (60,0){$d$}; %
      
      \path (c) edge[below] node {$X$} (a) (a') edge[above] node {$Y$}
      (c) (a) edge[below] node {$W_1$} (b') edge[right] node {$W_2$}
      (a') (d) edge[above] node {$Y$} (b) (b') edge[below] node {$X$}
      (d) (b) edge[above] node {$W_1$} (a') edge[left] node {$W_2$}
      (b');
    \end{tikzpicture}
  \end{center}
  and $N$ be the transition semigroup of the automaton $\mathcal{N}$.
  Denoting $\varepsilon$ the transition homomorphism
  $\{X,Y,W_1,W_2\}^+\to N$, we obtain
%Note that the action of alphabet $\{X,Y,W_1,W_2\}$ on the states is
%given by the following formulas, where we use the same notation for
%each letter and its action:
  \begin{equation}
    \label{eq:XYW1W2}
    \varepsilon(X)=
    \begin{pmatrix}
      c&b' \\
      a&d
    \end{pmatrix},\
    \varepsilon(Y)=
    \begin{pmatrix}
      a'&d \\
      c &b
    \end{pmatrix},\
    \varepsilon(W_1)=
    \begin{pmatrix}
      a &b\\
      b'&a'
    \end{pmatrix},\
    \varepsilon(W_2)=
    \begin{pmatrix}
      b &a\\
      b'&a'
    \end{pmatrix}.
  \end{equation}
  The image of $\varepsilon$ is
  \begin{displaymath}
    N=
    \langle
    \varepsilon(X),\varepsilon(Y),\varepsilon(W_1),\varepsilon(W_{2})
    \rangle,
  \end{displaymath}
  viewed as a subsemigroup of the partial transformation semigroup on
  the set $Q=\{a,a',b,b',c,d\}$.
  % With the aid of GAP \cite{Gap}, one can check that
  Since the image of none of the rank~2 partial bijections in~\eqref{eq:XYW1W2}
  is the domain of one of them, any product of them with at least two
  factors has rank at most~1, so that
  \begin{equation}
    \label{eq:N}
    N=\mathcal{B}_6(\{1\})\cup
    \{\varepsilon(X),\varepsilon(Y),\varepsilon(W_1),\varepsilon(W_{2})\}
  \end{equation}
  where $\mathcal{B}_6({1})$ stands here for the semigroup of partial
  bijections of the state set of rank at most one.
  
  Consider the alphabet $B_n$ consisting of the following letters:
  \begin{itemize}
  \item $x,y,w_1,\ldots,w_{n+2}$,
  \item $t_{e_i,e_{i+1}}$ ($i=1,\ldots,2^n-1$,
    $e\in\{a,a',b,b',c,d\}$),
  \item $t_{a_{2^n},a'_1}$, $t_{a'_{2^n},b_1}$, $t_{b_{2^n},b'_1}$,
    $t_{b'_{2^n},c_1}$, $t_{c_{2^n},d_1}$, $t_{d_{2^n},a_1}$.
  \end{itemize}
  We extend the transition homomorphism $\eta:A_n^+\to N_{2^n}$ to
  $B_n^+$ as follows:
  \begin{displaymath}
    \eta(t_{e,f})=
    \begin{pmatrix}
      e\\
      f
    \end{pmatrix}.
  \end{displaymath}
  Note that $\eta$ still takes its values in $N_{2^n}$.

% Let 
% $$
% m_{e_1,e_2}=\begin{pmatrix}
% e_1\\
% e_2
% \end{pmatrix},
% m_{e_2,e_3}=\begin{pmatrix}
% e_2\\
% e_3
% \end{pmatrix},
% \ldots,
% m_{e_{2^n-1},e_{2^n}}=\begin{pmatrix}
% e_{2^n-1}\\
% e_{2^n}
% \end{pmatrix},$$
% for every $e\in \{a,a',b,b',c,d\}$, and
% \begin{align*}
% m_{a_{2^n},a'_1}=&\begin{pmatrix}
% a_{2^n}\\
% a'_1
% \end{pmatrix},
% m_{a'_{2^n},b_1}=\begin{pmatrix}
% a'_{2^n}\\
% b_1
% \end{pmatrix},
% m_{b_{2^n},b'_1}=\begin{pmatrix}
% b_{2^n}\\
% b'_1
% \end{pmatrix},
% m_{b'_{2^n},c_1}=\begin{pmatrix}
% b'_{2^n}\\
% c_1
% \end{pmatrix},\\
% m_{c_{2^n},d_1}=&\begin{pmatrix}
% c_{2^n}\\
% d_1
% \end{pmatrix},
% m_{d_{2^n},a_1}=\begin{pmatrix}
% d_{2^n}\\
% a_1
% \end{pmatrix}.
% \end{align*}
  Let $\mathcal{A}_n=(Q_n,B_n,\delta_n)$ be the automaton where
% $Q=H_n$ is the set of states,
% \begin{align*}
% \Sigma=\{x,y,w_1,\ldots,w_{n+2},&m_{e_1,e_2},m_{e_2,e_3},\ldots,m_{e_{2^n-1},e_{2^n}},m_{a_{2^n},a'_1},m_{a'_{2^n},b_1},\\
% &m_{b_{2^n},b'_1},m_{b'_{2^n},c_1},m_{c_{2^n},d_1},m_{d_{2^n},a_1}\mid e\in \{a,a',b,b',c,d\}\}
% \end{align*}
% is the set of symbols
  $\delta_n(q,g)=\Gamma(g)(q)$, for all $q\in Q_n$ and $g\in B_n$,
  $\Gamma$ being the \C\ of $N_{2^{n}}$ on $M$ given by~$\eta$. Let
  $\mathcal{A}'_n=(Q,B_n,\delta'_n)$ be the quotient of the automaton
  $\mathcal{A}_n$ obtained by identifying all states with the same
  letter, that is, forgetting the numerical indices. In other words,
  there is a quotient morphism $\phi'_n:\mathcal{A}_n\rightarrow
  \mathcal{A}'_n$ where $\phi'_n(e_i)=e$ for every $e\in Q$ and $1\leq
  i\leq 2^n$. Also, let $\Gamma'(g)(q)=\delta'_n(q,g)$, for all $q\in
  Q$ and $g\in B_n$. It is easy to verify that the transition
  semigroup of the automaton $\mathcal{A}'_n$ is the semigroup $N$ and
  $\Gamma'$ is an \C\ of $N$ on $\mathcal{B}_6(\{1\})$. Denoting
  $\psi:B_n^+\to N$ the corresponding transition homomorphism, note
  that $\psi(x)=\varepsilon(X)$, $\psi(y)=\varepsilon(Y)$,
  $\psi(w_i)=\varepsilon(W_1)$ for $i$ odd, and
  $\psi(w_i)=\varepsilon(W_2)$ for $i$ even.
% The action of the letters $x$ and $y$ maps to
% $X$ and $Y$, respectively, while $w_1,w_3,\ldots,w_{n+1}$ map to $W_1$
% and $w_2,w_4,\ldots,w_{n+2}$ map to $W_2$.
% , and $M$ maps to
% $\mathcal{B}_6(\{1\})$.
%, from $\Sigma^{+}$ to the semigroup $N$.
%Now, let $\phi:S\rightarrow N$ be a function where, 
% if $[e_i,f_j] \sqsubseteq \alpha$ for some $e,f\in Q'$ and $1\leq i,j\leq 2^n$, then $[e,f] \sqsubseteq  \phi(\alpha)$ for every $\alpha\in S$ and $\phi(\overline{\theta})=\overline{\theta}$. It is easy to verify that if $z_1\cdots z_l\neq \overline{\theta}$ then $\phi(z_1)\cdots \phi(z_l)\neq \overline{\theta}$ for every $z_1,\ldots,z_l\in S$.

% Let $\Sigma'=\{x,y,w_1,\ldots,w_{n+2}\}$. Also
% let $\psi$ be the
% transition homomorphism from $\Sigma^{+}$ to the transition semigroup
% of the automaton $\mathcal{A}'$.

  \begin{lem}\label{NN}
    If $z\in B_n^{+}\setminus A_n$ and $z'\in A_n$, then we have
    $\psi(z)\ne\psi(z')$. Moreover, the following equalities hold:
    $\psi^{-1}(\varepsilon(X))=\{x\}$,
    $\psi^{-1}(\varepsilon(Y))=\{y\}$,
    $\psi^{-1}(\varepsilon(W_1))=\{w_1,w_3,\ldots,w_{n+1}\}$, and
    $\psi^{-1}(\varepsilon(W_2))=\{w_2,\ldots,w_{n+2}\}$.
  \end{lem}

  \begin{proof}
  % By calculating the Cayley table of $N$ using GAP one can check that
  % none of the elements
  % $\varepsilon(X),\varepsilon(Y),\varepsilon(W_1),\varepsilon(W_2)$
    As was already observed, none of the transformations
    in~\eqref{eq:XYW1W2} may be written as products of two elements.
    Also, under $\psi$,
  % the letters $x$ and $y$ map to $X$ and $Y$, respectively, all letters
  % $w_1,w_3,\ldots,w_{n+1}$ map to $W_1$, the letters
  % $w_2,w_4,\ldots,w_{n+2}$ map to $W_2$, and
    the subset $B_n\setminus A_n$ maps to $\mathcal{B}_6(\{1\})$.
  % from $\Sigma^{+}$ to the transition semigroup
  % of the automatons $\mathcal{A}'$, the semigroup $N$.
    Hence, if $z\in B_n^{+}\setminus A_n$ and $z'\in A_n$, then $z$
    and $z'$ are not identified by $\psi$.
  \end{proof}

% We let $\eta$ be the transition homomorphism from $\Sigma^{+}$ to the transition semigroup of the automaton $\mathcal{A}$. 
  Let $z\in A_n$ and $z_1,z_2\in B_n^{+}\setminus A_n$. We claim that
  $\eta(z)\neq\eta(z_1z_2)$. Indeed, suppose the equality holds. Then,
  the elements $z$ and $z_1z_2$ have the same action on the states of
  the automaton $\mathcal{A}_n$. So, they have the same action on the
  states of the automaton $\mathcal{A}'_n$. However, this contradicts
  Lemma \ref{NN}. Hence, we have the following corollary.

  \begin{cor}\label{N2nN2n}
    If $z\in B_n^{+}\setminus A_n$ and $z'\in A_n$, then we have
    $\eta(z)\ne\eta(z')$.\qed
  \end{cor}

% Now,
% by Corollary \ref{N2nN2n} and
  Since
  \begin{align*}
    \left\langle\bigl\{\bigr.\right.
    &\eta(t_{e_1,e_2}),\eta(t_{e_2,e_3}),\ldots,\eta(t_{e_{2^n-1},e_{2^n}}),
      \eta(t_{a_{2^n},a'_1}),\eta(t_{a'_{2^n},b_1}),\\
    &\eta(t_{b_{2^n},b'_1}),\eta(t_{b'_{2^n},c_1}),
      \eta(t_{c_{2^n},d_1}),\eta(t_{d_{2^n},a_1})
      \mid \left.\bigl.e\in \{a,a',b,b',c,d\}\bigr\}\right\rangle=M,
  \end{align*}
  the transition semigroup of the automaton $\mathcal{A}_n$ is the
  semigroup $N_{2^n}$.

  In the following lemma, we examine the $\lambda$ and $\rho$
  sequences, which is useful for testing whether the semigroups
  $N_{2^n}$ lie in~$\mathsf{NT}$.

  \begin{lem}\label{lambda-n+3}
    Suppose that the elements
    \begin{equation}\label{line-lambda-n+3}
      \delta_n\bigl(p,\lambda_{n+3}(\alpha,\beta,1,\gamma_1,\gamma_2,\ldots,
      \gamma_{n+2})\bigr),
      \delta_n\bigl(p',\rho_{n+3}(\alpha,\beta,1,\gamma_1,\gamma_2,\ldots,
      \gamma_{n+2})\bigr) 
    \end{equation}
    are defined in the automaton $\mathcal{A}_n$, for some
    $\alpha,\beta \in B_n^+$, $\gamma_1,\gamma_2,\ldots, \gamma_{n+2}
    \in B_n^*$ and $p,p'\in Q_n$. Then, one of the following
    conditions holds:
    \begin{enumerate}[label={(\roman*)}]
    \item\label{item:lambda-n+3-1}
      $\{\psi(\alpha),\psi(\beta)\}=\{\varepsilon(X),\varepsilon(Y)\}$,
      $\psi(\gamma_1)=\psi(\gamma_3)=\cdots=\psi(\gamma_{n+1})=\varepsilon(W_1)$ and
      $\psi(\gamma_2)=\psi(\gamma_4)=\cdots=\psi(\gamma_{n+2})=\varepsilon(W_2)$;
    \item\label{item:lambda-n+3-2} $\psi(\alpha)=\psi(\beta)=
      \begin{psmallmatrix}
        e\\
        e
      \end{psmallmatrix}$ 
      and $\psi(\gamma_i)\in\{
      \begin{psmallmatrix}
        e\\
        e
      \end{psmallmatrix},1\}$
      for all $1\leq i\leq n+2$ and some element
      $e\in Q$.
    \end{enumerate}
  \end{lem}
  
  \begin{proof}
    By (\ref{line-lambda-n+3}),
    $\delta_n(p,\lambda_2(\alpha,\beta,1,\gamma_1))$ and
    $\delta_n(p',\rho_{2}(\alpha,\beta,1,\gamma_1))$ the elements are
    also defined in the automaton $\mathcal{A}_n$. It follows that
    $\delta'_n(\phi'_n(p),\lambda_2(\alpha,\beta,1,\gamma_1))$ and
    $\delta'_n(\phi'_n(p'),\rho_{2}(\alpha,\beta,1,\gamma_1))$ are
    defined in the automaton $\mathcal{A}'_n$ and, thus, the
    inequalities
    \begin{displaymath}
      \lambda_{2}\bigl(\psi(\alpha),\psi(\beta),1,\psi(\gamma_1)\bigr)
      \neq 0\ne
      \rho_{2}\bigl(\psi(\alpha),\psi(\beta),1,\psi(\gamma_1)\bigr)
    \end{displaymath}
    hold in the semigroup $N$. See the appendix for a calculation
    which would be too cumbersome to carry out by hand
% if 
% $\lambda_{2}(\alpha,\beta,1,\gamma_1),
% \rho_{2}(\alpha,\beta,1,\gamma_1)\neq\overline{\theta}$, then
    checking that one of the following conditions holds:
    \begin{enumerate}[label={(\alph*)}]
    \item\label{item:lambda-n+3-3}
      $\{\psi(\alpha),\psi(\beta)\}=\{\varepsilon(X),\varepsilon(Y)\}$
      and $\psi(\gamma_1)=\varepsilon(W_1)$;
    \item\label{item:lambda-n+3-4} $\psi(\alpha)=\psi(\beta)=
      \begin{psmallmatrix}
        e\\
        e
      \end{psmallmatrix}
      $ 
      and $\psi(\gamma_1)\in\{
      \begin{psmallmatrix}
        e\\
        e
      \end{psmallmatrix}
      ,1\}$ for some state $e\in Q$.
    \end{enumerate}
    First, suppose that Condition~\ref{item:lambda-n+3-3} holds. We
    show that \ref{item:lambda-n+3-1} also holds. Note that we have
    \begin{align*}
      &\bigl\{\lambda_{2}\bigl(\psi(\alpha),\psi(\beta),1,\psi(\gamma_1)\bigr),
        \rho_{2}\bigl(\psi(\alpha),\psi(\beta),1,\psi(\gamma_1)\bigr)\bigr\}\\
      &\quad=\{\varepsilon(XYW_1YX),\varepsilon(YXW_1XY)\}.
    \end{align*}
    % Since $N=\mathcal{B}_6(\{1\})\cup
    % \{\varepsilon(X),\varepsilon(Y),\varepsilon(W_1),\varepsilon(W_2)\}$,
    By~\eqref{eq:N},
    the ranks of the elements $\varepsilon(XYW_1YX)$ and
    $\varepsilon(YXW_1XY)$ are both one and, thus, we have
    \begin{displaymath}
      \{\varepsilon(XYW_1YX),\varepsilon(YXW_1XY)\}
      =\left\{
        \begin{pmatrix}
          b'\\
          a
        \end{pmatrix},
        \begin{pmatrix}
          a'\\
          b
        \end{pmatrix}
      \right\}.
    \end{displaymath}
    Since
    $\lambda_{3}\bigl(\psi(\alpha),\psi(\beta),1,\psi(\gamma_1),\psi(\gamma_2)\bigr)
    \ne 0
    \ne\rho_{3}\bigl(\psi(\alpha),\psi(\beta),1,\psi(\gamma_1),\psi(\gamma_2)\bigr)$,
    it follows that $\psi(\gamma_2)=\varepsilon(W_2)$.
    Similarly, we have
    \begin{align*}
      &\bigl\{\lambda_{3}\bigl(\psi(\alpha),\psi(\beta),1,\psi(\gamma_1),\psi(\gamma_2)\bigr),
        \rho_{3}\bigl(\psi(\alpha),\psi(\beta),1,\psi(\gamma_1),\psi(\gamma_2)\bigr)\bigr\}\\
      &\quad=\left\{\begin{pmatrix}
          a'\\
          a
        \end{pmatrix},
      \begin{pmatrix}
        b'\\
        b
      \end{pmatrix}
      \right\},
    \end{align*}
    which yields the equality $\psi(\gamma_3)=\varepsilon(W_1)$. Also,
    similarly, we obtain the equalities
    $\psi(\gamma_5)=\psi(\gamma_7)=\cdots=\psi(\gamma_{n+1})=\varepsilon(W_1)$
    and
    $\psi(\gamma_4)=\psi(\gamma_6)=\cdots=\psi(\gamma_{n+2})=\varepsilon(W_2)$.
    
    Now, suppose that Condition~\ref{item:lambda-n+3-4} holds. We
    claim that so does~\ref{item:lambda-n+3-2}. Indeed, we then have
    \begin{displaymath}
      \bigl\{\lambda_{2}\bigl(\psi(\alpha),\psi(\beta),1,\psi(\gamma_1)\bigr),
      \rho_{2}\bigl(\psi(\alpha),\psi(\beta),1,\psi(\gamma_1)\bigr)\bigr\}
      =\left\{\begin{pmatrix}
          e\\
          e
        \end{pmatrix}
      \right\}.
    \end{displaymath}
    %Since $N=\mathcal{B}_6(\{1\})\cup
    %\{\varepsilon(X),\varepsilon(Y),\varepsilon(W_1),\varepsilon(W_2)\}$,
    By~\eqref{eq:N} and \eqref{eq:XYW1W2},
    if $\Gamma'(Z)(e)=e$, for some $Z\in N$, then $Z\in
    \mathcal{B}_6(\{1\})$. Now, as
    \begin{displaymath}
      \lambda_{3}\bigl(\psi(\alpha),\psi(\beta),1,\psi(\gamma_1),\psi(\gamma_2)\bigr)
      \neq 0\ne
      \rho_{3}\bigl(\psi(\alpha),\psi(\beta),1,\psi(\gamma_1),\psi(\gamma_2)\bigr),
    \end{displaymath}
    we obtain $\psi(\gamma_2)\in\{\begin{psmallmatrix}
      e\\
      e
    \end{psmallmatrix},
    1\}$. Similarly, we have
    $\psi(\gamma_i)\in\{\begin{psmallmatrix}
      e\\
      e
    \end{psmallmatrix},1\}$ for all $3\leq i\leq n+2$.
  \end{proof}
  
%Now, we come to the part of the proof of Theorem~\ref{PS NT} where we consider the semigroup $N_{2^n}$.
%To avoid misleading the reader,
% to follow the proof,
  To continue our investigation, we need the following lemma, which is
  easy to prove by induction.

  \begin{lem}\label{lambda-rho}
    Let
    \begin{align*}
      \lambda_{i}
      &=\lambda_{i}(x, y, 1, w_{1}, w_{2}, \ldots,w_n,
        w_{n+1},w_{n+2},w_{n+1},w_{n+2},\ldots),\\
      \rho_{i}
      &=\rho_{i}(x, y, 1, w_{1}, w_{2}, \ldots,w_n,
        w_{n+1},w_{n+2},w_{n+1},w_{n+2},\ldots).
    \end{align*}
    Then, the following equalities hold:
    \begin{align*}
      \bigl(\eta(\lambda_{1}),\eta(\rho_{1})\bigr)
      &=\left(
        \begin{pmatrix}
          b'_1&b'_2&\ldots&b'_{2^{n}}\\
          b_1 &b_2 &      &b_{2^n}
        \end{pmatrix},
    \begin{pmatrix}
      a'_1&a'_2&\ldots&a'_{2^{n}}\\
      a_1 &a_2 &      &a_{2^n}
    \end{pmatrix}
    \right),\\
      \bigl(\eta(\lambda_{i}),\eta(\rho_{i})\bigr)
      &=\bigg(
        \begin{pmatrix}
          b'_{2^{n-1}+\cdots+2^{n-(i-1)}+1}&\ldots&b'_{2^n}\\
          a_1                              &      &a_{2^{n-(i-1)}}
        \end{pmatrix},\\
      &\qquad
        \begin{pmatrix}
          a'_{2^{n-1}+\cdots+2^{n-(i-1)}+1}&\ldots&a'_{2^n}\\
          b_1                              &      &b_{2^{n-(i-1)}}
        \end{pmatrix}
    \bigg),\\
      &\qquad\mbox{for even}\ 2\leq i\leq n-2,\\
      \bigl(\eta(\lambda_{i}),\eta(\rho_{i})\bigr)
      &=\bigg(
        \begin{pmatrix}
          b'_{2^{n-1}+\cdots+2^{n-(i-1)}+1}&\ldots&b'_{2^n}\\
          b_1                          &      &b_{2^{n-(i-1)}}
        \end{pmatrix},\\
      &\qquad
        \begin{pmatrix}
          a'_{2^{n-1}+\cdots+2^{n-(i-1)}+1}&\ldots&a'_{2^n}\\
          a_1                           &      &a_{2^{n-(i-1)}}
        \end{pmatrix}
    \bigg),\\
      &\qquad\mbox{for odd}\ 3\leq i\leq n-1,\\
      \bigl(\eta(\lambda_{n}),\eta(\rho_{n})\bigr)
      &=\left(
        \begin{pmatrix}
          b'_{2^n-1}&b'_{2^n}\\
          a_{1}     &a_{2}
        \end{pmatrix},
       \begin{pmatrix}
         a'_{2^n-1}&a'_{2^n}\\
         b_{1}     &b_{2}
       \end{pmatrix}
    \right),\\
      \bigl(\eta(\lambda_i),\eta(\rho_i)\bigr)
      &=\left(
        \begin{pmatrix}
          b'_{2^n}\\
          b_{1}
        \end{pmatrix},
      \begin{pmatrix}
        a'_{2^n}\\
        a_{1}
      \end{pmatrix}
  \right),\ \mbox{for odd}\ n< i,\ \mbox{and}\\
      \bigl(\eta(\lambda_{i}),\eta(\rho_{i})\bigr)
      &=\left(
        \begin{pmatrix}
          b'_{2^n}\\
          a_{1}
        \end{pmatrix},
      \begin{pmatrix}
        a'_{2^n}\\
        b_{1}
      \end{pmatrix}
      \right),\ \mbox{for even}\ n< i.
    \end{align*}
  \end{lem}
  
  By Lemma \ref{lambda-rho}, we conclude that the subsemigroup
% $$\langle x, y, w_{1}, w_{2}, \ldots, w_{n+1},w_{n+2}\rangle$$
  $\langle\eta(A_n)\rangle$
  of $N_{2^n}$ is not in $\mathsf{NT}$.
% Also, we have
% $$z\not \in \langle x, y, w_{1}, w_{2}, \ldots,
% w_{n+1},w_{n+2}\rangle\setminus ,$$
% for every $z\in \{x, y, w_{1},
% w_{2}, \ldots, w_{n+1},w_{n+2}\}$ in $N_{2^n}$.
  Also, by Corollary \ref{N2nN2n}, no subsemigroup of $N_{2^n}$
  containing $\eta(A_n)$ may be generated by less than $m=n+4$
  elements. To proceed, we prove that
  \begin{equation}
    \begin{split}
      \parbox[t]{.8\textwidth}{if $U$ is a subsemigroup
        of $N_{2^n}$ and $U$ is not in $\mathsf{NT}$, then $U$ contains
        the set $\eta(A_n)$.}
    \end{split}
    \label{U}
  \end{equation} 
  This shows that all $(m-1)$-generated subsemigroups of $N_{2^{n}}$
  are in $\mathsf{NT}$ and, thereby, (\ref{U}) is enough to complete the
  proof of Theorem~\ref{PS NT}.

  So, suppose that $U\notin\mathsf{NT}$ is a subsemigroup
  of~$N_{2^n}$. Since $U\not\in\mathsf{NT}$, there exist letters $r_1,
  r_2\in B_n^+$ and $v_{1}, v_{2}, \ldots,v_{n+2}\in B_n^*$, such that
  \begin{equation}
    \label{eq:U-non-NT-1}
    \eta(r_1), \eta(r_2), \eta(v_{1}),\ldots, \eta(v_{n+2})\in U,
  \end{equation}
  \begin{equation}
    \label{eq:U-non-NT-2}
    \parbox[t]{.8\textwidth}{the elements
      $\lambda_{n+3}\bigl(\eta(r_1), \eta(r_2), 1, \eta(v_{1})
      \eta(v_{2}), \ldots, \eta(v_{n+2})\bigr)$ and
      $\rho_{n+3}\bigl(\eta(r_1), \eta(r_2), 1, \eta(v_{1}),
      \eta(v_{2}), \ldots, \eta(v_{n+2})\bigr)$
      of~$U$ are nonzero and distinct.}
  \end{equation}
  Therefore, there exist elements
  $f,g,f',g'\in Q$ and integers $1\leq i_1,i_2,j_1,j_2\leq 2^n$ such
  that
  $\delta_n(f_{i_1},\lambda_{n+3})=g_{j_1}$,
  $\delta_n(f'_{i_2},\rho_{n+3})=g'_{j_2}$ and if
  $\delta_n(f_{i_1},\rho_{n+3})$ is defined then
  $\delta_n(f_{i_1},\rho_{n+3})\neq g_{j_1}$,
  where
  \begin{displaymath}
    \lambda_{k}=\lambda_{k}(r_1, r_2, 1, v_{1}, v_{2}, \ldots, v_{k-1})\ \text{and}\
    \rho_{k}=\rho_{k}(r_1, r_2, 1, v_{1}, v_{2}, \ldots, v_{k-1})
  \end{displaymath}
  for every $1\leq k\leq n+3$. 
%Since $\{x\}=\phi^{-1}(X)$, $\{y\}=\phi^{-1}(Y)$, $\{w_1,w_3,\ldots\}=\phi^{-1}(W_1)$, $\{w_2,w_4,\ldots\}=\phi^{-1}(W_2)$ and 
  Therefore, by Lemmas \ref{NN} and \ref{lambda-n+3}, one of the
  following conditions holds:
  \begin{enumerate}[start=1,label={\bfseries (I\arabic*)}]
  \item\label{I1} we have $\{r_1,r_2\}=\{x,y\}$,
    $v_1,v_3,\ldots,v_{n+1}\in\{w_1,w_3,\ldots,w_{n+1}\}$, and
    $v_2,v_4,\ldots,v_{n+2}\in\{w_2,w_4,\ldots,w_{n+2}\}$;
  \item\label{I2} there exists an element $e\in Q$ such that, if
    $\delta_n(s,f)=t$ for some $f\in \{r_1,r_2,v_1,\ldots, v_{n+2}\}$
    and some $s,t\in Q_n$ with $f\neq 1$, then there exist integers
    $1\leq l_1,l_2\leq 2^n$ such that $s=e_{l_1}$ and $t=e_{l_2}$.
  \end{enumerate} 

  First, we assume that Case \ref{I1} holds. Suppose that there exists
  the least integer $k\in\{1,\ldots,n+2\}$ such that $v_k\neq w_k$.
  Then, for $1\leq i\leq k$, the sequences $\eta(\lambda_i)$ and
  $\eta(\rho_i)$ are given by Lemma \ref{lambda-rho}. Since $v_k\in
  \{w_1,w_2,\ldots,w_{n+2}\}$, there exists an integer $k'$ such that
  $v_k=w_{k'}$. Also, since
  $v_1,v_3,\ldots,v_{n+1}\in\{w_1,w_3,\ldots,w_{n+1}\}$ and
  $v_2,v_4,\ldots,v_{n+2}\in\{w_2,w_4,\ldots,w_{n+2}\}$, we have
  $k'\not\in \{k-1,k+1\}$. We claim that it is impossible that $k\ne
  k'$, which contradicts the assumptions $w_k\ne v_k=w_{k'}$. To
  establish the claim, we distinguish several cases, taking into
  account the choice of $\eta(w_k)$ at the beginning of the proof of
  Theorem~\ref{PS NT}. In each case, we show that
  $\eta(\lambda_{k+1})=0$ or $\eta(\lambda_{k+2})=0$, which
  contradicts our assumption that $\eta(\lambda_{n+3})\neq 0$
  (see~\eqref{eq:U-non-NT-2}). For that purpose, we take into account
  without further reference the definition of $\eta(w_{k'})$ and
  Lemma~\ref{lambda-rho}. Here are the cases in question:
  \begin{itemize}
  \item $k'< k\leq n$: since $k'< k$, we have $2^{n-(k-1)}<
    2^{n-k'}+1$; we deduce that $\eta(\lambda_{k+1})= 0$.
  \item $k'< k= n+1$: since $k'< n+1$, we have $1< 2^{n-k'}+1$ and,
    thus, we get $\eta(\lambda_{n+2})= 0$.
  \item $k'< k= n+2$: if $k'< n+1$, then we have $1< 2^{n-k'}+1$ so
    that $\eta(\lambda_{n+3})= 0$; also, if $k'= n+1$, since $k'\neq
    k-1$, then we have $\eta(\lambda_{n+3})= 0$.
  \item $k < k' \le n$:
    since $k < k'$, we have
    \begin{align*}
      \lefteqn{\quad\bigl(\eta(\lambda_{k+1}),\eta(\rho_{k+1})\bigr)}
      \\
      &\qquad=\Bigg(
        \begin{pmatrix}
          e_{2^{n-1}+\cdots+2^{n-(k-1)}+2^{n-k'}+1}&\ldots&e_{2^{n-1}+\cdots+2^{n-(k-1)}+2^{n-(k'-1)}}\\
          f_{2^{n-k}+\cdots+2^{n-(k'-1)}+1}        &
          &f_{2^{n-k}+\cdots+2^{n-k'}}
        \end{pmatrix},\\
      &\qquad\quad\quad
        \begin{pmatrix}
          g_{2^{n-1}+\cdots+2^{n-(k-1)}+2^{n-k'}+1}&\ldots&g_{2^{n-1}+\cdots+2^{n-(k-1)}+2^{n-(k'-1)}}\\
          h_{2^{n-k}+\cdots+2^{n-(k'-1)}+1}        &      &h_{2^{n-k}+\cdots+2^{n-k'}}
        \end{pmatrix}
       \Bigg),
    \end{align*}
    for some elements $e,f,g,h\in\{a,a',b,b'\}$. Since $k'\neq k+1$,
    the inequalities $2^{n-k}+1< 2^{n-k}+\cdots+2^{n-(k'-1)}+1$ and
    $2^{n-k}+\cdots+2^{n-k'}<2^{n-(k-1)}$ hold. There exists an integer $k''$ such that
    $v_{k+1}=w_{k''}$.
    We have 
    \begin{align*}
      \lefteqn{\quad\quad \eta(\lambda_{k+1}w_{k''})}
      \\
      &\qquad=
        \begin{pmatrix}
          e_{2^{n-1}+\cdots+2^{n-(k-1)}+2^{n-k'}+1}&\ldots&e_{2^{n-1}+\cdots+2^{n-(k-1)}+2^{n-(k'-1)}}\\
          f_{2^{n-k}+\cdots+2^{n-(k'-1)}+1}        &
          &f_{2^{n-k}+\cdots+2^{n-k'}}
        \end{pmatrix}\\
      &\qquad\qquad\left(\begin{matrix}
          a''_{2^{n-k''}+1}                    &\ldots&a''_{2^{n-(k''-1)}}\\
          b'_{2^{n-1}+\cdots+2^{n-(k''-1)}+1}  &      &b'_{2^{n-1}+\cdots+2^{n-k''}}
        \end{matrix}\right.\\
    &\qquad\qquad\qquad\qquad\qquad\qquad
      \left.\begin{matrix}
          b''_{2^{n-k''}+1}                    &\ldots&b''_{2^{n-(k''-1)}} \\
          a'_{2^{n-1}+\cdots+2^{n-(k''-1)}+1}  &      &a'_{2^{n-1}+\cdots+2^{n-k''}}
        \end{matrix}\right),
    \end{align*}
    for some $a'',b''\in\{a,b\}$. If $k<k''$, then
    $2^{n-(k''-1)}<2^{n-k}+1< 2^{n-k}+\cdots+2^{n-(k'-1)}+1$ which
    entails $\eta(\lambda_{k+1}w_{k''})=0$. Also, if $k''<k$, then
    $2^{n-k}+\cdots+2^{n-k'}<2^{n-(k-1)}<2^{n-k''}+1$ and, thus,
    again, we obtain $\eta(\lambda_{k+1}w_{k''})=0$. We conclude that
    $k''=k$ and $v_{k+1}=w_k$. It follows that
    \begin{align*}
      \eta(\lambda_{k+1}w_k)
      =&\left(\begin{matrix}
          e_{2^{n-1}+\cdots+2^{n-(k-1)}+2^{n-k'}+1}                       &\ldots\\
          q_{2^{n-1}+\cdots+2^{n-(k-1)}+2^{n-(k+1)}+\cdots+2^{n-(k'-1)}+1}&      
        \end{matrix}\right.\\
       &\qquad\qquad\quad\left.\begin{matrix}
           e_{2^{n-1}+\cdots+2^{n-(k-1)}+2^{n-(k'-1)}}\\
           q_{2^{n-1}+\cdots+2^{n-(k-1)}+2^{n-(k+1)}+\cdots+2^{n-k'}}
         \end{matrix}\right),
    \end{align*}
    for some element $q\in\{a,a',b,b'\}$. 
    Since $k'\neq k+1$, we have
    \begin{align*}
      &2^{n-1}+\cdots+2^{n-(k-1)}+2^{n-(k'-1)}\\
      &< 2^{n-1}+\cdots+2^{n-(k-1)}+2^{n-(k+1)}+\cdots+2^{n-(k'-1)}+1,
    \end{align*}
    and, thus, $\eta(\lambda_{k+2})= 0$.
  \item $k< k'=n+1$:
    since $k< k'$, we have $k\leq n$, which yields the equality
    \begin{align*}
      \bigl(\eta(\lambda_{k+1}),\eta(\rho_{k+1})\bigr)
      =\Bigg(&
               \begin{pmatrix}
                 e_{2^{n-1}+\cdots+2^{n-(k-1)}+1}\\
                 f_{2^{n-(k-1)}}                 
               \end{pmatrix},
      \begin{pmatrix}
        g_{2^{n-1}+\cdots+2^{n-(k-1)}+1}\\
        h_{2^{n-(k-1)}}
      \end{pmatrix}
      \Bigg),
    \end{align*}
    for some elements $e,f,g,h\in\{a,a',b,b'\}$. There exists an integer $k''$ such that
    $v_{k+1}=w_{k''}$.
    We have 
    \begin{align*}
      \lefteqn{\quad\quad \eta(\lambda_{k+1}w_{k''})}
      \\
      &\qquad=
        \begin{pmatrix}
                 e_{2^{n-1}+\cdots+2^{n-(k-1)}+1}\\
                 f_{2^{n-(k-1)}}                 
               \end{pmatrix}\left(\begin{matrix}
          a''_{2^{n-k''}+1}                    &\ldots&a''_{2^{n-(k''-1)}}\\
          b'_{2^{n-1}+\cdots+2^{n-(k''-1)}+1}  &      &b'_{2^{n-1}+\cdots+2^{n-k''}}
        \end{matrix}\right.\\
    &\qquad\qquad\qquad\qquad\qquad\qquad\qquad
      \left.\begin{matrix}
          b''_{2^{n-k''}+1}                    &\ldots&b''_{2^{n-(k''-1)}} \\
          a'_{2^{n-1}+\cdots+2^{n-(k''-1)}+1}  &      &a'_{2^{n-1}+\cdots+2^{n-k''}}
        \end{matrix}\right),
    \end{align*}
    for some $a'',b''\in\{a,b\}$. If $k<k''$, then
    $2^{n-(k''-1)}<2^{n-(k-1)}$, which yields the equality
    $\eta(\lambda_{k+1}w_{k''})=0$. Also, if $k''<k$, then
    $2^{n-(k-1)}<2^{n-k''}+1$ and, thus, again, we obtain
    $\eta(\lambda_{k+1}w_{k''})=0$. This shows that $k''=k$, so that
    $v_{k+1}=w_k$. It follows that
    \begin{align*}
      \lefteqn{\eta(\lambda_{k+1}w_k\rho_{k+1})}\\
      &\quad=\begin{pmatrix}
        e_{2^{n-1}+\cdots+2^{n-(k-1)}+1}\\
        f_{2^{n-(k-1)}}                 
      \end{pmatrix}
      \begin{pmatrix}
        e'_{2^{n-(k-1)}}           &g'_{2^{n-(k-1)}}\\
        f'_{2^{n-1}+\cdots+2^{n-k}}&h'_{2^{n-1}+\cdots+2^{n-k}}
      \end{pmatrix}
      \\ &\qquad\qquad % \qquad\qquad\qquad\qquad
           \begin{pmatrix}
             g_{2^{n-1}+\cdots+2^{n-(k-1)}+1}\\
             h_{2^{n-(k-1)}}
           \end{pmatrix}
      = 0,
    \end{align*}
    for some letters $e',f',g',h'\in\{a,a',b,b'\}$, which shows that
    $\eta(\lambda_{k+2})= 0$.
  \item $k<k'=n+2$:
    this case is handled similarly to the preceding one.
    % quite similar to the previous part, this part has been established.
  \end{itemize}
  Hence, we must have 
  \begin{equation}
  v_1=w_1,\ldots, v_{n+2}=w_{n+2}.
  \label{v_1=w_1}
  \end{equation}
%  We recall that the aim is to prove~\eqref{U}, for
%  which we have considered a subsemigroup $U$ of $N_{2^n}$ not
%  in~$\mathsf{NT}$. 
  Now, in Case~\ref{I1}, we
  have $\{r_1,r_2\}=\{x,y\}$ and (\ref{v_1=w_1}) holds. It follows from~(\ref{eq:U-non-NT-1})
  that the subsemigroup $U$ contains $\eta(A_n)$, which
  establishes~\eqref{U}. 
  
  To complete the proof of Theorem~\ref{PS NT}, we proceed to consider
  Case~\ref{I2}. It turns out to be impossible as we show that it
  leads to a contradiction.

  Now, we assume that Case \ref{I2} holds.
% Hence, we have $r_1,r_2\neq 1$ and 
% there exists an element $e\in Q'$ such that, if $\Gamma(f)(s)=t$ for
% some $f\in \{r_1,r_2,v_1,\ldots, v_{n+2}\}$ and some $s,t\in Q$ with
% $f\neq 1$, then there exist integers $1\leq l_1,l_2\leq 2^n$ such
% that $s=e_{l_1}$ and $t=e_{l_2}$.
  Let $s\in \{r_1,r_2,v_1,\ldots,v_{n+2}\}$. 
% Since $N'$ is the transition semigroup of the automaton $\mathcal{N}$,
% we have
  Taking into account how $s$ acts in the quotient automaton
  $\mathcal{A}'_n$, where it labels a loop at the state $e$, we deduce
  that either there is a factorization
  \begin{equation}
    s=\xi_1\zeta_1\xi_2\cdots\zeta_{n_s}\xi_{n_s+1}
    \label{eq:factorization}
  \end{equation}
  satisfying the
  following conditions:
  \begin{enumerate}
  \item $1\leq n_s$;
  \item $\xi_1, \xi_{n_s+1}\in \langle x,y\rangle^1$;
  \item $\xi_2,\ldots, \xi_{n_{s}}\in \langle x,y\rangle$;
  \item $\zeta_1,\ldots, \zeta_{n_{s}}\in \{w_1,\ldots,w_{n+2}\}$,
  \end{enumerate}
  or a letter of $B_n\setminus A_n$ intervenes in $s$, or $s=1$.
  
  To continue our treatment of Case \ref{I2}, we need to consider
  another quotient of the automaton $\mathcal{A}_n$, this time
  dropping the letters and retaining just the indices of the states.
% and the following lemma.
  Let $I_n=\{1,\ldots,2^n\}$ and let $\mathcal{A}''_n=(I_n,B_n,\mu_n)$
  be the quotient of the automaton $\mathcal{A}_n$ given by the
  morphism $\phi''_n:\mathcal{A}_n\rightarrow \mathcal{A}''_n$ where
  $\phi''_n(e_i)=i$
% and $\phi'(\theta)=\theta$,
  for every $e\in Q$ and $i\in I_n$. Let $\kappa_n$ be transition
  homomorphism for the automaton~$\mathcal{A}''_n$.

  \begin{lem}\label{A''}
    There exists a permutation $\sigma\in S_{2^n}$ such that
    % in the automaton~$\mathcal{A}''$ we have
    \begin{align*}
      \kappa_n(w_i)
      &=
        \begin{pmatrix}
          \sigma(2^{i-1})  &\sigma(2^{i-1}+2^i)  &\ldots&\sigma(2^{i-1}+(2^{n-i}-1)\times 2^{i})\\
          \sigma(2^{i-1}+1)&\sigma(2^{i-1}+2^i+1)&      &\sigma(2^{i-1}+(2^{n-i}-1)\times 2^i+1)
        \end{pmatrix},\\
      \kappa_n(w_{n})
      &=\begin{pmatrix}
        \sigma(2^{n-1})\\
        \sigma(2^{n-1}+1)
      \end{pmatrix},\ 
      \kappa_n(w_{n+1})=\kappa_n(w_{n+2})=
      \begin{pmatrix}
        \sigma(2^{n})\\
        \sigma(1)
      \end{pmatrix},\\
    \end{align*} 
    for every integer $1\leq i< n$.
  \end{lem}

  \begin{proof}
    We proceed by induction on $n$.
    
    For $n=1$, let $\sigma\in S_2$ with $\sigma(1)=2$ and $\sigma(2)=1$.
    Since the equalities $\kappa_1(w_1)=\begin{psmallmatrix}
      2\\
      1
    \end{psmallmatrix}$ and
    $\kappa_1(w_2)=\kappa_1(w_3)=\begin{psmallmatrix}
      1\\
      2
    \end{psmallmatrix}$ hold, the result is
    satisfied for $n=1$.

    Now, we suppose that $\sigma\in S_{2^n}$ is a permutation such
    that the statement of the lemma holds for~$n$.
% Hence, there
% exists a permutation $\sigma\in S_{2^n}$ such that
% $\sigma(1),\ldots,\sigma(2^n)$ satisfy the statement.
    We define a permutation $\sigma'\in S_{2^{n+1}}$ as follows:
    $\sigma'(2k)=\sigma(k)$ and $\sigma'(2k-1)=\sigma(k)+2^n$, for all
    $1\leq k\leq 2^n$. To establish that $\sigma'$ has the required
    property for the next value of $n$, we need to relate the actions
    $\kappa_{n+1}(w_{i+1})$ of $w_{i+1}$ in $\mathcal{A}''_{n+1}$ and
    $\kappa_n(w_i)$ of $w_i$ in $\mathcal{A}''_n$:
    \begin{align*}
      \kappa_{n+1}(w_{i+1})
      &=\left(\begin{matrix}
          2^{n-i}+1                       &2^{n-i}+2                     &\ldots    \\
          2^n+2^{n-1}+\cdots+2^{n-(i-1)}+1 &2^n+2^{n-1}+\cdots+2^{n-(i-1)}+2      
        \end{matrix}\right.\\
      &\qquad\qquad\qquad\qquad\qquad\qquad\qquad\qquad\qquad
        \left.\begin{matrix}
            2^{n-(i-1)}   \\
            2^n+2^{n-1}+\cdots+2^{n-i}
          \end{matrix}\right),\\
      \kappa_n(w_{i})
      &=\left(\begin{matrix}
          2^{n-i}+1                   &2^{n-i}+2                   &\ldots\\
          2^{n-1}+\cdots+2^{n-(i-1)}+1 &2^{n-1}+\cdots+2^{n-(i-1)}+2&   
        \end{matrix}\right.\\   
      &\qquad\qquad\qquad\qquad\qquad\qquad\qquad\qquad\qquad\qquad
        \left.\begin{matrix}
            2^{n-(i-1)}\\
            2^{n-1}+\cdots+2^{n-i}
          \end{matrix}\right).
    \end{align*}
    Then, from the induction hypothesis, we get
    \begin{align*}
      \kappa_{n+1}(w_{i+1})
      &=\begin{pmatrix}
        \sigma(2^{i-1})      &\ldots&\sigma(2^{i-1}+(2^{n-i}-1)\times 2^{i})    \\
        \sigma(2^{i-1}+1)+2^n&      &\sigma(2^{i-1}+(2^{n-i}-1)\times 2^i+1)+2^n
      \end{pmatrix}\\
      &=\begin{pmatrix}
        \sigma'(2^{i})  &\ldots&\sigma'(2^{i}+(2^{(n+1)-(i+1)}-1)\times 2^{i+1})\\
        \sigma'(2^{i}+1)&      &\sigma'(2^{i}+(2^{(n+1)-(i+1)}-1)\times 2^{(i+1)}+1)
      \end{pmatrix},
    \end{align*} 
    for $1\leq i\leq n$. 

    Also, since $\kappa_{n+1}(w_1)=\begin{psmallmatrix}
      2^{n}+1&\ldots&2^{n+1}\\
      1 & &2^{n}
    \end{psmallmatrix}$ and $1\leq \sigma(1),\ldots,\sigma(2^n)\leq 2^n$, we have
    \begin{align*} 
      \kappa_{n+1}(w_1)
      &=\begin{pmatrix}
        \sigma(1)+2^n&\sigma(2)+2^n&\ldots&\sigma(2^n)+2^n\\
        \sigma(1)    &\sigma(2)    &      &\sigma(2^n)
      \end{pmatrix}\\
      &=\begin{pmatrix}
        \sigma'(1)&\sigma'(3)&\ldots&\sigma'(2^{n+1}-1)\\
        \sigma'(2)&\sigma'(4)&      &\sigma(2^{n+1})
      \end{pmatrix}.
    \end{align*} 
    Similarly, we have
    $\kappa_{n+1}(w_{n+2})=\kappa_{n+1}(w_{n+3})=
    \begin{psmallmatrix}
      \sigma'(2^{n+1})\\
      \sigma'(1)
    \end{psmallmatrix}$.
    The result follows.
  \end{proof}

  By Lemma \ref{A''}, the automaton $\mathcal{A}''_n$ is a cycle. For
  example, the automaton $\mathcal{A}''_4$ is drawn in
  Figure~\ref{fig:Adoubleprime4}.
  \begin{figure}[ht]
    \begin{center}\scriptsize
      \begin{tikzpicture}[x=1.2mm,y=1.2mm,thick,->,>=stealth',shorten
        >=1pt]
        \node [state] (1) at (48,20) {$\sigma(1)$}; % {$16$}; %
        \node [state] (2) at (63,13) {$\sigma(2)$}; % {$8$}; %
        \node [state] (3) at (75,03) {$\sigma(3)$}; % {$12$}; %
        \node [state] (4) at (80,-12) {$\sigma(4)$}; %{$4$}; %
        \node [state] (5) at (80,-28) {$\sigma(5)$}; %{$14$}; %
        \node [state] (6) at (75,-45) {$\sigma(6)$}; %{$6$}; %
        \node [state] (7) at (63,-55) {$\sigma(7)$}; %{$10$}; %
        \node [state] (8) at (48,-60) {$\sigma(8)$}; %{$2$}; %
        \node [state] (9) at (31,-60) {$\sigma(9)$}; %{$15$}; %
        \node [state] (10) at(16,-55) {$\sigma(10)$}; %{$7$}; %
        \node [state] (11) at(5,-45)  {$\sigma(11)$}; %{$11$}; %
        \node [state] (12) at(0,-28)  {$\sigma(12)$}; %{$3$}; %
        \node [state] (13) at(0,-12)  {$\sigma(13)$}; %{$13$}; %
        \node [state] (14) at(5,03)   {$\sigma(14)$}; %{$5$}; %
        \node [state] (15) at(16,13)  {$\sigma(15)$}; %{$9$}; %
        \node [state] (16) at(31,20)  {$\sigma(16)$}; %{$1$}; %
        
        \path (1) edge[above] node {$w_1$} (2) 
        (2) edge[above] node {$\ w_2$} (3) 
        (3) edge[right] node {$w_1$} (4) 
        (4) edge[right] node {$w_3$} (5) 
        (5) edge[right] node {$w_1$} (6) 
        (6) edge[right]  node {$\ w_2$} (7) 
        (7) edge[below] node {$\quad w_1$} (8) 
        (8) edge[below] node {$w_4$} (9) 
        (9) edge[below] node {$w_1$} (10)
        (10) edge[below] node {$w_2\quad$} (11)
        (11) edge[left] node {$w_1$} (12)
        (12) edge[left] node {$w_3$} (13)
        (13) edge[left] node {$w_1$} (14)
        (14) edge[above] node {$w_2\quad$} (15)
        (15) edge[above] node {$w_1\quad$} (16)
        (16) edge[above] node {$w_5,w_6$} (1);
      \end{tikzpicture}
    \end{center}
    \caption{The automaton $\mathcal{A}''_4$}
    \label{fig:Adoubleprime4}
  \end{figure}
%\begin{tabular}{ |rrrr }
%  $\sigma(1)=$, & $\sigma(2)=$, & $\sigma(3)=$,& $\sigma(4)=$,\\
%  $\sigma(5)=$,& $\sigma(6)=$,& $\sigma(7)=$,& $\sigma(8)=$, \\
%  $\sigma(9)=$, & $\sigma(10)=$, & $\sigma(11)=$,& $\sigma(12)=$,\\
%  $\sigma(13)=$,& $\sigma(14)=$,& $\sigma(15)=$,& $\sigma(16)=$.
%\end{tabular}

  We come back to Case \ref{I2}. Let $\sigma\in S_{2^n}$ be the
  permutation given by Lemma \ref{A''} for the automaton
  $\mathcal{A}''_n$ and $\sigma^{-1}$ be the inverse of the
  permutation~$\sigma$. Since
  $\delta_n(f_{i_1},\lambda_{n+3})=g_{j_1}$ and
  $\delta_n(f'_{i_2},\rho_{n+3})=g'_{j_2}$, the condition of
  Case~\ref{I2} implies that there exist integers $1\leq
  l_1,l_2,l_3,l_4\leq 2^n$ such that
  $\delta_n(e_{l_1},\lambda_n)=e_{l_2}$ and
  $\delta_n(e_{l_3},\rho_n)=e_{l_4}$. Hence, we have
  $\mu_n(l_1,\lambda_n)=l_2$ and $\mu_n(l_3,\rho_n)=l_4$. Now, we
  claim that
  \begin{equation}
    \eta(\lambda_n)=\begin{pmatrix}
      e_{l_1}\\
      e_{l_2}
    \end{pmatrix}
    \text{ and }
    \eta(\rho_n)= \begin{pmatrix}
      e_{l_3}\\
      e_{l_4}
    \end{pmatrix}.
    \label{eq:lamdan-rhon-rank-1}
  \end{equation}
  If a letter of $B_n\setminus A_n$ intervenes in any element of the
  set $\{r_1,r_2,v_1,\ldots,v_{n-1}\}$ then the claim clearly holds.
  Now, suppose that no letter of $B_n\setminus A_n$ intervenes in any
  element of the set $\{r_1,r_2,v_1,\ldots,v_{n-1}\}$. Since
  $r_1,r_2\neq 1$, we may consider the numbers $\nu(r_i)$ of factors
  $w_j$ ($1\le j\le n+2$) that intervene in the corresponding
  factorization~\eqref{eq:factorization} of~$r_i$. Note that
  $\nu(r_i)>0$ because $n_{r_i}\ge1$.
% we have $\abs{r_1}\cap
% \{w_1,\ldots,w_{n+2}\},\abs{r_2}\cap \{w_1,\ldots,w_{n+2}\}\neq
% \emptyset$ where $\abs{r_1},\abs{r_2}$ are the sets of letters which
% intervene in $r_1$ and $r_2$, respectively.
  Hence, the numbers of occurrences of elements of the set
  $\{w_1,\ldots,w_{n+2}\}$ as factors in factorizations of $\lambda_n$
  and $\rho_n$ are at least~$2^n$. We have $\mu_n(i,z)=i$, for every
  integer $1\leq i\leq 2^n$ and an element $z\in\{x,y\}$. Also, if
  $\mu_n(i,z)$ is defined for some integer $1\leq i\leq 2^n$ and
  element $z\in\{w_1,\ldots,w_{n+2}\}$, then
  $\sigma^{-1}(\mu_n(i,z))=\sigma^{-1}(i)+1$. Now, as
  $\mu_n(l_1,\lambda_n)$ and $\mu_n(l_3,\rho_n)$ are defined and the
  automaton $\mathcal{A}''_n$ is a cycle of length $2^n$, the
  sequences $\lambda_n$ and $\rho_n$ go through the cycle of
  $\mathcal{A}''_n$ at least once. Therefore, we have
  \begin{align*}
    % &\lambda_n=A_1w_nB_1,\ \rho_n=A_2w_nB_2 \mbox{ or}\\
      &\lambda_n=\alpha_1w_{n+1}\beta_1,\ \rho_n=\alpha_2w_{n+1}\beta_2 \mbox{ or}\\
      &\lambda_n=\alpha_1w_{n+2}\beta_1,\ \rho_n=\alpha_2w_{n+2}\beta_2,
  \end{align*}
  for some $\alpha_1,\beta_1,\alpha_2,\beta_2\in A_n^*$. Let
  $\nu(\alpha_1)$ be the numbers of factors $w_j$ ($1\le j\le n+2$)
  that intervene in $\alpha_1$. Suppose that there exist integers
  $1\leq l'_1,l'_2\leq 2^n$ such that
  $\delta_n(e_{l'_1},\lambda_n)=e_{l'_2}$. Since
  $\lambda_n=\alpha_1w_{n+1}\beta_1$ or
  $\lambda_n=\alpha_1w_{n+2}\beta_1$, we have
  $\mu_n(l_1,\alpha_1)=\sigma(2^n)$ and
  $\mu_n(l'_1,\alpha_1)=\sigma(2^n)$. It follows that
  $l_1=l'_1=\sigma(2^n-\nu(\alpha_1)\mod 2^n)$. Similarly, one may
  show that $l_2=l'_2$. We deduce that
  $\eta(\lambda_n)=\begin{psmallmatrix}
    e_{l_1}\\
    e_{l_2}
  \end{psmallmatrix}$. Also, in the same way, one shows that
  $\eta(\rho_n)=\begin{psmallmatrix}
    e_{l_3}\\
    e_{l_4}
  \end{psmallmatrix}$.
  
  Similarly, if $v_{n}=CzD$ with $z\in\{w_n,w_{n+1},w_{n+2}\}$, then
  there exist integers $1\leq l''_1,l''_2\leq 2^n$ such that
  $\eta(v_n)=\begin{psmallmatrix}
    e_{l''_1}\\
    e_{l''_2}
  \end{psmallmatrix}$. Then, by (\ref{eq:lamdan-rhon-rank-1}), we have
  $\eta(\lambda_{n+1})=\begin{psmallmatrix}
    e_{l_1}\\
    e_{l_2}
  \end{psmallmatrix}\begin{psmallmatrix}
    e_{l''_1}\\
    e_{l''_2}
  \end{psmallmatrix}\begin{psmallmatrix}
    e_{l_3}\\
    e_{l_4}
  \end{psmallmatrix}$ and $\eta(\rho_{n+1})=\begin{psmallmatrix}
    e_{l_3}\\
    e_{l_4}
  \end{psmallmatrix}\begin{psmallmatrix}
    e_{l''_1}\\
    e_{l''_2}
  \end{psmallmatrix}
  \begin{psmallmatrix}
    e_{l_1}\\
    e_{l_2}
  \end{psmallmatrix}$. Now, as
  $\eta(\lambda_{n+1}),\eta(\rho_{n+1})\neq 0$, we have
  $l_2=l_4=l''_1$ and $l_1=l_3=l''_2$. It follows that
  $\eta(\lambda_{n+1})=\eta(\rho_{n+1})=\begin{psmallmatrix}
    e_{l_1}\\
    e_{l_2}
  \end{psmallmatrix}$. Also, if a letter of $B_n\setminus A_n$
  intervenes in $v_{n}$ or $v_n=1$, then
  $\eta(\lambda_{n+1})=\eta(\rho_{n+1})$. Thus, no letter of
  $B_n\setminus A_n$ intervenes in $v_{n}$, $v_n\neq 1$, and
  \begin{equation}
    \text{the letters $w_n,w_{n+1},w_{n+2}$ do not intervene in $v_n$.}
    \label{eq:vn-without-side-transitions}
  \end{equation}
  Suppose that the number of occurrences of letters from the set
  $\{w_1,\ldots,w_{n+2}\}$ in the factorization of $v_{n}$
  in~(\ref{eq:factorization}) is at least $2^{n-1}$. Then, there
  exists a factorization $v_n=v'_nv''_n$ such that the number of
  occurrences of letters from the set $\{w_1,\ldots,w_{n+2}\}$ in the
  factorization of $v'_{n}$ is at least $2^{n-1}$ and
  strictly less than $2^{n}-1$. Thus, there exist integers
  $q_1\in\{1,\ldots,2^{n-1}\}$ and $q_2\in \{2^{n-1}+1,\ldots,2^n\}$
  such that $\mu_n(\sigma(q_1),v'_n)=\sigma(q_2)$ or
  $\mu_n(\sigma(q_2),v'_n)=\sigma(q_1)$. Now, since
  \begin{align*}
    \kappa_n(w_{n}) &=\begin{pmatrix}
      \sigma(2^{n-1})\\
      \sigma(2^{n-1}+1)
    \end{pmatrix}\
    \text{and}\ 
      \kappa_n(w_{n+1})=\kappa_n(w_{n+2})=
      \begin{pmatrix}
        \sigma(2^{n})\\
        \sigma(1)
      \end{pmatrix},
    \end{align*} 
    at least one of the letters $w_n,w_{n+1},w_{n+2}$ intervenes in
    $v'_n$, which contradicts \eqref{eq:vn-without-side-transitions}.
    Hence, the number of occurrences of letters from the set
    $\{w_1,\ldots,w_{n+2}\}$ in the factorization of $v_{n}$
    in~(\ref{eq:factorization}) is strictly less than $2^{n-1}-1$.
  
    Let $E=\{\sigma(1),\ldots,\sigma(2^{n-1})\}$ and
    $F=\{\sigma(2^{n-1}+1),\ldots,\sigma(2^n)\}$. As
    $\lambda_{n+1}=\lambda_nv_n\rho_n$ and
    $\rho_{n+1}=\rho_nv_n\lambda_n$, from
    \eqref{eq:lamdan-rhon-rank-1} we get that
    $\delta_n(e_{l_2},v_n)=e_{l_3}$ and
    $\delta_n(e_{l_4},v_n)=e_{l_1}$, and, thus, we have
  \begin{equation}
    \text{$\mu_n(l_2,v_n)=l_3$ and $\mu_n(l_4,v_n)=l_1$.}
    \label{eq:vn}
  \end{equation}
  By~\eqref{eq:vn-without-side-transitions}, from the structure of the
  automaton $\mathcal{A}''_n$ we deduce that $l_2,l_3\in G_1$ and
  $l_4,l_1\in G_2$ for some sets $G_1,G_2\in\{E,F\}$. Now, as
  $\eta(\lambda_{n+1})=\begin{psmallmatrix}
    e_{l_1}\\
    e_{l_4}
  \end{psmallmatrix}$ and $\eta(\rho_{n+1})=\begin{psmallmatrix}
    e_{l_3}\\
    e_{l_2}
  \end{psmallmatrix}$, with the same argument as above, we have
  $l_4,l_3\in H_1$ and $l_2,l_1\in H_2$ for some sets
  $H_1,H_2\in\{E,F\}$. This shows that $G_1=G_2$. Now, as $G_1=G_2$
  and because of~\eqref{eq:vn-without-side-transitions}, we obtain
  \begin{equation}
    \sigma^{-1}(l_2),\sigma^{-1}(l_4)<\sigma^{-1}(l_3),\sigma^{-1}(l_1).
    \label{eq:order}
  \end{equation}
  Since
  \begin{equation}
    \sigma^{-1}(l_1)-\sigma^{-1}(l_4)=\sigma^{-1}(l_3)-\sigma^{-1}(l_2)
    \label{eq:equal-steps}
  \end{equation}
  and $\eta(\lambda_n)\neq \eta(\rho_n)$, we have
  \begin{equation}
    l_3\neq l_1
    \label{eq:l3l1}
  \end{equation}   
  because of~(\ref{eq:lamdan-rhon-rank-1}). By symmetry, we may assume
  that $\sigma^{-1}(l_3)<\sigma^{-1}(l_1)$, so that
  \begin{displaymath}
    \sigma^{-1}(l_2)<\sigma^{-1}(l_4)<\sigma^{-1}(l_3)<\sigma^{-1}(l_1)
  \end{displaymath}
  by~(\ref{eq:equal-steps}). 
  
  Hence, by~(\ref{eq:vn}), since there is a
  unique element of~$A_n^+$ of which none of $w_n,w_{n+1},w_{n+2}$ is
  a factor that in $\mathcal{A}''_n$ maps $l_4$ to~$l_3$, we deduce
  that there are factorizations $v_n=AB=B'C$ such that
  $\mu_n(l_2,A)=l_4$, $\mu_n(l_4,B)=\mu_n(l_4,B')=l_3$,
  $\mu_n(l_3,C)=l_1$ and there are factorizations
  \begin{equation}
    B=\tau_1\omega_1\tau_2\cdots\tau_{n_B}\omega_{n_B}\tau_{n_B+1}\
    \text{and}\
    B'=\tau'_1\omega_1\tau'_2\cdots\tau'_{n_B}\omega_{n_B}\tau'_{n_B+1}
    \label{eq:B,B'}
  \end{equation}
  satisfying the following conditions:
  \begin{enumerate}
  \item $\tau_i,\tau'_i\in \langle x,y\rangle^1$, for all $1\leq i\leq
    n_B+1$;
  \item $\omega_i\in \{w_1,\ldots,w_{n+2}\}$, for all $1\leq i\leq n_B$.
  \end{enumerate}
  
  There exists an integer $1\leq n'$ such that $2^{n'}\leq
  \sigma^{-1}(l_1) - \sigma^{-1}(l_2) < 2^{n'+1}$. Consider the set
  \begin{displaymath}
    \St=\{\sigma(\sigma^{-1}(l_2)),\sigma(\sigma^{-1}(l_2)+1),\ldots,
    \sigma(\sigma^{-1}(l_1)-1)\}.
  \end{displaymath}
  We claim that
  \begin{equation}
    \begin{split}
      \parbox[t]{.8\textwidth}{there exists an integer $n'\leq n''$
        such that the letter $w_{n''}$ only acts in~$\mathcal{A}''_n$
        on one of the states of the set $\St$.}
    \end{split}
    \label{eq:claim}
  \end{equation}
  By Lemma~\ref{A''}, we have
  \begin{align*}
    \kappa_n(w_{n'})=
    &\left(
      \begin{matrix}
        \sigma(2^{n'-1})  &\sigma(2^{n'-1}+2^{n'})  &\ldots\\
        \sigma(2^{n'-1}+1)&\sigma(2^{n'-1}+2^{n'}+1)&      
      \end{matrix}
      \right.\\
    &\qquad\qquad\qquad\qquad
      \left.
      \begin{matrix}
        \sigma(2^{n'-1}+(2^{n-n'}-1)\times 2^{n'})\\
        \sigma(2^{n'-1}+(2^{n-n'}-1)\times 2^{n'}+1)
      \end{matrix}\right)
  \end{align*}
  Then, $w_{n'}$ acts on one or two of the states in the set $\St$. If
  $w_{n'}$ only acts on one of the states of~$\St$, then the claim
  \eqref{eq:claim} holds. Now, suppose that $w_{n'}$ acts on two of
  the states of~$\St$. Then, there exist nonnegative integers $k_1$
  and $k_2$ %$0\leq k_1$ and $0\leq
  such that $k_2\leq 2^{n-n'}-1$,
  $\sigma^{-1}(l_2)+k_1=2^{n'-1}+k_2\times 2^{n'}$, and
  $\sigma(\sigma^{-1}(l_2)+k_1),\sigma(\sigma^{-1}(l_2)+k_1+2^{n'})\in
  \St$. We deduce that $\sigma(2^{n'-1}+k_2\times 2^{n'}+2^{n'-1})\in
  \St$. We have $2^{n'-1}+k_2\times 2^{n'}+2^{n'-1}=(k_2+1)\times
  2^{n'}$. There exist integers $n''$ and $k$ such that $n'< n''$,
  $0\leq k\leq 2^{n-n''}-1$, and $2^{n''-1}+k\times
  2^{n''}=(k_2+1)\times 2^{n'}$. Now, as
  \begin{align*}
    \kappa_n(w_{n''})=
    &\left(
      \begin{matrix}
        \sigma(2^{n''-1})  &\sigma(2^{n''-1}+2^{n''})  &\ldots\\
        \sigma(2^{n''-1}+1)&\sigma(2^{n''-1}+2^{n''}+1)&      
      \end{matrix}
      \right.\\
    &\qquad\qquad\qquad\qquad
      \left.
      \begin{matrix}
        \sigma(2^{n''-1}+(2^{n-n''}-1)\times 2^{n''})\\
        \sigma(2^{n''-1}+(2^{n-n''}-1)\times 2^{n''}+1)
      \end{matrix}\right),
  \end{align*}
  $w_{n''}$ acts on the state $\sigma(2^{n'-1}+k_2\times
  2^{n'}+2^{n'-1})$ which is in $\St$. Since $2^{n'}\leq
  \sigma^{-1}(l_1) - \sigma^{-1}(l_2) < 2^{n'+1}$ and $n'< n''$,
  $w_{n''}$ may not act on the states of $\St$ twice. Then, the claim
  \eqref{eq:claim} holds.
    
  Now, using \eqref{eq:claim}, we prove that $l_2=l_4$ and
  $l_1=l_3$, which is in contradiction with (\ref{eq:l3l1}).
  As $v_n=AB=B'C$ and by (\ref{eq:vn}),
  (\ref{eq:B,B'}) and \eqref{eq:claim}, the letter $w_{n''}$
  intervenes once in $B$ and $B'$ and intervenes in neither $A$
  nor~$C$. Now, as the letter $w_{n''}$ intervenes once in each of the
  words $B$ and $B'$, by (\ref{eq:B,B'}) and \eqref{eq:claim}, there
  exists an integer $l$ such that $1\leq l\leq n_B$ and $\omega_{l}=w_{n''}$.
  If $A\not\in \langle x,y\rangle^1$, as $\mu_n(l_2,A)=l_4$, then
  there is a factorization
  \begin{equation}
    A=\tau''_1\omega'_1\tau''_2\cdots\tau''_{n_A}\omega'_{n_A}\tau''_{n_A+1}
<    \label{eq:A}
  \end{equation}
  satisfying the following conditions:
  \begin{enumerate}
  \item $0<n_A$;
  \item $\tau''_i\in \langle x,y\rangle^1$, for all $1\leq i\leq
    n_A+1$;
  \item $\omega'_i\in \{w_1,\ldots,w_{n+2}\}$, for all $1\leq i\leq n_A$.
  \end{enumerate} 
Since $v_n=AB=B'C$ and $\mu_n(l_2,v_n)=l_3$, the letter $w_{n''}$ acts on the states $\sigma(\sigma^{-1}(l_2)+n_A+l)$ and $\sigma(\sigma^{-1}(l_2)+l)$. A contradiction with \eqref{eq:claim}.
  It follows that $A\in \langle x,y\rangle^1$. Similarly, we have $C\in \langle x,y\rangle^1$.
  Now, as $\mu_n(l_2,A)=l_4$, $\mu_n(l_3,C)=l_1$ and
  $\mu_n(i,z)=i$, for every integer $1\leq i\leq 2^n$ and an element
  $z\in\{x,y\}$, we have $l_2=l_4$ and $l_1=l_3$.
  
  The above shows that Case \ref{I2} does not hold.
\end{proof}

\section{The pseudovariety \pv{TM}}
\label{sec:pvs-pvtm}

It is known that a finite group $G$ is in $\mathsf{TM}$ if and only if $G$ is an extension of a nilpotent group by a 2-group (see \cite{Bo-Ma, Sir}) and a finite semigroup $S$ is in $\mathsf{TM}$ if and only if the principal factors of $S$ are either null semigroups or inverse semigroups over groups that are extensions of nilpotent groups by 2-groups \cite[Corollary 10]{Jes-Ril}.

For the pseudovarieties $\mathsf{TM}$, $\mathsf{MN}^{(2)}$ and  $\mathsf{MN}^{(3)}$, we have the following result.

%\begin{thm} \label{PS TM}
%The following equality holds
%$$\mathsf{TM}=\llbracket \phi^{\omega}(x)=\phi^{\omega}(y) \rrbracket$$
%where $\phi$ is the continuous endomorphism of the free profinite semigroup on $\{x,y\}$ such that $\phi(x)=xy$ and $\phi(y)=yx$.
%\end{thm}

\begin{thm} \label{PS TM EUNNG} The following statements hold:
\begin{enumerate}
\item $\mathsf{TM}=\llbracket \phi^{\omega}(x)=\phi^{\omega}(y) \rrbracket$ where $\phi$ is the continuous endomorphism of the free profinite semigroup on $\{x,y\}$ such that $\phi(x)=xy$ and $\phi(y)=yx$.
\item $\mathsf{MN}^{(i)}=\llbracket \psi(\phi^{\omega}(x))=\psi(\phi^{\omega}(y)) \rrbracket$ where $\phi$ is the continuous endomorphism of the free profinite semigroup on $\{x,y,z,t\}$ such that $ \phi(x)=xzytyzx,\phi(y)=yzxtxzy,\phi(z)=z,\phi(t)=t$ and
$\psi$ runs over all homomorphisms from $\{x, y, z, t\}^+$ to $\{a_1,\ldots,a_i\}^+$ ($i=2,3$).
\end{enumerate}
\end{thm}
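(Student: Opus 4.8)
The plan is to handle the two equalities separately, in each case reducing the combinatorial definition to the behaviour of an iterated continuous endomorphism of a free profinite semigroup.

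For~(1), the first step is to observe that specializing all the parameters $z_i$ to $1$ collapses the recursion to $\lambda_{n+1}=\lambda_n\rho_n$ and $\rho_{n+1}=\rho_n\lambda_n$; hence, with $\phi$ the endomorphism $\phi(x)=xy$, $\phi(y)=yx$, a straightforward induction identifies $\lambda_n(x,y,1,\dots,1)$ with $\phi^n(x)$ and $\rho_n(x,y,1,\dots,1)$ with $\phi^n(y)$, first as words and hence as elements of $\overline\Omega_{\{x,y\}}\mathsf S$. Consequently a finite semigroup $S$ is Thue--Morse exactly when $S\models\phi^n(x)=\phi^n(y)$ for some $n\ge 1$, and it remains to prove that this is equivalent to $S\models\phi^\omega(x)=\phi^\omega(y)$. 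For one direction, since $\phi$ is a continuous endomorphism, composing with $\phi$ an arbitrary continuous homomorphism from $\overline\Omega_{\{x,y\}}\mathsf S$ to $S$ shows that $S\models\phi^n(x)=\phi^n(y)$ forces $S\models\phi^m(x)=\phi^m(y)$ for all $m\ge n$; restricting $m$ to the values $k!$ and using that $\phi^\omega$ denotes $\lim_k\phi^{k!}$ gives $S\models\phi^\omega(x)=\phi^\omega(y)$. For the other direction, as $S$ is finite there are only finitely many maps $\psi\colon\{x,y\}\to S$, and for each the sequences $\widehat\psi(\phi^{k!}(x))$ and $\widehat\psi(\phi^{k!}(y))$ are eventually equal to the common value $\widehat\psi(\phi^\omega(x))=\widehat\psi(\phi^\omega(y))$; choosing $k$ large enough uniformly in $\psi$ yields $S\models\phi^{k!}(x)=\phi^{k!}(y)$, so $S\in\mathsf{TM}$.

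For~(2), the strategy is to deduce the statement from Theorem~\ref{PS MN}. By definition $S\in\mathsf{EUNNG}$ iff the upper non-nilpotent graph of $S$ has no edges, that is, iff $\langle a,b\rangle$ is nilpotent — equivalently $\langle a,b\rangle\in\mathsf{MN}$ — for all $a,b\in S$, which by Theorem~\ref{PS MN} means $\langle a,b\rangle\models\phi^\omega(x)=\phi^\omega(y)$ with $\phi$ the endomorphism of $\overline\Omega_{\{x,y,z,t\}}\mathsf S$ of that theorem. The one remaining point is the change of alphabet: a two-generated subsemigroup $\langle a,b\rangle$ of $S$ satisfies $\phi^\omega(x)=\phi^\omega(y)$ if and only if, for every substitution $\sigma$ sending $x,y,z,t$ to nonempty words over a two-letter alphabet $\{a,b\}$, the semigroup $S$ satisfies $\widehat\sigma(\phi^\omega(x))=\widehat\sigma(\phi^\omega(y))$. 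Indeed, every continuous homomorphism $\overline\Omega_{\{x,y,z,t\}}\mathsf S\to\langle a,b\rangle$ factors as $\widehat\psi\circ\widehat\sigma$ for a suitable word substitution $\sigma$ and a suitable $\psi\colon\{a,b\}\to S$, and conversely for any such $\sigma$ and $\psi$ the homomorphism $\widehat\psi\circ\widehat\sigma$ has image inside a two-generated subsemigroup of $S$. Assembling these equivalences yields $\mathsf{EUNNG}=\llbracket\phi^\omega(x)=\phi^\omega(y)\rrbracket$ ranging over all $\{x,y,z,t\}\subseteq\{a,b\}^+$.

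Neither argument is deep; the only steps demanding care are, in~(1), verifying that the parametric words $\lambda_n,\rho_n$ evaluated at $1$ coincide with the iterates $\phi^n(x),\phi^n(y)$ of the Thue--Morse endomorphism, and, in~(2), checking the bookkeeping that every homomorphism from $\overline\Omega_{\{x,y,z,t\}}\mathsf S$ into a two-generated subsemigroup of $S$ arises as $\widehat\psi\circ\widehat\sigma$ for a word substitution $\sigma$. Once these are in place, both equalities follow from the stabilization behaviour of idempotent powers of continuous endomorphisms (for~(1)) and from Theorem~\ref{PS MN} (for~(2)).
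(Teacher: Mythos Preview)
Your proposal is correct and follows the approach the paper itself takes, though the paper gives no explicit proof: it cites \cite[Corollary~6.4]{Alm3} for part~(1), and part~(2) is left as an immediate consequence of Theorem~\ref{PS MN} and the definition of $\mathsf{EUNNG}$, which is precisely the reduction you carry out. Your write-up is in fact more detailed than what the paper provides, and the bookkeeping you flag (the identification $\lambda_n(x,y,1,\dots,1)=\phi^n(x)$ in~(1), and the factorization of homomorphisms into two-generated subsemigroups through word substitutions in~(2)) is exactly the content that needs checking.
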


%Theorem~\ref{PS TM} follows easily from the definition of the pseudovariety $\mathsf{TM}$ and
%it has already been observed by the first author \cite[Corollary 6.4]{Alm3}. 

Theorem~\ref{PS TM EUNNG}(1) and Theorem~\ref{PS TM EUNNG}(2) follow easily from the definition of the pseudovarieties $\mathsf{TM}$ and $\mathsf{MN}^{(i)}$ ($i=2,3$), respectively.
Theorem~\ref{PS TM EUNNG}(1) has already been observed by the first author \cite[Corollary 6.4]{Alm3}. 

In \cite{Hig-Mar}, the semigroup $N_1$ is defined as the
$\theta$-disjoint union of the Brandt semigroup $\mathcal{B}_4(\{1\})$
with the null semigroup $\{w,v,\theta\}$,
\begin{eqnarray} \label{ex1}
  N_1 = \mathcal{B}_4(\{1\}) \cup^{\Gamma_{N_1}} \{w,v,\theta\},
\end{eqnarray}
where $\Gamma_{N_1}(w)=(2,3,\theta)(1,4,\theta)$ and
$\Gamma_{N_1}(v)=(1,3,\theta)(2,4,\theta)$. Note that $\langle
s_1,s_2,s_3\rangle \in\mathsf{MN}$, for all $s_1,s_2, s_3\in N_1$, and
$N_1\not\in\mathsf{MN}$.

Note also that there exists a finite semigroup $S$ such that for all
$s_1, s_2\in S$, $\langle s_1,s_2\rangle \in\mathsf{\M}$ but
$S\not\in\mathsf{\M}$. This holds, for example, for the semigroup
$F_{12}$.

\begin{cor}\label{PS MN Rank}
  The ranks of the pseudovarieties $\mathsf{MN}$, $\mathsf{PE}$,
  $\mathsf{\M}$ and $\mathsf{TM}$ are respectively $4$, $2$, $3$ and
  $2$.
\end{cor}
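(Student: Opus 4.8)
The plan is to assemble the rank computations from the upper and lower bounds that have been (or can be) gathered throughout the paper. For each of the five pseudovarieties, an upper bound on the rank comes from a finite basis (or bounded generation of a pseudoidentity basis), and a matching lower bound comes from exhibiting a finite semigroup $S\notin\mathsf{V}$ all of whose $(r-1)$-generated subsemigroups lie in $\mathsf{V}$.

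First I would dispatch the upper bounds. For $\mathsf{MN}$, Theorem \ref{PS MN} exhibits a basis consisting of a single pseudoidentity on four letters, so $\mathsf{MN}$ has rank at most $4$. For $\mathsf{TM}$, Theorem \ref{PS TM EUNNG}(1) gives a one-pseudoidentity basis on two letters, so the rank is at most $2$; the same holds for $\mathsf{EUNNG}$ by Theorem \ref{PS TM EUNNG}(2), whose basis again uses only the two underlying letters $a,b$. For $\mathsf{PE}$, Theorem \ref{PS PE} describes it as $\mathsf{BG_{nil}}$ intersected with a single pseudoidentity on three letters; since $\mathsf{BG_{nil}}$ is itself defined by pseudoidentities on at most two letters (the block-group identity $(ef)^\omega=(fe)^\omega$ together with the nilpotency-of-subgroups identities), the rank of $\mathsf{PE}$ is at most $3$ from this presentation — but in fact one argues it is at most $2$, because membership can be tested on $2$-generated subsemigroups: by Theorem \ref{PE F_7}, $S\notin\mathsf{PE}$ iff $S\notin\mathsf{BG_{nil}}$ or $F_7\prec S$, and $F_7$ is $2$-generated while failure of $\mathsf{BG_{nil}}$ is detected in a $2$-generated subsemigroup. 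For $\mathsf{\M}$, Theorem \ref{PS NTL4} characterizes membership via a pseudo-implication in three letters $a,w_1,w_2$ together with membership in $\mathsf{BG_{nil}}$, so the rank is at most $3$.

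Next the lower bounds, which is where the concrete witnesses enter. For $\mathsf{MN}$, the semigroup $N_1$ of \eqref{ex1} satisfies $\langle s_1,s_2,s_3\rangle\in\mathsf{MN}$ for all triples but $N_1\notin\mathsf{MN}$, so the rank is at least $4$; combined with the upper bound, it is exactly $4$. For $\mathsf{PE}$ and for $\mathsf{TM}$, the semigroup $F_7$ of \eqref{ex2} is not in either pseudovariety (it contains itself, and $F_7\notin\mathsf{PE}\supseteq\mathsf{TM}$ by Theorem \ref{PE F_7}), yet every proper subsemigroup of $F_7$ is nilpotent hence lies in $\mathsf{PE}\cap\mathsf{TM}$, and since $F_7$ requires two generators this forces rank at least $2$ for both; with the upper bounds we get rank exactly $2$. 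For $\mathsf{EUNNG}$, one needs a non-$\mathsf{EUNNG}$ semigroup whose $1$-generated (monogenic) subsemigroups are all in $\mathsf{EUNNG}$ — every monogenic semigroup has empty upper non-nilpotent graph, so any finite non-$\mathsf{EUNNG}$ semigroup witnesses rank at least $2$ (e.g.\ $N_1$ or $F_7$), hence rank exactly $2$. For $\mathsf{\M}$, the remark just before this section points out that $F_{12}$ satisfies $\langle s_1,s_2\rangle\in\mathsf{\M}$ for all pairs but $F_{12}\notin\mathsf{\M}$, giving rank at least $3$; with the upper bound, the rank is exactly $3$.

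I expect the main obstacle to be the \emph{upper} bound of $2$ for $\mathsf{PE}$ (tightening the apparent bound of $3$ coming directly from the three-letter pseudoidentity of Theorem \ref{PS PE}): one must argue that $S\in\mathsf{PE}$ is genuinely decidable on $2$-generated subsemigroups, using the $\times$-primeness of $F_7$ together with the characterization $\mathsf{PE}=\{S: S\in\mathsf{BG_{nil}},\ F_7\not\prec S\}$, and checking that the $\mathsf{BG_{nil}}$-side contributes nothing above rank $2$. The remaining bookkeeping — matching each upper bound with its witness — is routine once the witnesses $N_1$, $F_7$, $F_{12}$ and their subsemigroup structure are in hand.
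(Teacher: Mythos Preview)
Your overall plan is correct and matches the paper's approach for $\mathsf{MN}$, $\mathsf{\M}$, $\mathsf{TM}$, and $\mathsf{EUNNG}$: upper bounds from the cited bases, lower bounds from the witnesses $N_1$ and $F_{12}$ (and the trivial fact that every monogenic semigroup lies in all these classes).

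The one place where you diverge from the paper is the upper bound for $\mathsf{PE}$. You read Theorem~\ref{PS PE} as providing a pseudoidentity on three letters and then work around this via Theorem~\ref{PE F_7}, using that $F_7$ is $2$-generated so that $F_7\prec S$ forces $F_7\prec\langle s_1,s_2\rangle$ for suitable $s_1,s_2$. That argument is fine (and the $\times$-primeness of $F_7$ is not needed for it, contrary to your last paragraph), but it is not what the paper does. The paper simply observes that Theorem~\ref{PS PE} already gives a basis in \emph{two} variables: the endomorphism $\phi$ there sends $x\mapsto xzzx$, $y\mapsto zxxz$, $z\mapsto z$, so $\phi(\lambda_{\mathsf{PE}})$ and $\phi(\rho_{\mathsf{PE}})$ are elements of $\overline{\Omega}_{\{x,z\}}\mathsf{S}$. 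Combined with the two-variable pseudoidentities defining $\mathsf{BG_{nil}}$, this yields rank at most $2$ immediately, without passing through the divisor characterization. Your detour works, but you have overlooked the point of the substitution $\phi$ in Theorem~\ref{PS PE}.
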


\begin{proof}
  By Theorems~\ref{PS PE} and \ref{PS TM EUNNG}(1), the ranks of the
  pseudovarieties $\mathsf{PE}$ and $\mathsf{TM}$ are at most~$2$.
  Since the semigroup $F_7$ is not in the
  pseudovariety $\mathsf{PE}$ and all subsemigroups of $F_7$ generated
  by an element of $F_7$ are in the pseudovariety $\mathsf{PE}$, the
  rank of the pseudovariety $\mathsf{PE}$ is $2$. 
  Let $S=\{a,b\}$ be a nontrivial right zero semigroup. It is easy to verify that $S\not\in \mathsf{TM}$. Since the subsemigroups $\{a\}$ and $\{b\}$ are in $\mathsf{TM}$,
  the rank of the pseudovariety $\mathsf{TM}$ is~$2$.

  By Theorem~\ref{PS MN} and Lemma~\ref{\M}, the ranks of the
  pseudovarieties $\mathsf{MN}$ and $\mathsf{\M}$ are bounded,
  respectively, by $4$ and $3$. The properties of the semigroups $N_1$
  and $F_{12}$ stated above imply that the ranks of the
  pseudovarieties $\mathsf{MN}$ and $\mathsf{\M}$ are respectively,
  $4$ and $3$.
\end{proof}

\section{Comparison of the above pseudovarieties}
\label{sec:comp-above-pseud}

In~\cite{Jes-Sha}, the upper non-nilpotent graph ${\mathcal N}_{S}$ of
a finite semigroup $S$ is investigated. The vertices of ${\mathcal
  N}_{S}$ are the elements of $S$ and there is an edge between $x$ and
$y$ if the subsemigroup generated by $x$ and $y$ is not nilpotent.
Thus, a finite semigroup $S$ belongs to $\mathsf{MN}^{(2)}$ if and
only if the graph ${\mathcal N}_S$ has no edges.
In \cite{Sha}, the collection of all such finite semigroups is denoted
$\mathsf{EUNNG}$.
By \cite[Theorem 2.6]{Jes-Sha}, we have $\mathsf{MN}^{(2)} \subseteq
\mathsf{PE}$. 
% In fact, $\mathsf{MN}^{(2)} \neq \mathsf{PE}$ and
We proceed to give an example of a semigroup in $\mathsf{PE}\setminus
\mathsf{MN}^{(2)}$.

Let $N_2$ be the transition semigroup of the following automaton:
% \begin{equation}\label{ex3}
%   N_2=\Bigl{\langle}
%   (8,2,6,1,5,\theta)(3,7,\theta)(9,4,\theta),
%   (1,8,\theta)(7,2,\theta)(5,4,10,3,9,\theta)
%   \Bigr{\rangle}
% \end{equation}
% generated by two transformations of the set $\{1,\ldots,10\}$
% described in orbit notation.
  \begin{center}
    \begin{tikzpicture}[x=4mm,y=4mm,thick,->,>=stealth',
      shorten >=1pt]
      \node [state] (1) at (0,10) {1}; %
      \node [state] (5) at (5,10) {5}; %
      \node [state] (4) at (10,10) {4}; %
      % \node [state] (10) at (15,10) {10}; %
      \node [state] (10) at (15,5) {10}; %
      % 
      % \node [state] (8) at (5,5) {8}; %
      % \node [state] (2) at (5,0) {2}; %
      % \node [state] (7) at (10,0) {7}; %
      % \node [state] (3) at (15,0) {3}; %
      % %
      % \node [state] (6) at (0,0) {6}; %
      % \node [state] (9) at (10,5) {9}; %
      \node [state] (8) at (0,5) {8}; %
      \node [state] (2) at (0,0) {2}; %
      \node [state] (7) at (5,0) {7}; %
      \node [state] (3) at (10,0) {3}; %
      \node [state] (6) at (-5,5) {6}; %
      \node [state] (9) at (10,5) {9}; %
      
      \path
      (8) edge[right] node {$\alpha$} (2)
      (2) edge[below] node {$\alpha$} (6)
      (6) edge[below] node {$\alpha$} (1)
      (1) edge[below] node {$\alpha$} (5)
      (3) edge[below] node {$\alpha$} (7)
      (9) edge[right] node {$\alpha$} (4)
      (1) edge[right] node {$\beta$} (8)
      (7) edge[below] node {$\beta$} (2)
      (5) edge[below] node {$\beta$} (4)
      (4) edge[below] node {$\beta$} (10)
      (10) edge[right] node {$\beta$} (3)
      (3) edge[right] node {$\beta$} (9);
    \end{tikzpicture}
  \end{center}
% where $(8,2,6,1,5,\theta)(3,7,\theta)(9,4,\theta)$ and
% $(1,8,\theta)(7,2,\theta)(5,4,10,3,9,\theta)$ are transformations.
  It amounts to a routine but lengthy calculation (see the appendix)
  to verify that the semigroup $N_2$ is a nilpotent extension of its
  unique 0-minimal ideal $M=\mathcal{B}_{10}(\{1\})$.
%\begin{align*}
%N=\{&(1)(7,5,\theta), (1,2,\theta)(3,4,\theta)(10,7,6,\theta),(1,2,\theta)(9,10,\theta),(1,3,\theta)(9),\\
%&(1,4,\theta)(3,2,\theta)(6,8,\theta)(9,10,\theta),(1,7,\theta)(9,4,\theta),\\
%&(1,8,\theta)(7,2,\theta)(5,4,10,3,9,\theta),(1,8,\theta)(7,4,\theta),(1,10,\theta)(9,3,\theta),\\
%&(2,1,\theta)(8,6,5,\theta),(2,4,\theta)(8),(2,5,\theta)(8,1,\theta),(2,8,\theta)(6,4,\theta),\\
%&(3,1,\theta)(6),(3,4,\theta)(6,8,\theta),(3,5,\theta)(6,1,\theta),(3,6,2,\theta),\\
%&(3,10,\theta) (10,2,\theta),(4,2,\theta)(10),(4,3,\theta)(5,10,9,\theta),(4)(5,7,\theta),\\
%&(4,9,\theta)(5,3,\theta),(4,10,\theta)(5,2,\theta),(7,1,6,\theta),\\
%&(8,2,6,1,5,\theta)(3,7,\theta)(9,4,\theta),(10,4,7,\theta)\}. 
%\end{align*}
Hence, every principal factor of $N_2$ except $M$ is null. Also, since
the rank of the nonzero elements of $M$ is one, $N_2$ has no element
with a nontrivial cycle. It follows that
\begin{equation} 
  \begin{split}
    \parbox[t]{.8\textwidth}{$N_2$ does not have any element
      $\gamma$ such that $(a_1)(a_2)\subseteq\gamma$, for some
      distinct integers $a_1,a_2\in \{1,\ldots,10\}$.}
  \end{split}
  \label{eq:cycle}
\end{equation}
Suppose that $a,b\in N_2$ and $c\in N_2^1$. Since every principal
factor of $N_2$ is either a null semigroup or an inverse semigroup
over a trivial group, the semigroup $N_2$ is in $\mathsf{TM}$ by
\cite[Theorem 9]{Jes-Ril}. Thus, if $c=1$ then there exists an integer
$n$ such that
$\lambda_{n+2}(a,b,1,1,c,\ldots,c^n)=\rho_{n+2}(a,b,1,1,c,\ldots,c^n)$.
Now, suppose that $c\neq 1$. By (\ref{eq:cycle}), there exists an
integer $n_c$ such that
$c^{n_c}\in\{\overline{\theta},(1),\ldots,(10)\}$. If
$c^{n_c}=\overline{\theta}$, then clearly we have
$\lambda_{n_c+2}(a,b,1,1,c,\ldots,c^{n_c})=\rho_{n_c+2}(a,b,1,1,c,\ldots,c^{n_c})=\overline{\theta}$.
Now, suppose that $c^{n_c}=(v)$ and $[v', v, v''] \sqsubseteq c$ for
some integers $v,v',v''\in \{1,\ldots,10\}$. As in $N_2$ no element
has a nontrivial cycle, we get $v=v'=v''$. Then, we obtain
$(v)\subseteq c$ and, thus, the equality $c^{n'}=(v)$ holds for every
integer $n'\geq n_c$. Since $c^{n_c}$ is an idempotent and the Rees
quotient $N_2/M$ is nilpotent, we must have $c^{n_c}\in M$. It follows that
$\lambda_{n_c+2}(a,b,1,1,c,\ldots,c^{n_c}),
\rho_{n_c+2}(a,b,1,1,c,\ldots,c^{n_c})\in M$. If
\begin{equation}
  \label{eq:N2-PE}
  \lambda_{n_c+3}(a,b,1,1,c,\ldots,c^{n_c},c^{n_c+1})
  \neq \overline{\theta}\neq
  \rho_{n_c+3}(a,b,1,1,c,\ldots,c^{n_c},c^{n_c+1}),
\end{equation}
then
$\lambda_{n_c+2}(a,b,1,1,c,\ldots,c^{n_c}),\rho_{n_c+2}(a,b,1,1,c,\ldots,c^{n_c})\neq\overline{\theta}$
and, thus, there exist integers $i_1, i_2, j_1, j_2\in
\{1,\ldots,10\}$ such that
% the following conditions hold:
% \begin{enumerate}
% \item
  $\lambda_{n_c+2}=
  \begin{psmallmatrix}
    i_1\\
    i_2
  \end{psmallmatrix}
$ and
  %(i_1,i_2,\theta)$ or $i_1=i_2$ and $\lambda_{n_c+2}=(i_1)$;
  % \item
$\rho_{n_c+2}=
  \begin{psmallmatrix}
    j_1\\
    j_2
  \end{psmallmatrix}
$.
  %(j_1,j_2,\theta)$ or $j_1=j_2$ and $\rho_{n_c+2}=(j_1)$.
%\end{enumerate}
Now, as $c^{n_c+1}=(v)$, we have $i_1=i_2=j_1=j_2=v$. It follows that
$$\lambda_{n_c+2}(a,b,1,1,c,\ldots,c^{n_c})=\rho_{n_c+2}(a,b,1,1,c,\ldots,c^{n_c})=(v).$$
On the other hand, if~\eqref{eq:N2-PE} fails, then we
conclude that
\begin{displaymath}
  \lambda_{n_c+4}(a,b,1,1,c,\ldots,c^{n_c+2})
  =\rho_{n_c+4}(a,b,1,1,c,\ldots,c^{n_c+2})
  =\overline{\theta}.
\end{displaymath}
Therefore, the semigroup $N_2$ is in $\mathsf{PE}$.

Now, let
$x=
\begin{psmallmatrix}
  4 \\
  1
\end{psmallmatrix}
$,
$y=
\begin{psmallmatrix}
  2 \\
  3
\end{psmallmatrix}
$,
$w_1=\beta\alpha=
\begin{psmallmatrix}
  1 & 3 & 10 & 7\\
  2 & 4 & 7  & 6
\end{psmallmatrix}
$, and
$w_2=\alpha\beta=
\begin{psmallmatrix}
  1 & 3 & 6 & 9\\
  4 & 2 & 8 & 10
\end{psmallmatrix}
$.
% $x=(4,1,\theta)$, $y= (2,3,\theta)$,
%$w_1=(1,2,\theta)(3,4,\theta)(10,7,6,\theta)$,
%$w_2=(1,4,\theta)(3,2,\theta)(6,8,\theta)(9,10,\theta)$.
Since
\begin{displaymath}
  \lambda_k(x,y,w_1,w_2,w_1,w_2,\ldots)\neq \rho_k(x,y,w_1,w_2,w_1,w_2,\ldots)
\end{displaymath}
for every $1\leq k$, by \cite[Corollary 12]{Jes-Ril} the semigroup
$N_2$ is not nilpotent. Now, as $N_2$ is two-generated, we have
$N_2\in\mathsf{PE}\setminus\mathsf{MN}^{(2)}$. It amounts to a routine
but very long calculation (see the appendix) to verify that the
semigroup $N_2$ satisfies the identity
$\lambda_{4}(x,y,1,w_1,w_2,w_3)=\rho_{4}(x,y,1,w_1,w_2,w_3)$, for
$x,y\in N_2$ and $w_1,w_2,w_3\in N_2^1$. Hence, we have $N_2 \in
\mathsf{NT}$.

Figure~\ref{fig:interval-MN-BG} represents the strict inclusion relationships
between pseudovarieties we have considered so far.
\begin{figure}[ht]
\begin{center}
  \psscalebox{1.0 1.0} % Change this value to rescale the drawing.
  {
    \begin{pspicture}(0.4,-1.8785938)(7,1.8785938)
      \rput[bl](0.0,1.6){$\mathsf{BG}$}
      \rput[bl](0.0,0.5){$\mathsf{TM}$}
      \rput[bl](0.0,-0.6){$\mathsf{BG}_{nil}$}
      \rput[bl](0.0,-1.7){$\mathsf{PE}$}
      \rput[bl](3,-0.65){$\mathsf{MN}^{(3)}$}
      \rput[bl](3,0.5){$\mathsf{MN}^{(2)}$}
      \rput[bl](6.6,-0.6){$\mathsf{NT}$}
      \rput[bl](6.5,0.5){$\mathsf{\M}$}
      \rput[bl](4.9,-1.8){$\mathsf{MN}$}
      \rput[bl](4.9,1.6){$\mathsf{PE}$} \psline[linecolor=black,
      linewidth=0.04](0.2,1.5)(0.2,0.9) \psline[linecolor=black,
      linewidth=0.04](0.2,0.4)(0.2,-0.2) \psline[linecolor=black,
      linewidth=0.04](0.2,-0.7)(0.2,-1.3) \psline[linecolor=black,
      linewidth=0.04](6.8,0.4)(6.8,-0.2) \psline[linecolor=black,
      linewidth=0.04](5.2,1.5)(6.7,0.9) \psline[linecolor=black,
      linewidth=0.04](6.7,-0.7)(5.2,-1.4) \psline[linecolor=black,
      linewidth=0.04](5.0,1.5)(3.5,0.85) \psline[linecolor=black,
      linewidth=0.04](5.0,-1.4)(3.7,-0.7) \psline[linecolor=black,
      linewidth=0.04](3.3,.4)(3.3,-0.3) \psline[linecolor=black,
      linewidth=0.04](6.4,0.6)(4.2,-0.4)
    \end{pspicture}
  }
\end{center}
  \label{fig:interval-MN-BG}
  \caption{Two intervals of a poset of pseudovarieties}
\end{figure}

We have $F_{12}\in\mathsf{MN}^{(2)}\setminus \mathsf{\M}$ and
$N_2\in\mathsf{NT}\setminus\mathsf{MN}^{(2)}$.

Also, the class $(\mathsf{MN}^{(2)}\cap\mathsf{\M})\setminus
\mathsf{NT}$ is not empty. An example of a semigroup in this class is
the subsemigroup $N_3$ of the full transformation semigroup on the set
$\{1,2,3,4\} \cup \{\theta\}$ given by the union of the semigroup
$N_1$ with the transformation $q=(2,1,\theta)(4,3,\theta)$. It is
proved in \cite{Jes-Sha} that $N_3\in\mathsf{MN}^{(2)}\setminus
\mathsf{NT}$. It is again routine (see the appendix) to verify that $N_3$ satisfies the
identity $\lambda_2(aw_2,w_2a,w_1,w_2)=\rho_2(aw_2,w_2a,w_1,w_2)$, for $a\in N_3$ and $w_1,w_2\in N_3^1$.
Now, as $N_3\in \mathsf{BG_{nil}}$, by Theorem~\ref{PS NTL4} the
semigroup $N_3$ is $\M$. 

The semigroup $F_{12}$ is not nilpotent and it is generated by 3
elements. Now, as $N_1\in \mathsf{MN}^{(3)}\setminus \mathsf{MN}$ and
$F_{12}\in \mathsf{MN}^{(2)}\setminus \mathsf{MN}^{(3)}$, we have
\begin{displaymath}
  \mathsf{MN}\subsetneqq\mathsf{MN}^{(3)}\subsetneqq\mathsf{MN}^{(2)}.
\end{displaymath}
By Lemma \ref{\M}, we have $\mathsf{MN}^{(3)}\subseteq \mathsf{\M}$.
Since $N_2 \not\in \mathsf{MN}^{(2)}$, $\mathsf{MN}^{(3)}$ is strictly
contained in $\mathsf{\M}$. Similarly, we have $N_3\in
\mathsf{MN}^{(3)}$. Hence, the pseudovarieties $\mathsf{MN}^{(3)}$ and
$\mathsf{NT}$ are incomparable.

Higgins and Margolis \cite{Hig-Mar} showed that
%$$S=\mathcal{M}^0(\{1\},4,4;I_{4})\cup \langle w,v
%\rangle$$ with $w^{2}=v^{2}=wv=vw=\theta,\langle
% w,v\rangle=\{w,v,\theta\}$, $\Gamma(w) = (4,1,\theta)(3,2,\theta)$
% and $\Gamma(v) = (4,2,\theta)(3,1,\theta)$
$N_1\in\langle\mathsf{Inv}\rangle\cap\mathsf{A}$ (in fact, they showed
that $N_1\in\langle\mathsf{Inv}\rangle\cap\mathsf{A}\setminus
\langle\mathsf{Inv}\cap\mathsf{A}\rangle$).
% The proof of
% $\langle\mathsf{A}\cap\mathsf{Inv}\rangle\subseteq\mathsf{A}\cap\mathsf{MN}$,
% is somehow similar to prove of them for showing the above example.
The semigroup $N_1$ is not in the pseudovariety
$\mathsf{MN}\cap\mathsf{A}$. We claim that the class
$\mathsf{MN}\cap\mathsf{A}\setminus \langle\mathsf{Inv}\rangle\not
\neq \emptyset$ and we conclude that the pseudovarieties
$\langle\mathsf{Inv}\rangle\cap\mathsf{A}$ and $\mathsf{MN}\cap
\mathsf{A}$ are incomparable.

Since the pseudovariety generated by finite inverse semigroups is
precisely the pseudovariety of finite semigroups whose idempotents commute \cite{Ash}, we need to show that the class $\mathsf{MN}\cap\mathsf{A}\setminus\llbracket
x^{\omega}y^{\omega}=y^{\omega}x^{\omega}\rrbracket$ is not empty. 
Let $N_4=\{e, f, ef, 0\}$ be a semigroup where $e^2 = e, f^2 = f$ and
$fe = 0\ne ef$. Since $N_4$ is $\mathcal{J}$-trivial, we have $N_4\in \mathsf{MN}\cap\mathsf{A}$. Now, as $ef \neq 0$ and $fe=0$, we have $N_4\in \mathsf{MN}\cap\mathsf{A}\setminus\llbracket x^{\omega}y^{\omega}=y^{\omega}x^{\omega}\rrbracket$. 
%The authors are indebted to the referee for this example, which is much simpler than another example for the
%same purpose that was included in an earlier version of the paper.

%Also, we can present a non-$\mathcal{J}$-trivial semigroup for this case. Let
%\begin{eqnarray} \label{ex6}
%N_4=\{a_1,\ldots,a_{12},\theta\}
%\end{eqnarray}
%be a semigroup where $\theta$ is a zero, $N=\{a_5,\ldots,a_{12}\}\cup\{\theta\}$ is a null ideal of $N_4$ and $N_4/N=\{a_1,\ldots,a_{4}\}\cup\{\theta\}\cong \mathcal{B}_2(\{1\})$ is a Brandt semigroup such that the
%following relations are satisfied (we write the elements of $N_4\setminus N$ as $(a,b)$ for $a,b\in\{1,2\}$): 
%\begin{align*}
%  &(1,1)(2,1)=a_5,\ (1,1)(2,2)=a_6,\ (1,2)(1,1)=a_7,\ (1,2)(1,2)=a_8,\\
%  &(2,1)(2,1)=a_9,\ (2,1)(2,2)=a_{10},\ (2,2)(1,1)=a_{11},\ (2,2)(1,2)=a_{12}
%\end{align*}
%and $(\alpha,\beta) (\alpha',\beta')(\alpha'',\beta'')=\theta$ if and
%only if $\beta\neq\alpha'$ and $\beta'\neq\alpha''$ for all integers
%$\alpha,\alpha',\alpha'',\beta,\beta',\beta''\in\{1,2\}$. The verification of the associativity law for this example was done by a calculation with software developed in C++.
%
%Since $(1,1)$ and $(2,2)$ are idempotents and $(1,1)(2,2)\neq(2,2)(1,1)$, we obtain $N_4\not\in \llbracket x^{\omega}y^{\omega}=y^{\omega}x^{\omega}\rrbracket$. Also, it is easy to verify that $N_4$ is nilpotent. Therefore, $N_4\in \mathsf{MN}\cap\mathsf{A}\setminus\llbracket x^{\omega}y^{\omega}=y^{\omega}x^{\omega}\rrbracket$.

\begin{thm}\label{A MN}
  % The following diagram
  Figure~\ref{fig:aperiodic}
  represents all relations of equality and strict containment between
  the pseudovarieties represented in it:
  \begin{figure}[ht]
\begin{center}
  \psscalebox{1.0 1.0} % Change this value to rescale the drawing.
  {
    \begin{pspicture}(1.5,-2.4)(5,2.6)
      \rput[bl](2.18,2.1){$\mathsf{PE}\cap\mathsf{A}%=\mathsf{TM}\cap\mathsf{A}
        =\mathsf{BG}\cap\mathsf{A}$}
      \rput[bl](0.4,0.9){$\mathsf{MN}^{(2)}\cap\mathsf{A}$}
      \rput[bl](0.4,-0.2){$\mathsf{MN}^{(3)}\cap\mathsf{A}$}
      \rput[bl](4.8,-0.2){$\mathsf{NT}\cap\mathsf{A}$}
      \rput[bl](4.7,0.9){$\mathsf{\M}\cap\mathsf{A}$}
      \rput[bl](2.9,-1.4){$\mathsf{MN}\cap\mathsf{A}$}
      \psline[linecolor=black, linewidth=0.04](5.2,0.8)(5.2,0.2)
      \psline[linecolor=black, linewidth=0.04](3.6,1.9)(5.1,1.3)
      \psline[linecolor=black, linewidth=0.04](5.1,-0.3)(3.6,-1)
      \rput[bl](2.9,-2.3){$\langle\mathsf{Inv}\cap\mathsf{A}\rangle$}
      \psline[linecolor=black, linewidth=0.04](3.5,-1.46)(3.5,-1.9)
      \psline[linecolor=black, linewidth=0.04](3.4,1.9)(1.6,1.2)
      \psline[linecolor=black, linewidth=0.04](1.6,-0.3)(3.4,-1)
      \psline[linecolor=black, linewidth=0.04](1.4,0.8)(1.4,0.2)
      \psline[linecolor=black, linewidth=0.04](4.7,1.1)(2.2,0)
    \end{pspicture}
  }
\end{center}
    \caption{A poset of aperiodic pseudovarieties}
    \label{fig:aperiodic}
  \end{figure}
\end{thm}

%$$\langle\mathsf{Inv}\cap\mathsf{A}\rangle\subset\mathsf{MN}\cap\mathsf{A}\subset\mathsf{EUNNG}\cap\mathsf{A}\subset\mathsf{PE}\cap\mathsf{A}=\mathsf{TM}\cap\mathsf{A}=\langle\mathsf{Inv}\rangle\cap\mathsf{A}$$ and 
%$$\mathsf{MN}\cap\mathsf{A}\subset\mathsf{NT}\cap\mathsf{A}\subset\mathsf{\M}\cap\mathsf{A}\subset\mathsf{PE}\cap\mathsf{A}.$$

\begin{proof}
%Suppose that $S\in\mathsf{MN}\cap\mathsf{A}$. Then $S\in \langle\mathsf{Inv}\rangle\cap\mathsf{A}$ and, thus, there exists a surjective homomorphism $\phi : T \rightarrow S$, where $T$ was a subsemigroup of a finite inverse semigroup $I$. 
%Let $T'$ be the smallest subsemigroup $I$ such that $T\subseteq T'$ ....................%and $\thicksim_{\phi}$ be a congruence on $T'$ such that  $a \thicksim_{\phi} b$ if and only if $\phi(a)=\phi(b)$ or $\phi(a^{-1})=\phi(b^{-1})$. Then $S\subseteq T'\thicksim_{\phi}$ and $T'\thicksim_{\phi}$ is an inverse semigroup.
%
%
%As $S$ is finite, it has a principal series
%$$S= S_1 \supsetneqq S_2 \supsetneqq \cdots \supsetneqq S_{s} \supsetneqq S_{s+1} = \emptyset.$$ 
%If $S$ does not have non-null principal factor, then the result follows. ..................Then, suppose that $S$ has a principal factor that is not null. Let $1\leq j \leq s$ be the biggest integer that $S_j / S_{j+1}$ is not null. By \cite[Lemma 2.1]{Jes-Okn}, $S_{j}/S_{j+1}=\mathcal{M}^{0}(\{1\}, n,n;I_{n})=M$....................................

Suppose that $T\in(\mathsf{Inv}\cap\mathsf{A}) \setminus(\mathsf{MN}\cap\mathsf{A})$. 
Since $T$ is finite, $T$ has a principal series
$$T= T_1 \supsetneqq T_2 \supsetneqq \cdots \supsetneqq T_{t} \supsetneqq T_{t+1} = \emptyset .$$ 
Each principal factor $T_i /
T_{i+1}$ $(1 \leq i\leq t)$ of $T$ is aperiodic and either completely
$0$-simple or completely simple.
%Every completely $0$-simple factor is isomorphic with a regular Rees matrix semigroup over a finite group $G$.

Since $T\not\in \mathsf{MN}$, by Lemma~\ref{finite-nilpotentW1W2} there exist a positive integer $h$, distinct elements $t_1,\, t_2 \in T$ and elements $w_1, w_2\in T$ such that
\begin{displaymath}
  t_1 =\lambda_{h}(t_1, t_2, w_1, w_2,w_1, w_2,\ldots)
  \mbox{ and }
  t_2 =\rho_{h}(t_1, t_2, w_1, w_2,w_1, w_2,\ldots).
\end{displaymath}
Suppose that $t_1 \in T_i \setminus T_{i+1}$, for some $1\leq i\leq
t$. Because $T_i$ and $T_{i+1}$ are ideals of~$T$, the above
equalities imply that $t_2 \in T_i \setminus T_{i+1}$ and $w_{1},
w_{2} \in T \setminus T_{i+1}$. Furthermore, one
obtains that $T_i / T_{i+1}$ is an aperiodic completely $0$-simple
semigroup.

If $T_i \setminus T_{i+1}$ is a trivial group, then we have a
contradiction with the assumption that $t_1$ and $t_2$ are distinct.

% Hence, $n+m>2$. If $T_{i}/T_{i+1}$ is not nilpotent, then by \cite[Lemma 2.1]{Jes-Okn} a column or row of $P$ has two nonzero elements. Without loss of generality, we may suppose this is either the first column or the first row.
% If $p_{i1}$ and $p_{j1}$ are nonzero with $i \neq j$, then it is easy to verify that the subsemigroup $\langle (1;1,i),\, (1;1,j)\rangle$ is a left zero semigroup with two elements. If, on the other hand, the first row of $P$ contains two nonzero elements, then $T_{i}\setminus T_{i+1}$ has a subsemigroup which is a right zero semigroup with two elements.
% This contradicts the assumption that $T$ is inverse.

% Hence, $T_{i}/T_{i+1}$ is a nilpotent semigroup. Again by \cite[Lemma 2.1]{Jes-Okn}, in this case, $T_{i}/T_{i+1}=\mathcal{M}^{0}(\{1\}, n,n;I_{n})$. 

Now, as $T_i/T_{i+1}$ is an aperiodic completely simple inverse
semigroup, it is easy to show that it is of the form
$T_{i}/T_{i+1}=\mathcal{B}_n(\{1\})$ for some $n$.

Since $t_{1},t_{2}\in \mathcal{B}_n(\{1\})$, 
there exist $1 \leq n_1,n_2,n_3,n_4 \leq n$ such that $t_1 =(1; n_1,
n_2)$, $\, t_2 = (1; n_3, n_4)$. Note that,
\begin{displaymath}
  [n_2,n_3;n_4,n_1] \sqsubseteq \Gamma(w_1),\
  [n_2, n_1;n_4, n_3] \sqsubseteq \Gamma(w_2)
\end{displaymath}
where $\Gamma$ is an \C\ of $T/T_{i+1}$. Since $t_1 \neq t_2$ and
$\Gamma(w_1)$ restricted to $\{ 1, \ldots, n\} \setminus
\Gamma(w_1)^{-1}(\theta)$ is injective, we obtain $n_1\neq n_3$. It
follows that the cycle $(n_1,n_3)$ is contained in
$\Gamma(w_2^{-1}w_1)$. This contradicts the assumption that $T$ is
aperiodic. Hence, $\mathsf{Inv}\cap\mathsf{A}$ is contained in
$\mathsf{MN}$. The semigroup $N_4$ is in the subclass
$\mathsf{MN}\cap\mathsf{A}\setminus
\langle\mathsf{Inv}\cap\mathsf{A}\rangle$. Therefore,
$\langle\mathsf{Inv}\cap\mathsf{A}\rangle$ is strictly contained in
$\mathsf{MN}\cap\mathsf{A}$.

A finite aperiodic semigroup $S$ is in $\mathsf{TM}$ (respectively
$\mathsf{PE}$) if and only if the principal factors of $S$ are either
null semigroups or inverse semigroups \cite[Corollary 10 and
8]{Jes-Ril}. Therefore, we have
$\mathsf{PE}\cap\mathsf{A}=\mathsf{TM}\cap\mathsf{A}=\mathsf{BG}\cap\mathsf{A}$.

Since $\mathsf{MN}\subsetneqq\mathsf{MN}^{(3)}\subsetneqq\mathsf{MN}^{(2)}$, $\mathsf{MN}^{(3)}\subsetneqq \mathsf{\M}$ and the semigroups used earlier to distinguish the corresponding pseudovarieties are all aperiodic, we have $\mathsf{MN}\cap\mathsf{A}\subsetneqq\mathsf{MN}^{(3)}\cap\mathsf{A}\subsetneqq\mathsf{MN}^{(2)}\cap\mathsf{A}$ and $\mathsf{MN}^{(3)}\cap\mathsf{A}\subsetneqq\mathsf{\M}\cap\mathsf{A}$.

With the help of a computer (see the appendix), one can check that the
semigroup $N_1$ satisfies the identity
$\lambda_3(x,y,1,w_1,w_2)=\rho_3(x,y,1,w_1,w_2)$, for $x,y\in N_1$ and
$w_1,w_2\in N_1^1$. Hence, we have $N_1 \in \mathsf{NT}$. The
semigroup $N_1$ is aperiodic, but it is not in $\mathsf{MN}$, the
semigroup $N_3$ is aperiodic and in $\mathsf{\M}$ but it is not in
$\mathsf{NT}$, and the semigroup $F_{12}$ is aperiodic and in
$\mathsf{PE}$ but is not in $\mathsf{\M}$. Hence,
$\mathsf{MN}\cap\mathsf{A}$ is strictly contained in
$\mathsf{NT}\cap\mathsf{A}$, $\mathsf{NT}\cap\mathsf{A}$ is strictly
contained in $\mathsf{\M}\cap\mathsf{A}$, and
$\mathsf{\M}\cap\mathsf{A}$ is strictly contained in
$\mathsf{PE}\cap\mathsf{A}$.
\end{proof}

\begin{opm}  
Are the pseudovarieties $\mathsf{MN}^{(2)}$ and $\mathsf{MN}^{(3)}$ finitely based?
\end{opm} 

%%%%%%%%%%%%%%%%%%%%%%%%%%%%%%%%%%%%%%%%%%%%%%%%%%%%%%%%%%%%%%%%%%

\textit{Acknowledgments.}
%The authors thank the referee for many helpful comments. 
The authors were partially supported by CMUP, which is financed by
national funds through FCT -- Fundação para a Ciência e a Tecnologia,
I.P., under the project UIDB/00144/2020. The second author also
acknowledges FCT support through a contract based on the “Lei do
Emprego Científico” (DL 57/2016).
%
%This work was supported, in part, by CMUP (UID/MAT/00144/ 2019),
%which is funded by FCT (Portugal) with national (MCTES) and
%European structural funds through the programs FEDER, under the
%partnership agreement PT2020. 

%%%%%%%%%%%%%%%%%%%%%%%%%%%%%%%%%%%%%%%%%%%%%%%%%%%%%
\bibliographystyle{acm}
\bibliography{ref-MN}

\section*{Appendix}

For the calculations in GAP mentioned in the paper, we start by
introducing a couple of functions which serve to test in a
transformation semigroup $S$ with zero, whether or not the equality
\begin{displaymath}
  \lambda_n(x,y,w_1,w_2,\ldots,w_n)=\rho_n(x,y,w_1,w_2,\ldots,w_n)
\end{displaymath}
holds, where $x,y\in S$ are arbitrary, and each $w_i$ satisfies the
following conditions determined by a \emph{directive vector}
$v=(v_1,\ldots,v_n)$:
\begin{itemize}
\item $w_i=1$ if $v_i=1$;
\item $w_i\in S$ is arbitrary if $v_i=\text{"+"}$
\item $w_i\in S^1$ is arbitrary if $v_i=\text{"*"}$.
\end{itemize}
There is a global control variable ``res'' (which needs to be
initialized) whose value is either true if there is no counterexample
to the identity, or the first counterexample to be found otherwise.
The function ``Malcev'' does the calculation for a given pair $x,y$,
calling itself recursively. The function ``check'' calls ``Malcev''
for all possible pairs $x,y$.

\newpage

\begin{lstlisting}[language=GAP]
res := true;
universe := function(c,S)
    if c = 1 then
        return [IdentityTransformation];
    elif c = "+" then
        return S;
    else
        return Union(S,[IdentityTransformation]);
    fi;
end;
Malcev := function(i,x,y,v,S,zero,ce)
    local z;
    if res <> true then
        return;
    fi;
    if i <= Size(v) then
        if x <> zero and y <> zero and x <> y then
            for z in universe(v[i],S) do
                Malcev(i+1,x*z*y,y*z*x,v,S,zero,Concatenation(ce,[z]));
            od;
        fi;
    else
        if x <> y then
            res := ce;
        fi;
    fi;
end;
check := function(v,S,zero)
    local x, y;
    for x in S do
        for y in S do
            Malcev(1,x,y,v,S,zero,[x,y]);
        od;
    od;
    return res;
end;
\end{lstlisting}

\newpage

\noindent
1. Testing the identity
\begin{displaymath}
  xyyx\,z\,yxxy=yxxy\,z\,xyyx
\end{displaymath}
in the semigroup $F_{12}$, where $z$ is allowed to take the value 1:
\begin{lstlisting}[language=GAP]
a := Transformation([1,4,2,4]);; 
b := Transformation([2,4,1,4]);;
c := Transformation([4,3,4,4]);;
F12 := Semigroup(a,b,c);;
zero := Transformation([4,4,4,4]);;
check([1,1,"*"],F12,zero);
\end{lstlisting}

\bigskip

\noindent
2. Determining the triples $(x,y,z)\in N\times N\times N^1$ such that
$xy\,z\,yx\ne0$ and $yx\,z\,xy\ne0$:
\begin{lstlisting}[language=GAP]
a := Transformation([2,7,4,7,7,7,7]);; 
b := Transformation([7,7,7,5,7,1,7]);;
c := Transformation([7,3,7,7,6,7,7]);;
d := Transformation([7,6,7,7,3,7,7]);; 
N := Semigroup(a,b,c,d);;
zero := Transformation([7,7,7,7,7,7,7]);;
for x in N do
    for y in N do
        xy := x*y;
        yx := y*x;
        for z in Union(N,[One(a)]) do
            if xy*z*yx <> zero and yx*z*xy <> zero then
                Print("x=",x," y=",y," z=",z,"\n");
            fi;
        od;
    od;
od;
\end{lstlisting}

\bigskip

\noindent
3. Testing the identity
\begin{displaymath}
  xyzyx\,t\,yxzxy = yxzxy\,t\,xyzyx
\end{displaymath}
in the semigroup $N_1$, where $z$ and $t$ are allowed to take the value 1:
\begin{lstlisting}[language=GAP]
a := Transformation([3,4,5,5,5]);; 
b := Transformation([4,3,5,5,5]);;
c := Transformation([5,5,1,5,5]);; 
d := Transformation([5,5,5,2,5]);;
N1 := Semigroup(a,b,c,d);;
zero := Transformation([5,5,5,5,5]);;
check([1,"*","*"],N1,zero);
\end{lstlisting}

%\newpage

\bigskip

\noindent
4. Testing the identity
\begin{displaymath}
  xyzyx\,t\,yxzxy\;s\;yxzxy\,t\,xyzyx
  =yxzxy\,t\,xyzyx\;s\;xyzyx\,t\,yxzxy
\end{displaymath}
in the semigroup $N_2$, where $z,t$ and $s$ are allowed to take the value 1:
\begin{lstlisting}[language=GAP]
a := Transformation([5,6,7,11,11,1,11,2,4,11,11]);;
b := Transformation([8,11,9,10,4,11,2,11,11,3,11]);;
N2 := Semigroup(a,b);;
zero := Transformation([11,11,11,11,11,11,11,11,11,11,11]);;
check([1,"*","*","*"],N2,zero);
\end{lstlisting}

\bigskip

\noindent
5. Testing the identity
\begin{displaymath}
xtztx\,t\,txzxt = txzxt\,t\,xtztx
\end{displaymath}
in the semigroup $N_3$, where $z$ and $t$ are allowed to take the
value 1 (although, for the latter, the value 1 always verifies the
identity), does not fit in our general recursive scheme so we use an
ad hoc program:
\begin{lstlisting}[language=GAP]
a := Transformation([3,4,5,5,5]);; 
b := Transformation([4,3,5,5,5]);;
c := Transformation([5,5,1,5,5]);; 
d := Transformation([5,5,5,2,5]);;
e := Transformation([5,1,5,3,5]);;
N3 := Semigroup(a,b,c,d,e);;
zero := Transformation([5,5,5,5,5]);;
control := true;;
for x in N3 do
    for t in N3 do
        xt := x*t;
        tx := t*x;
        if xt <> zero and tx <> zero and xt <> tx then
            for z in Union(N3,[One(a)]) do
                xtztx := xt*z*tx;
                txzxt := tx*z*xt;
                if xtztx*t*txzxt <> txzxt*t*xtztx then
                    Print("the identity fails with:\n");
                    Print("x=",x," z=",z," t=",t,"\n");
                    control := false;
                    break;
                fi;
            od;
        fi;
    od;
od;
if control = true then
   Print("\nthe identity holds\n");
fi;
\end{lstlisting}

\end{document}